\theoremstyle{plain}
\newtheorem{thm}{Theorem}[section]
\newtheorem*{thm*}{Theorem}
\newtheorem{prop}[thm]{Proposition}
\newtheorem{lem}[thm]{Lemma}
\newtheorem*{lem*}{Lemma}
\newtheorem{cor}[thm]{Corollary}
\theoremstyle{remark}
\newtheorem*{remark}{Remark}
\theoremstyle{definition}
\newtheorem{dfn}[thm]{Definition}
\DeclareMathOperator{\Br}{Br}
\DeclareMathOperator{\Pic}{Pic}
\DeclareMathOperator{\Hom}{Hom}
\DeclareMathOperator{\Gal}{Gal}
\DeclareMathOperator{\inv}{inv}
\DeclareMathOperator{\dP5}{dP_5}
\DeclareMathOperator{\Spec}{Spec}
\DeclareMathOperator{\Norm}{Nm}
\newcommand{\opp}{^{\textup{opp}}}
\newcommand{\sep}{^{\textup{sep}}}
\newcommand{\kbar}{\bar{k}}
\newcommand{\Xbar}{\bar{X}}
\newcommand{\Ubar}{\bar{U}}
\renewcommand{\H}{\mathrm{H}}
\newcommand{\Gm}{\mathbb{G}_m}
\newcommand{\tors}{_{\textup{tors}}}
\renewcommand{\div}{\mathrm{div}}
\newcommand{\BrAz}{\Br_{\textup{Az}}}
\renewcommand{\Im}{\mathrm{Im}}
\begin{document}

\title[Order $5$ obstructions to the integral Hasse principle]{Order $5$ Brauer--Manin obstructions to the integral Hasse principle on log K3 surfaces}

\author{Julian Lyczak}
\address{IST Austria\\ Am Campus 1\\ 3400 Klosterneuburg\\ Austria}
\email{jlyczak@ist.ac.at}

\begin{abstract}
We construct families of log K3 surfaces and study the arithmetic of their members. We use this to produce explicit surfaces with an order $5$ Brauer--Manin obstruction to the integral Hasse principle.
\end{abstract}

\maketitle

\section*{Introduction}

The goal of this paper is to add to the study of integral points on ample \textit{log K3 surfaces} as started by Harpaz \cite{Harpaz}. This is done by first giving a geometrically flavoured construction for such equations. One upshot of this construction is that one even gets a family of such surfaces for which the arithmetic properties of the members can be studied simultaneously. We will present two families of log K3 surfaces for which a positive proportion of the fibres fails the \textit{Hasse principle}, i.e.~it is everywhere locally soluble but it does not admit integral points.

For the geometrically similar K3 surfaces it has been conjectured by Skorobogatov that the \textit{Brauer--Manin obstruction} is the only one to the Hasse principle \cite{SkorobogatovDiagQuarSurf}. This is however not the case for log K3 surface \cite{Harpaz}. The examples in this paper exhibit new arithmetic behaviour and it is hoped that the accompanying ideas for studying log K3 surfaces will contribute to a workable conjecture for integral points on log K3 surfaces.

\subsection*{Main results} 

We will use the \textit{integral Brauer--Manin obstruction} as introduced by Colliot-Th\'el\`ene and Xu \cite{CTXu} to prove the failure of the integral Hasse principle, which is based on the Brauer--Manin obstruction by Manin \cite{ManinICM}. Let $\mathcal U/\mathbb Z$ be a model of a variety $U/\mathbb Q$ for which we want to prove that $\mathcal U(\mathbb Z)=\emptyset$. The technique uses an element $\mathcal A \in \Br U := \H^2(U,\Gm)$ to define an intermediate set
\[
\mathcal U(\mathbb Z) \subseteq \mathcal U(\mathbb A_{\mathbb Q, \infty})^{\mathcal A} \subseteq \mathcal U(\mathbb A_{\mathbb Q, \infty})
\]
in the inclusion of integral points of $\mathcal U$ in its set of integral adelic points, i.e.~$\mathbb A_{\mathbb Q,\infty}=\mathbb R \times \prod_p \mathbb Z_p$. Hence if $\mathcal U(\mathbb A_{\mathbb Q, \infty})$ is non-empty but $\mathcal U(\mathbb A_{\mathbb Q, \infty})^{\mathcal A}$ is empty, then $\mathcal A$ \textit{obstructs the integral Hasse principle} on $\mathcal U$. The \textit{order of the obstruction} is the order of $\mathcal A$ in $\Br U/\Br \mathbb Q$. We will be mainly interested in obstructions coming from elements in the \textit{algebraic Brauer group} $\Br_1 U := \ker\left( \Br U \to \Br \Ubar \right) \subseteq \Br U$.\\

In this paper we have restricted to a specific type of log K3 surface to showcase our ideas. In passing we pick up the first Brauer--Manin obstructions of higher order. The existence of a high order element in the Brauer group of our log K3 surfaces depends on the splitting field of a related del Pezzo surface. Recall that the splitting field of a del Pezzo surface is the minimal field over which all its $-1$-curves are defined.

\begin{thm}[Theorem~\ref{thm:algbrgrplogdp5}]
Let $U=X\backslash C$ be a log K3 surface over $\mathbb Q$ with $X$ a del Pezzo surface of degree $5$ and $C$ a geometrically irreducible anticanonical divisor.
We have
\[
\Br_1 U /\Br \mathbb Q \cong \begin{cases}
	\mathbb Z/5\mathbb Z & \mbox{ if the splitting field of $X$ is a cyclic extension $K/\mathbb Q$ of degree $5$;}\\ 
	0 & \mbox{ otherwise.}
\end{cases}
\]
Also, each cyclic extension $K/\mathbb Q$ of degree $5$ is the splitting field of a del Pezzo surface over $\mathbb Q$. Such a surface is unique up to isomorphism.
\end{thm}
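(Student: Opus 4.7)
The plan is to derive the algebraic Brauer group from the geometric Picard module via the Hochschild--Serre spectral sequence, identify $\Pic \bar U$ as a familiar $S_5$-module, and run a short case check on subgroups of $S_5$; the existence and uniqueness claim will come from the classical correspondence between del Pezzo surfaces of degree $5$ and degree $5$ étale algebras.

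First, since $\bar C$ is irreducible and $[\bar C] = -K_X$ has infinite order in $\Pic \bar X$, any unit on $\bar U$ extends to a rational function on $\bar X$ whose divisor must be a multiple of $\bar C$ and is hence constant; so $\H^0(\bar U, \Gm) = \bar{\mathbb Q}^\times$. Combined with the vanishing of $\H^3(G_{\mathbb Q}, \bar{\mathbb Q}^\times)$ (the only potential contribution is from the real place, and $\H^3(\Gal(\mathbb C/\mathbb R), \mathbb C^\times) = 0$), the low-degree terms of the Hochschild--Serre spectral sequence collapse to
\[
\Br_1 U / \Br \mathbb Q \;\cong\; \H^1(G_{\mathbb Q}, \Pic \bar U).
\]
To describe $\Pic \bar U$ as a Galois module, I would use the five classes $F_1, \dots, F_5$ of pencils of conics on $\bar X$, which satisfy $F_i \cdot F_j = 1 - \delta_{ij}$ and $\sum_i F_i = -2 K_X$. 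A direct computation in an explicit basis of $\Pic \bar X / \mathbb Z \cdot K_X$ shows that their images $\bar F_i$ generate $\Pic \bar U$ with the single relation $\sum_i \bar F_i = 0$, giving
\[
\Pic \bar U \;\cong\; \mathbb Z^5 / \mathbb Z \cdot (1,1,1,1,1)
\]
as Galois modules, with $G_{\mathbb Q}$ permuting coordinates through its image $\Gamma \subseteq S_5 = W(A_4)$. By construction $\Gamma$ is the Galois group of the splitting field of $X$.

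For the cohomology, the short exact sequence $0 \to \mathbb Z \to \mathbb Z^5 \to \Pic \bar U \to 0$ (with $\mathbb Z$ diagonally embedded), Shapiro's lemma $\H^i(\Gamma, \mathbb Z^5) = \bigoplus_j \H^i(\Gamma_j, \mathbb Z)$ (the sum over $\Gamma$-orbits of $\{1,\dots,5\}$, with $\Gamma_j$ a chosen point-stabilizer), and $\H^1(\Gamma, \mathbb Z) = 0$ give
\[
\H^1(\Gamma, \Pic \bar U) \;\cong\; \ker\bigl(\Hom(\Gamma, \mathbb Q/\mathbb Z) \to \bigoplus_j \Hom(\Gamma_j, \mathbb Q/\mathbb Z)\bigr) \;\cong\; \Hom(\Gamma/N, \mathbb Q/\mathbb Z),
\]
where $N \trianglelefteq \Gamma$ is generated by all point-stabilizers. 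It then suffices to determine when $N \neq \Gamma$: if $\Gamma$ has a fixed point this fails trivially, and for $\Gamma$ intransitive with orbit sizes $\{2,3\}$ an explicit check on the few possible subgroups of $S_2 \times S_3$ gives $N = \Gamma$; for $\Gamma$ transitive (hence containing a $5$-cycle) the only possibilities up to conjugacy are $C_5, D_5, F_{20}, A_5, S_5$, and a line-by-line check (for instance, $A_5$ is simple and $N \supseteq \Gamma_j \neq 1$) gives $N = \Gamma$ for all except $C_5$, where all stabilizers are trivial and one obtains $\H^1 \cong \mathbb Z/5\mathbb Z$. This final case is exactly that of the splitting field of $X$ being a cyclic extension of degree $5$.

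For the second assertion, I would invoke the classical fact that degree $5$ del Pezzo surfaces are all isomorphic over $\bar{\mathbb Q}$ with geometric automorphism group $S_5$, so their isomorphism classes over $\mathbb Q$ are in bijection with $\H^1(G_{\mathbb Q}, S_5)$ and in turn with isomorphism classes of degree $5$ étale $\mathbb Q$-algebras: to a $\dP5$ $X$ one associates the étale algebra encoding the Galois action on its five pencils of conics, whose Galois closure is the splitting field of $X$. A cyclic degree $5$ extension $K/\mathbb Q$ is itself a degree $5$ étale algebra (a field) and equals its own Galois closure, so there is exactly one $\dP5$ over $\mathbb Q$ (up to isomorphism) with splitting field $K$. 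The main technical burden lies in the subgroup case analysis of the previous paragraph, but the subgroup lattice of $S_5$ is short and explicit enough that each case is settled quickly.
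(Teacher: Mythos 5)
Your proof is correct, and the first (and main) part genuinely differs from the paper's argument. The paper cites a remark in Bright--Lyczak for the vanishing of $\Br_1 U/\Br \mathbb Q$ in all but the cyclic quintic case, and then handles that one case by an explicit matrix computation: it fixes the basis $[L_0],[L_1],[L_2],[L_3]$ of $\Pic \Ubar \cong \mathbb Z^4$ coming from the $-1$-curve classes, writes out the $4\times 4$ matrix of a generator $\sigma$ of $\Gal(K/\mathbb Q)$, and evaluates $\ker(1+\sigma+\cdots+\sigma^4)/\Im(1-\sigma)$ directly. You instead describe $\Pic \Ubar$ intrinsically as the $S_5$-module $\mathbb Z^5/\mathbb Z\cdot(1,\dots,1)$ via the five conic-bundle classes $F_i$ (correct: $F_i\cdot F_j = 1-\delta_{ij}$, $\sum F_i = -2K_X$, and a short lattice computation confirms this is the only relation after killing $-K_X$), then use the sequence $0\to\mathbb Z\to\mathbb Z^5\to\Pic\Ubar\to 0$, Shapiro's lemma and $\H^2(\Gamma,\mathbb Z)\cong\Hom(\Gamma,\mathbb Q/\mathbb Z)$ to obtain the clean formula $\H^1(\Gamma,\Pic\Ubar)\cong\Hom(\Gamma/N,\mathbb Q/\mathbb Z)$ with $N$ the (normal) subgroup generated by point-stabilizers. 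The resulting case check over subgroups $\Gamma\subseteq S_5$ is short and fully self-contained: any fixed point forces $N=\Gamma$ immediately; for orbit type $\{2,3\}$ the two stabilizers have coprime indices $2$ and $3$; and among the transitive subgroups $C_5,D_5,F_{20},A_5,S_5$ the stabilizers are trivial only for $C_5$. What this buys is a unified, conceptual replacement for both the external citation and the ad hoc matrix computation; what the paper's route buys is a shorter proof modulo the cited result, and an explicit cocycle picture that dovetails with the later construction of a cyclic algebra generator (Lemma~\ref{lem:genofalgbrgrp}). For the second assertion both you and the paper rely on the same input (Proposition~\ref{prop:bijectionisomdp5s}, i.e.\ the bijection between $\dP5$'s over $\mathbb Q$ and conjugacy classes of $G_{\mathbb Q}\to S_5$, equivalently degree-$5$ \'etale algebras), so there is no real difference there.
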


We will consider del Pezzo surfaces with a non-trivial algebraic Brauer group and our first explicit example comes from the quintic extension $\mathbb Q(\zeta_{11}+\zeta^{-1}_{11})/\mathbb Q$. Consider the projective scheme~$\mathcal X \subseteq \mathbb P^5_{\mathbb Z}$ given by the five quadratic forms
\begin{multline*}
u_0u_3+22u_0u_4+121u_0u_5-u_1^2-121u_1u_3+2662u_1u_4-36355u_2u_4-\\
9306u_2u_5+10494u_3u_4-242u_3u_5-215501u_4^2+68123u_4u_5-13794u_5^2,
\end{multline*}\\[-1.2cm]
\begin{multline*}
u_0u_4+11u_0u_5-u_1u_2-11u_1u_3+242u_1u_4-3223u_2u_4-847u_2u_5+\\
902u_3u_4-11u_3u_5-19272u_4^2+6413u_4u_5-1331u_5^2,
\end{multline*}\\[-1.2cm]
\begin{multline*}
u_0u_5-u_1u_3+22u_1u_4-u_2^2-286u_2u_4-77u_2u_5+77u_3u_4-
1694u_4^2+572u_4u_5-121u_5^2,
\end{multline*}\\[-1.2cm]
\begin{multline*}
u_1u_4-u_2u_3-11u_2u_4-77u_4^2+55u_4u_5-11u_5^2,\\
\end{multline*}\\[-1.7cm]
\begin{multline*}
u_1u_5-u_2u_4-11u_2u_5-u_3^2+11u_3u_4-44u_4^2.\\
\end{multline*}\\[-0.8cm]
\noindent This scheme is constructed and studied in Section~3. In Section~4 we prove the arithmetic properties stated in the following theorem.

\begin{thm}
For each geometrically irreducible hyperplane section $\mathcal C_h := \{h=0\} \cap \mathcal X$ we define~$\mathcal U_h =\mathcal X\backslash \mathcal C_h$.
\begin{enumerate}
\item The scheme $\mathcal X/\mathbb Z$ is a flat proper model of the del Pezzo surface over $\mathbb Q$ which splits over the quintic extension $\mathbb Q(\zeta_{11}+\zeta^{-1}_{11})$.
\item The existence of an algebraic Brauer--Manin obstruction to the integral Hasse principle on $\mathcal U_h$ only depends on the reduction of $h$ modulo $11$.
\item There exists an $h$, and hence even a residue class $h \bmod 11$, for which $\mathcal U_h$ has an order $5$ obstruction to the integral Hasse principle.
\end{enumerate}
\end{thm}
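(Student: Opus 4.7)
For part (a), the scheme $\mathcal{X}$ is projective over $\mathbb{Z}$ by construction, so properness is automatic; the content is flatness together with the identification of the generic fibre. For flatness I would check that every closed fibre $\mathcal{X}_p$ has the same Hilbert polynomial as $X := \mathcal{X}_{\mathbb{Q}}$, which over a Dedekind base is enough. To identify $X$ with the degree $5$ del Pezzo surface attached to $K := \mathbb{Q}(\zeta_{11}+\zeta_{11}^{-1})$, I would use the explicit construction of $\mathcal{X}$ in Section~3 to exhibit the ten $(-1)$-curves on $\Xbar$ together with a $\Gal(\bar{\mathbb{Q}}/\mathbb{Q})$-action factoring through $\Gal(K/\mathbb{Q})\cong\mathbb{Z}/5\mathbb{Z}$, and then invoke the uniqueness clause of Theorem~\ref{thm:algbrgrplogdp5}.

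For part (b), fix a generator $\mathcal{A}$ of $\Br_1 U_h/\Br\mathbb{Q} \cong \mathbb{Z}/5\mathbb{Z}$ and analyse the adelic sum $\sum_v \inv_v \mathcal{A}(P_v)$. I plan to show that only $v = 11$ can contribute. At $v = \infty$ the order of $\mathcal{A}$ divides $5$ whereas $\Br\mathbb{R}$ is $2$-torsion, so $\inv_\infty \mathcal{A}$ vanishes. At finite $v \neq 11$ the extension $K/\mathbb{Q}$ is unramified, and $\mathcal{A}$ can be written as a cyclic algebra $(K/\mathbb{Q}, f)$ built from an unramified character; using the residue sequence for the Brauer group of a regular model of $\mathcal{U}_h$ over $\mathbb{Z}_v$, the invariant on $\mathcal{U}_h(\mathbb{Z}_v)$ factors through $\Br \mathbb{F}_v = 0$, provided $\mathcal{A}$ has no residues along the vertical components of $\mathcal{X}\otimes\mathbb{F}_v$ or along $\mathcal{C}_h \otimes \mathbb{F}_v$. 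Once only $v=11$ matters, the map $\inv_{11}\mathcal{A} \colon \mathcal{U}_h(\mathbb{Z}_{11}) \to \tfrac{1}{5}\mathbb{Z}/\mathbb{Z}$ is continuous with finite image and so factors through the finite set $\mathcal{U}_h(\mathbb{F}_{11})$; since $\mathcal{X}\otimes\mathbb{F}_{11}$ is independent of $h$ and $\mathcal{C}_h \otimes \mathbb{F}_{11}$ depends only on $h \bmod 11$, the obstruction depends only on $h \bmod 11$.

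For part (c), the previous paragraph reduces the question to at most $11$ residue classes. For each candidate I would first exhibit an integral adelic point on $\mathcal{U}_h$, and then evaluate $\inv_{11}\mathcal{A}$ on a complete set of representatives of $\mathcal{U}_h(\mathbb{F}_{11})$, for instance via norm residue symbols applied to $f$. Any class on which this invariant is non-zero at every representative supplies the required $h$.

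The technical core is the residue analysis in (b): verifying that the algebraic Brauer class has no vertical residues at primes other than $11$ requires detailed control of the integral models of both $\mathcal{X}$ and $\mathcal{C}_h$ at every such prime. Once that purity statement is in place, part~(c) reduces to a concrete finite computation over $\mathbb{F}_{11}$.
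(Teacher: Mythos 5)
Your part (a) is fine in outline, though the paper takes a cleaner route (the Gr\"obner basis computed over $\mathbb{Q}$ has monic leading coefficients, which by a result on Gr\"obner bases over PIDs shows $\mathcal{X}$ is the flat closure of $X$ in $\mathbb{P}^5_\mathbb{Z}$, avoiding any Hilbert-polynomial check); both work.

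The central gap is in part (b). You claim that once only $v=11$ contributes, the invariant map $\inv_{11}\mathcal{A}\colon\mathcal{U}_h(\mathbb{Z}_{11})\to\tfrac{1}{5}\mathbb{Z}/\mathbb{Z}$ is ``continuous with finite image and so factors through the finite set $\mathcal{U}_h(\mathbb{F}_{11})$.'' This inference is false: continuity with finite image only gives factorization through $\mathcal{U}_h(\mathbb{Z}/11^n\mathbb{Z})$ for \emph{some} $n$, not $n=1$. The paper's Section~5 illustrates exactly this failure: for the wildly ramified prime $5$, the invariant map factors through $\mathcal{U}_h(\mathbb{Z}/25\mathbb{Z})$ but not through $\mathcal{U}_h(\mathbb{F}_5)$, which is why that theorem's statement depends on $h\bmod 25$. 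The reason $n=1$ works at $11$ is that $11$ is tamely ramified in $K=\mathbb{Q}(\zeta_{11}+\zeta_{11}^{-1})$: the algebra is the cyclic algebra $(l_1/h,\,K/\mathbb{Q},\,\sigma)$, $l_1/h$ takes unit values on $\mathcal{U}_h(\mathbb{Z}_{11})$, and for a tamely ramified degree-$5$ local extension the norm class of a unit is determined by its reduction mod $11$. Your argument skips this essential tameness computation, so part~(b) is not actually established.

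Secondary issues: for $v\neq11$ you invoke a purity/residue argument but do not verify the ``no vertical residues'' hypothesis, whereas the paper's argument is elementary and avoids this --- for split primes $K_{\mathfrak{l}}=\mathbb{Q}_\ell$ kills the class outright, and for inert primes the fibre $\mathcal{X}_\ell$ is an interesting del Pezzo over $\mathbb{F}_\ell$ on which $l_1=0$ has no $\mathbb{F}_\ell$-points, forcing $l_1/h(P)\in\mathbb{Z}_\ell^\times$, and units are norms in an unramified extension. Also, in part~(c) the claim that one reduces to ``at most $11$ residue classes'' is wrong: $h$ is a linear form in six variables, so there are $11^6-1$ nonzero reductions modulo $11$ to consider, and the paper finds exactly $228$ of them give an obstruction.
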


The same construction can be used to produce many more examples. The arithmetic behaviour is mainly determined by the primes which are ramified in the splitting field $K$. Any tamely ramified prime can be studied in a similar matter. For completeness we also add an example in Section~5 involving the wildly ramified prime $5$.

\begin{thm}
There exists a scheme~$\mathcal X \subseteq \mathbb P^5_{\mathbb Z}$ with the following properties.
\begin{enumerate}
\item The scheme $\mathcal X$ is a flat model for the del Pezzo surface $X=\mathcal X_{\mathbb Q}$ over $\mathbb Q$ which splits over the unique quintic number field $K \subseteq \mathbb Q(\zeta_5)$.
\item The existence of an algebraic Brauer--Manin obstruction on $\mathcal U_h := \mathcal X\backslash \{h=0\}$ to the integral Hasse principle only depends on the reduction of $h$ modulo $25$.
\item There exists an $h$, and hence even a residue class $h \bmod 25$, for which $\mathcal U_h$ has an order $5$ obstruction to the integral Hasse principle.
\end{enumerate}
\end{thm}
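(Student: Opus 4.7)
The strategy is to mirror the construction and analysis carried out in Sections~3 and~4 for the $\zeta_{11}$ example, but with the cyclic quintic subfield $K$ of $\mathbb Q(\zeta_{25})$ in place of $\mathbb Q(\zeta_{11}+\zeta_{11}^{-1})$. For part~(a) I would invoke the existence and uniqueness statement in Theorem~\ref{thm:algbrgrplogdp5} to obtain the unique del Pezzo surface $X/\mathbb Q$ of degree~$5$ whose splitting field is~$K$, and then re-run the explicit geometric construction of Section~3 for this choice of~$K$: descend the configuration of $(-1)$-curves using the $\Gal(K/\mathbb Q)$-action on $\Pic\Xbar$, choose an anticanonical basis, and read off five quadratic forms cutting out a projective model in $\mathbb P^5$. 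Clearing denominators produces $\mathcal X\subseteq \mathbb P^5_{\mathbb Z}$, which is $\mathbb Z$-flat by construction with generic fibre~$X$.

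For part~(b), following the template of Section~4, a generator of $\Br_1 U/\Br\mathbb Q$ can be written as a cyclic algebra attached to a character of $\Gal(K/\mathbb Q)$, so the local invariant at $\mathbb Q_p$ is governed by the local norm residue symbol for $K\otimes_{\mathbb Q}\mathbb Q_p$. Since $K/\mathbb Q$ is unramified outside~$5$, the invariants at all primes $p\ne 5$ vanish on local units, and the obstruction depends on $h$ only through its image at the prime~$5$. The main content of~(b) is therefore to pin down the precise modulus at~$5$.

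The hard part is the wild ramification at $p=5$. Unlike the tame situation at $p=11$, where $\mathbb Z_{11}^\times/(\mathbb Z_{11}^\times)^5$ is already visible modulo~$11$, one must now go up in the higher unit filtration of $\mathbb Q_5^\times$. The conductor of $K/\mathbb Q$ equals~$25$, so $\Norm_{K_5/\mathbb Q_5}\mathcal O_{K_5}^\times$ contains $1+25\mathbb Z_5$ but not $1+5\mathbb Z_5$; the local invariant therefore factors through $(\mathbb Z/25\mathbb Z)^\times$ and not through $(\mathbb Z/5\mathbb Z)^\times$. Combined with the vanishing of invariants at the other places this establishes that the obstruction is determined by $h\bmod 25$. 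Turning this local class field theory calculation into an effective statement on local evaluations of $\mathcal A$ at points of $\mathcal U_h(\mathbb Z_5)$ is the place where I expect the bookkeeping to be most delicate.

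Finally for~(c) I would argue exactly as in the preceding theorem: by~(b) it suffices to sift through the finitely many residue classes modulo~$25$, verify everywhere local solubility at the finitely many bad places and at~$\infty$ by a pointwise search, and evaluate the local invariants to exhibit one class for which $\sum_v \inv_v$ is nonzero in $\mathbb Q/\mathbb Z$.
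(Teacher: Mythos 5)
Your strategy matches the paper's in outline, and the conceptual core of your argument for part~(b) is correct: the reason the obstruction depends on $h$ modulo $25$ rather than modulo $5$ is indeed that $K_5/\mathbb Q_5$ has conductor $25$, so that the norm subgroup $\Norm_{K_5/\mathbb Q_5}(\mathcal O_{K_5}^\times) = (\mathbb Z_5^\times)^5 \cdot (1+25\mathbb Z_5)$ contains $1+25\mathbb Z_5$ but not $1+5\mathbb Z_5$. The paper packages this same local class field theory fact as the elementary observation that a $5$-adic unit is a fifth power if and only if it is one modulo $25$. The vanishing of invariants at $\ell\ne 5$ is also handled the same way: the field $K$ is unramified outside $5$ (even though the chosen $\alpha$ has discriminant divisible by $7$, the prime $7$ splits completely in $K$ and so contributes nothing).

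Where you stop short is exactly where the paper does real work: turning ``the invariant is a function of $\tfrac{l_1}{h}(P) \bmod 25$'' into a finite, tractable computation. The naive reading of your proposal requires enumerating $\mathcal X(\mathbb Z/25\mathbb Z)$ and, for each $h\bmod 25$, sweeping over all points to compute the image of $\inv_5$; this is finite but unwieldy, and also requires knowing that points of $\mathcal X(\mathbb Z/25\mathbb Z)$ on the smooth locus lift to $\mathcal X(\mathbb Z_5)$, which you do not address. The paper's actual proof introduces two devices that cut the computation down to modulo $5$: (i) a lemma describing the $25$ lifts of a smooth $\mathbb F_5$-point $\bar P\in(\mathcal U_h\setminus L)(\mathbb F_5)$ to $\mathcal X(\mathbb Z/25\mathbb Z)$ as $\vec x + 5\vec w$ with $\vec w$ in (a translate of) the tangent space $T_{\bar P}$, from which one deduces that $\tfrac{l_1}{h}$ takes either $1$ or $5$ distinct values in $(\mathbb Z/25\mathbb Z)^\times$ on these lifts; and (ii) Proposition~\ref{prop:surjinvmapmodulo5}, which uses (i) to show that $\inv_5\mathcal A$ is already surjective whenever $h$ is not a scalar multiple of $l_1$ modulo~$5$. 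Only the residual case $h \equiv \lambda u_0 + 5(\cdots) \bmod 25$ then needs finer analysis, and in that case $\tfrac{h}{l_1} \bmod 25$ depends only on the reduction of $P$ modulo~$5$, so the computation really is over $\mathcal X(\mathbb F_5)$. These geometric reductions, together with the explicit isomorphism $\mathcal X_5\setminus L \cong \mathbb A^2_{\mathbb F_5}$ coming from the $A_4$-singularity structure of $\mathcal X_5$, are what you would need to supply to complete the ``delicate bookkeeping'' you flag; they are not consequences of the class-field-theoretic observation alone, but of the explicit geometry of the special fibre.
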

Let us put these results in context.

\subsection*{Integral points on log K3 surfaces} Integral points on log K3 are believed to behave to a certain degree in a similar way as rational points on K3 surfaces. For those surfaces it has been conjectured by Skorobogatov \cite{SkorobogatovDiagQuarSurf} that the existence of solutions are completely controlled by the Brauer--Manin obstruction. However, results by Ieronymou and Skorobogatov \cite{IeronymouSkorobogatovOddOrder} and Skorobogatov and Zarhin \cite{SkorobogatovZarhinKummerVarietiesBrauerGroups} say that there can not be an odd order obstruction to the Hasse principle for smooth diagonal quartic surfaces and Kummer varieties. An algebraic obstruction of order $3$ on a K3 surface was found in \cite{CornNakahara} and Berg and V\'arilly-Alvarado \cite{BergVarillyAlvarado} even produced a transcendental cubic obstruction.

For log K3 surfaces the situation is however different; it was proven that the Brauer--Manin obstruction is not the only obstruction to the integral Hasse principle \cite{Harpaz} and \cite{JS}. On the other hand, Colliot-Th\'el\`ene and Wittenberg \cite{CTW} showed that the Brauer group never obstructs the Hasse principle for the equation $x^3+y^3+z^3=n$ which is in line with the conjecture that this equation has an integral solution for $n \not \equiv \pm 4\mod 9$.

The Hasse principle and the effectivity of the Brauer--Manin obstruction for the equation $x^3+y^3+z^3-xyz=k$ were studied by Ghosh and Sarnak \cite{GhoshSarnakMarkoffSurfaces}, Colliot-Th\'el\`ene, Wei and Xu \cite{CTWeiXu}, and Loughran and Mitankin \cite{LoughranMitankin}. Another classical affine cubic equation was studied in this manner by Bright and Loughran \cite{BrightLoughran}.

This paper gives the first examples of higher odd order Brauer--Manin obstructions on any type of scheme; all other known examples of the Brauer--Manin obstruction to the (integral) Hasse principle are of either order $2$ or $3$. This is the highest possible prime order for such an obstruction on log K3 surfaces; for a generic anticanonical divisor $C$ on a del Pezzo surface $X$ the order of algebraic Brauer groups of $X \backslash C$ is only divisible by the primes $2$, $3$ and $5$, see Table~1 in \cite{BrightLyczak}. Their results also show that the quintic algebraic obstruction described in this paper are particular to the degree $5$ case; there is an inclusion $\Br X \hookrightarrow \Br_1 U$ whose cokernel is divisible by the degree of the del Pezzo surface $X$. Hence $\Br_1 U/\Br X$ only has $5$-torsion if $X$ is a quintic del Pezzo surface.

\subsection*{A study in families} The novel approach in this paper is to study affine surfaces $U$ in families by fixing the compactification $X$ and letting the complementary divisor $C$ vary. An understanding of the arithmetic and geometry of $X$ will be helpful in studying the open surfaces $U$.

We propose a general methodology for studying this setup, which we illustrated in the special setting of del Pezzo surfaces of degree $5$. An important result for these surfaces is that they always have a point. This is a classic result by Enriques \cite{Enriques1897} which was also proved by Swinnerton-Dyer \cite{SwinnertonDyerdP5s}, Skorobogatov \cite{SkorobogatovdP5s} and many others. This proves that $X$ is rational over $k$, $\Br X/\Br k= 0$ and that $X$ satisfies weak approximation. It also allows one to classify and to construct such surfaces over $k$, which was done in detail in \cite{dP5s}.

We produce models $\mathcal X/\mathbb Z$ for $X/\mathbb Q$ by following this construction over the integers. In this process one has a few more choices along the way which allow one to control the reductions $\mathcal X_{\mathfrak p}$ for all primes $\mathfrak p$. To finally construct a model of a log K3 surface one considers the complement $\mathcal U_h$ of a hyperplane section $\{h=0\}$ in $\mathcal X$.

Using the abundance of points on quintic del Pezzo surfaces we can deduce that for any $h$ the open subscheme $\mathcal U_h$ has points over all $\mathbb Z_\ell$, except possibly for a very few small primes $\ell$. In our cases only local solubility at $\ell=2$ is not immediate and will depend on $h$.

We also use the geometric and arithmetic properties of quintic del Pezzo surfaces to compute the Brauer--Manin obstruction on each $\mathcal U_h$. We show that the invariant maps are identically $0$ for all but an explicit finite list of primes. To effectively deal with a remaining prime $p$ we show that it is enough to only study the closed fibre of $\mathcal X \times \mathbb Z_p$; a surprising result especially for the wildly ramified prime $p=5$. We end up with examples of quintic Brauer--Manin obstructions to both the Hasse principle and strong approximation.

There is no reason why this construction only works for affine opens of quintic del Pezzo surfaces; one could use a similar construction to produce models of rational varieties while controlling the arithmetic of the individual fibres.

\subsection{Outline}

We start by recalling some necessary facts on del Pezzo and log K3 surfaces, Brauer groups and the Brauer--Manin obstruction. Then we compute the algebraic Brauer group of log K3 surfaces $U=X\backslash C$ where $X$ is a del Pezzo surface of degree $5$ and $C$ is an anticanonical divisor. In the third section we give an example of a construction of a model $\mathcal X/\mathbb Z$ of a quintic del Pezzo surface $X/\mathbb Q$ such that any anticanonical complement $U_h=X\backslash \{h=0\}$ has an element of order $5$ in the Brauer group. The next section is devoted to the arithmetic of each $\mathcal U_h$. In particular we compute the Brauer--Manin obstruction coming from the element of order $5$. In the last section we use the same construction to produce a different family of log K3 surface $\mathcal U_h$ for which the arithmetic behaves differently, but there still is an element of order $5$ in the Brauer group.

\subsection{Notation and conventions}

Let $k$ be a field. We will write $\kbar$ for a fixed algebraic closure and $k\sep$ for the separable closure of $k$ in $\kbar$. The absolute Galois group of a field~$k$ is denoted by~$G_k = \Gal(k\sep/k)$. A \textit{variety} over a field~$k$ is a separated scheme of finite type over~$\Spec k$. A \textit{curve} over a field $k$ is a variety over $k$ of pure dimension $1$, it need not be irreducible, reduced or smooth. A \textit{surface} over a field~$k$ is a geometrically integral variety of dimension~$2$ over~$k$. A \textit{curve on a surface} over a field $k$ is a closed subscheme of the surface which is a curve over $k$. For a scheme $X$ over a field $k$ we will write $X_K$ for the base change $X \times_k K$ for any field extension $K$ of $k$.  The notation $\Xbar$ will be synonymous for $X_{\kbar}$.

\subsection*{Acknowledgements}

Most results in these article were obtained during my Ph.D.~studies at Leiden University. I would like to thank my supervisor Martin Bright for his help and our many discussions on the subject.

This paper was completed as part of a project which received funding from the European Union's Horizon 2020 research and innovation
programme under the Marie Sk\l odowska-Curie grant agreement No.~754411.

\section{Preliminaries}

We will consider the existence of integral solutions to polynomial equations defining surfaces. We will first collect the key notions and results on the surfaces we will encounter. Then we will recall the necessary results on Brauer groups and the Brauer--Manin obstruction to integral points.

\subsection{Del Pezzo surface of degree $5$}

We will review some facts about del Pezzo surfaces.  The main references will be \cite{Manin} and \cite{Demazure}. For an overview of the arithmetic of such surfaces one is referred to \cite{arithmeticdps}.

\begin{dfn}
	Let $k$ be a field. A \textit{del Pezzo surface} is a smooth projective surface~$X$ over~$k$ such that the anticanonical line bundle $\omega_X^{-1}$ is ample.  The degree of a del Pezzo surface is the anticanonical self-intersection $K_X^2$.
\end{dfn}

We will only need del Pezzo surfaces of degree $5$. In which case $\omega_X^{-1}$ is even very ample over $k$ and we see that every del Pezzo surface of degree $5$ can be embedded as a degree $5$ surface in $\mathbb P^5_k$. Let us collect some facts on the geometry of these surfaces.

\begin{lem}\label{lem:geomofdp5s}
Let $X$ be a del Pezzo surface of degree $5$ over a separably closed field $k$.
\begin{enumerate}
\item The Picard group $\Pic X$ is free of rank $5$ and it has an orthogonal basis $L_0, L_1, \ldots, L_4$ with respect to intersection pairing which satisfies $L_0^2=1$ and $L_i^2=-1$ for $i\ne 0$.
\item In any such basis the canonical class is given by $K_X=-3L_0+L_1+L_2+L_3+L_4$.
\item There are precisely ten classes $D \in \Pic X$ which satisfy $D^2=D\cdot K_X=-1$, namely $L_i$ for $i\ne 0$ and $L_{ij}:=L_0-L_1-L_2$ for $0<i<j\leq 4$. Each of these classes contain a unique curve, and these curves are smooth, irreducible and have genus $0$.
\item The intersection graph of these ten \textit{$-1$-curves} is the so-called Petersen graph shown in Figure~\ref{fig:petersen}.
	
	\begin{figure}[h]
		\centering
		\begin{tikzpicture}
		\tikzstyle{every node}=[draw, shape=circle, minimum size=1cm];
		\path (0+18:3cm) node (outer0) {$L_1$};
		\path (1*72+18:3cm) node (outer1) {$L_{12}$};
		\path (2*72+18:3cm) node (outer2) {$L_2$};
		\path (3*72+18:3cm) node (outer3) {$L_{23}$};
		\path (4*72+18:3cm) node (outer4) {$L_{14}$};
		\draw (outer0) -- (outer1) -- (outer2) -- (outer3) -- (outer4) -- (outer0);
		\path (0+18:1.4cm) node (inner0) {$L_{13}$};
		\path (1*72+18:1.4cm) node (inner1) {$L_{34}$};
		\path (2*72+18:1.4cm) node (inner2) {$L_{24}$};
		\path (3*72+18:1.4cm) node (inner3) {$L_3$};
		\path (4*72+18:1.4cm) node (inner4) {$L_4$};
		\draw (inner0) -- (inner2) -- (inner4) -- (inner1) -- (inner3) -- (inner0);
		\draw (inner0) -- (outer0);
		\draw (inner1) -- (outer1);
		\draw (inner2) -- (outer2);
		\draw (inner3) -- (outer3);
		\draw (inner4) -- (outer4);
		\end{tikzpicture}
		\caption{The intersection graph of $-1$-curves on a del Pezzo surface of degree $5$.}
		\label{fig:petersen}
	\end{figure}
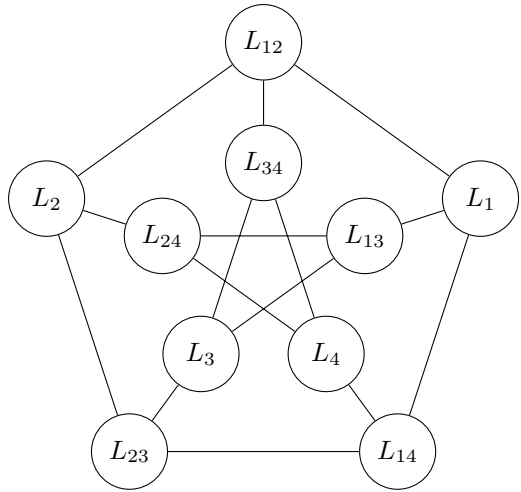
\item Let $A(X)$ be the group of automorphisms of $\Pic X$ which preserve the intersection pairing and the canonical class. Then $A(X)$ is isomorphic to $S_5$ and this isomorphism is unique up to conjugation.
\end{enumerate}
\end{lem}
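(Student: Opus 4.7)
The plan is to reduce to the classical description of $X$ as a blow-up of $\mathbb{P}^2_k$ at four points in general position, after which all five statements become concrete computations. One first appeals to the standard classification of del Pezzo surfaces (see Manin \cite{Manin} or Demazure \cite{Demazure}): over a separably closed field, any del Pezzo surface of degree $5$ is isomorphic to the blow-up $\pi \colon X \to \mathbb{P}^2$ at four points $p_1,\ldots,p_4$ in general position (no three collinear). From this, part (a) is immediate on taking $L_0 = \pi^* H$ together with the exceptional classes $L_1,\ldots,L_4$, and part (b) follows from the standard blow-up formula $K_X = \pi^* K_{\mathbb{P}^2} + \sum_i E_i = -3L_0 + L_1 + \cdots + L_4$.

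For part (c), I would write $D = a L_0 + \sum_i b_i L_i$ and impose $D^2 = a^2 - \sum b_i^2 = -1$ together with $-D\cdot K_X = 3a + \sum b_i = 1$. A short case analysis, using the Cauchy--Schwarz bound $(\sum b_i)^2 \le 4 \sum b_i^2$ to force $a \in \{0,1\}$, leaves exactly the ten classes $L_i$ and $L_{ij} = L_0 - L_i - L_j$. Each is visibly effective: $L_i$ is the exceptional divisor over $p_i$, and $L_{ij}$ is the strict transform of the unique line through $p_i$ and $p_j$; both are smooth and rational. Uniqueness of the representing curve in each class follows from a brief Riemann--Roch computation ($\chi(\mathcal{O}_X(D))=1$ and $H^2(D)=0$ since $-K_X$ is ample, whence $h^0(D)=1$). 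A direct computation of the ten pairwise intersections then yields the Petersen graph of part (d).

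The main work is part (e). Any $\varphi \in A(X)$ preserves the set of ten $-1$-classes and their intersection numbers, hence induces an automorphism of the Petersen graph; since these ten classes span $\Pic X \otimes \mathbb{Q}$, this gives an injection $A(X) \hookrightarrow \mathrm{Aut}(\text{Petersen}) \cong S_5$. For surjectivity I would exhibit generators concretely: the symmetric group $S_4$ permuting $p_1,\ldots,p_4$ sits inside $A(X)$, and one further involution arises from an alternative blow-down structure on $X$ (the surface admits five contractions to $\mathbb{P}^2$, and passing between two of them produces an element outside the $S_4$). Together these fill out an $S_5$.

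The main obstacle lies in the final step: checking that the resulting isomorphism $A(X) \cong S_5$ is canonical only up to conjugation. This reflects the fact that a labelling of the ten $-1$-curves by unordered pairs from a five-element set is itself only determined up to the $S_5$ relabelling, so any two identifications differ by an inner automorphism of $S_5$.
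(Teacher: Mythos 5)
Your treatment of parts (a)--(d) follows the same route as the paper: reduce to the blow-up of $\mathbb{P}^2$ at four points in general position, then compute directly. The paper cites Demazure and leaves the details to the reader, so your extra detail (the Cauchy--Schwarz bound in (c), the Riemann--Roch argument for uniqueness of the representing curve) is a welcome expansion and is correct.

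For part (e) you and the paper both start the same way, observing that $A(X)$ injects into the automorphism group of the Petersen graph, which is $S_5$ via labelling vertices by $2$-element subsets of a $5$-element set. You then diverge on surjectivity: the paper checks that each graph automorphism already preserves the full lattice structure of $\Pic X$ (intersection form and canonical class), so that $A(X) = \mathrm{Aut}(\text{Petersen})$ on the nose; you instead exhibit a geometric generating set (an $S_4$ permuting the exceptional classes $L_1,\ldots,L_4$, plus one element coming from a second blow-down structure) and argue these generate $S_5$. Both are valid; the paper's version is a purely lattice-theoretic verification, yours requires the additional step that your geometric elements actually generate (e.g.\ your extra element must act as a transposition such as $(4\,5)$, which together with $S_4$ gives all of $S_5$), so you should say why.

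The one place I would tighten your argument is the final uniqueness claim. You call it the ``main obstacle'' and give a heuristic about relabellings of the $-1$-curves, but that only shows that the specific isomorphisms arising from labellings are conjugate, not that an arbitrary isomorphism $A(X)\to S_5$ is conjugate to them. The clean statement you want is simply that $\mathrm{Out}(S_5)$ is trivial (this is the fact the paper invokes when it says $S_5$ is ``an inner group''): given two isomorphisms $\phi_1,\phi_2\colon A(X)\to S_5$, the composite $\phi_2\circ\phi_1^{-1}$ is an automorphism of $S_5$, hence inner, hence $\phi_1$ and $\phi_2$ differ by conjugation. Once phrased this way there is no obstacle at all.
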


\begin{proof}
The first result follows from the fact that a del Pezzo surfaces of degree $5$ is geometrically the blowup of the projective plane in $4$ points, no three of which lie on a line, see for example \cite[III, Proposition~3]{Demazure}. From here one can deduce the remaining statements.

We do draw attention to a particularly nice proof of the last statement.  Note that $A(X)$ permutes the $-1$-classes and that these classes generate the Picard group. So $A(X)$ is a subgroup of automorphism group of the Petersen graph. To compute this automorphism group we identify the vertices with $\binom I2$ for $I=\{1,2,3,4,5\}$ such that $\left\{\{i,j\},\{k,l\}\right\}$ is an edge precisely if $i$, $j$, $k$ and $l$ are distinct. This makes it straight forward to show that the automorphism group of the Petersen graph is isomorphic to $S_5$. Next one checks that each automorphism of the graph is actually an automorphism of the whole Picard group.

These isomorphisms are all unique up to conjugacy since $S_5$ is an inner group.
\end{proof}

Let us now switch to del Pezzo surfaces of degree $5$ over general fields. The following proposition shows that the geometric Picard group as a Galois module is a principal invariant.

\begin{prop}\label{prop:bijectionisomdp5s}
Let $k$ be a field. There is a bijection between isomorphism classes of del Pezzo surfaces of degree $5$ over $k$ and conjugacy classes of actions of $G_k$ on $\Pic X\sep$.
\end{prop}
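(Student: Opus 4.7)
The plan is to use Galois descent. By Lemma~\ref{lem:geomofdp5s} and the remark that every del Pezzo surface of degree $5$ over a separably closed field is the blow-up of $\mathbb{P}^2$ in four points in general position, all such surfaces over $\kbar$ are isomorphic. Fix one model $X_0$ over $k\sep$; then isomorphism classes of del Pezzo surfaces of degree~$5$ over $k$ are in bijection with the pointed set $\H^1(G_k,\mathrm{Aut}(X_0))$ via the usual twisting correspondence for forms of a variety.

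First I would identify the automorphism group with $A(X_0)$. The natural map $\mathrm{Aut}(X_0)\to A(X_0)$ sending an automorphism to its induced action on $\Pic X_0$ is injective because $\omega_{X_0}^{-1}$ is very ample, so an automorphism acting trivially on $\Pic$ fixes the anticanonical embedding. Surjectivity can be verified by exhibiting explicit automorphisms realising the $S_5$-action on the Petersen graph — for instance, using the description of $X_0$ as the blow-up of $\mathbb{P}^2$ in four points, one checks that permutations of the four blown-up points together with the standard Cremona involution generate $A(X_0)\cong S_5$. In particular, $\mathrm{Aut}(X_0)$ is a finite constant group scheme over $k$, and the Galois action on it is trivial (equivalently, every Galois-semilinear automorphism of $\mathrm{Aut}(X_0)$ is inner, since $S_5$ is complete).

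Consequently $\H^1(G_k,\mathrm{Aut}(X_0))=\H^1(G_k,S_5)$ with trivial $G_k$-action, which by definition parameterises continuous homomorphisms $\rho\colon G_k\to S_5$ up to conjugacy. Via the identification $S_5\cong A(X_0)$, such a $\rho$ is precisely a continuous action of $G_k$ on $\Pic X_0$ preserving the intersection pairing and the canonical class, and conjugacy of homomorphisms translates into conjugacy of such actions. This sets up the bijection, and it remains to check that under the twisting construction a $k$-form $X$ corresponds to the Galois action on $\Pic \Xbar$ obtained by transport of structure through any chosen $k\sep$-isomorphism $\Xbar\cong X_0$.

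The main obstacle is the identification $\mathrm{Aut}(X_0)\xrightarrow{\sim}A(X_0)$; everything else is formal Galois descent. Surjectivity is the delicate part, because one must verify that every symmetry of the Petersen graph actually lifts to a biregular automorphism of $X_0$ — this is where the concrete blow-up model and the classical construction of the $S_5$-action on the quintic del Pezzo are used. Injectivity, on the other hand, is immediate from very-ampleness of $-K_{X_0}$.
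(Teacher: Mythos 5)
The paper's proof of this proposition is a one-line citation to Lemma~14 of the reference [dP5s], so you are supplying a proof where the paper gives none; your Galois-descent route is the natural choice and is essentially the argument one would expect to find in that reference. Two points need tightening. First, for injectivity of $\mathrm{Aut}(X_0)\to A(X_0)$: observing that $\phi$ preserves the very ample class $-K_{X_0}$ only shows that $\phi$ is induced by a linear automorphism of the ambient $\mathbb P^5$ preserving $X_0$, not that $\phi$ is trivial. The clean argument is that an automorphism acting trivially on $\Pic X_0$ fixes each $(-1)$-class, hence each $(-1)$-curve (each class contains a unique curve), so after blowing down four disjoint $(-1)$-curves it descends to an automorphism of $\mathbb P^2$ fixing four points in general position, and the $\mathrm{PGL}_3$-stabiliser of such a configuration is trivial. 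Second, the parenthetical invoking the completeness of $S_5$ is not the reason the Galois action on $\mathrm{Aut}(X_0)$ is trivial. The correct and simpler reason is that, taking $X_0$ to be the \emph{split} form over $k$ (the blow-up of $\mathbb P^2_k$ at four $k$-rational points in general position), all $120$ automorphisms are already defined over $k$, so $G_k$ acts trivially on $\mathrm{Aut}(X_{0,k\sep})$. With those two fixes the remaining steps are standard: $\H^1(G_k,S_5)$ with trivial action is the set of continuous homomorphisms $G_k\to S_5$ up to conjugacy, which via $S_5\cong A(X_0)$ is exactly the set of conjugacy classes of $G_k$-actions on $\Pic X\sep$, and under twisting a $k$-form $X$ goes to the transported Galois action on $\Pic\Xbar$.
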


\begin{proof}
This is Lemma~14 in \cite{dP5s}.
\end{proof}

Lemma~\ref{lem:geomofdp5s} and Proposition~\ref{prop:bijectionisomdp5s} together show that there is a bijection between quintic del Pezzo surface over $k$ and group homomorphisms $G_k \to S_5$ up to conjugacy. We will describe how to construct a del Pezzo surface from such a group homomorphism as was done in \cite{dP5s}.

\begin{prop}\label{prop:constructiondp5s}
Let $k$ be a field with absolute Galois group $G_k$ and let $\Lambda$ be the effective generator of $\Pic \mathbb P^2_k$. Consider a group homomorphism $\psi \colon G_k \to S_5$. Fix five points $P_i \in \mathbb P^2_k(k\sep)$ such that no three lie on a line and that $G_k$ acts on these points as $S_5$ acts on its indexes.
\begin{enumerate}
\item The linear system $\mathcal L=\left| \mathcal O_{\mathbb P^2_k}(5\Lambda-2P_1-2P_2-2P_3-2P_4-2P_5) \right|$ has dimension $5$.
\item The image of the associated map is a del Pezzo $X$ surface of degree $5$.
\item The isomorphism class of $X$ only depends on the conjugacy class of $\psi$ and is independent of the choice of $P_i$.
\item The composition $G_k \to A(X\sep) \xrightarrow{\cong} S_5$ recovers $\psi$ up to conjugacy.
\end{enumerate}
\end{prop}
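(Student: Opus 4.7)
The plan is to analyse the construction via the blowup $Y$ of $\mathbb P^2_{\kbar}$ at $P_1,\dots,P_5$, with exceptional divisors $E_1,\dots,E_5$ and hyperplane class $H$; the pullback to $Y$ of the linear system $\mathcal L$ is $|D|$ with $D:=5H-2\sum_i E_i$. For (a), I would compute $D^2=5$ and $D\cdot K_Y=-5$ (using $K_Y=-3H+\sum E_i$), so Riemann--Roch gives $\chi(\mathcal O_Y(D))=1+\tfrac12(D^2-D\cdot K_Y)=6$. Higher cohomology vanishes: Serre duality yields $H^2(Y,D)\cong H^0(Y,K_Y-D)^{\vee}=0$ because $(K_Y-D)\cdot H<0$, while $D-K_Y=8H-3\sum E_i$ is ample on the degree~$4$ del Pezzo $Y$ (positive intersection with each of its sixteen $(-1)$-curves, and positive self-intersection $19$), so Kawamata--Viehweg gives $H^1(Y,D)=0$. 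Hence $\dim|D|=5$.

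For (b), the key is to compute $D$ against the $(-1)$-curves of $Y$: one finds $D\cdot E_i=2$, $D\cdot(H-E_i-E_j)=1$, and $D\cdot C=0$ only for the class $C:=2H-\sum E_i$, the strict transform of the unique conic through the five points. Applying Reider's theorem to the ample divisor $L:=D-K_Y$ (with $L^2=19\ge 5$ and no Reider exception, since $L\cdot E=0$ with $E$ effective forces $E=0$ by ampleness, and a short case analysis rules out $L\cdot E=1$, $E^2=0$), one concludes that $|D|$ is base-point-free. Thus $\phi_{|D|}\colon Y\to\mathbb P^5$ is a morphism contracting exactly the single $(-1)$-curve $C$. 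Let $\pi\colon Y\to X$ denote this contraction. Since $\pi^*(-K_X)=-K_Y+C=5H-2\sum E_i=D$, the morphism $\phi_{|D|}$ factors as $\pi$ followed by the anticanonical embedding of $X$ into $\mathbb P^5$; hence $X$ is a smooth surface of degree $D^2=5$ with very ample $-K_X$, i.e.\ a del Pezzo surface of degree $5$.

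For (c) and (d), I would use that $G_k$ acts on the chosen configuration $\{P_i\}$ via $\psi$, hence on $Y_{\kbar}$ by permuting the $E_i$ accordingly (and fixing $H$). The class $C=2H-\sum E_i$ is Galois-invariant, so $\pi$ descends to $k$, and $\Pic X_{\kbar}$ is identified with $C^{\perp}\subset\Pic Y_{\kbar}$. The ten $(-1)$-curves of $X$ correspond to the classes $H-E_i-E_j$, on which $G_k$ acts via the natural action of $\psi$ on $\binom{\{1,\dots,5\}}{2}$. Under the identification of $A(X_{\kbar})$ with $S_5$ through the Petersen graph (Lemma~\ref{lem:geomofdp5s}(e)), this is exactly $\psi$ up to conjugacy, proving~(d). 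For~(c), any two configurations compatible with $\psi$ produce Galois actions on $\Pic X_{\kbar}$ that agree up to an inner automorphism of $S_5$, so Proposition~\ref{prop:bijectionisomdp5s} gives isomorphic $k$-forms of $X$.

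The hardest part will be establishing base-point-freeness of $|D|$ on $Y$. Reider's theorem is the cleanest tool, but one could alternatively argue by an explicit sequence of two Cremona transformations based at three of the $P_i$ and then three of the transformed points, which carries $|5\Lambda-2\sum_{i=1}^{5}P_i|$ to the linear system $|3\Lambda-Q_1-Q_2-Q_3-Q_4|$ on $\mathbb P^2$ with $Q_j$ in general position; the latter is visibly the anticanonical system of the blowup of $\mathbb P^2$ at four points, i.e.\ the anticanonical embedding of a quintic del Pezzo.
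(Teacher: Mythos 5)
Your proof is correct but genuinely different from the paper's for parts (a) and (b): the paper simply cites Theorem~5 of \cite{dP5s}, whereas you give a self-contained geometric argument, passing to the blowup $Y$ of $\mathbb P^2$ at the five points (a degree-$4$ del Pezzo), computing $h^0(Y,D)$ via Riemann--Roch and a vanishing theorem, establishing base-point-freeness via Reider, and realising $X$ as the contraction of the strict transform $C=2H-\sum E_i$ of the conic through the five points. Your treatment of (c) and (d) matches the paper's in substance: both identify the ten $-1$-curves on $X$ with the lines $\overline{P_iP_j}$ (equivalently the classes $H-E_i-E_j$ in $C^\perp\subset\Pic Y_{\kbar}$), so that Galois acts through $\psi$ permuting pairs of indices, and (c) then follows from Proposition~\ref{prop:bijectionisomdp5s}. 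The one caveat with your primary route is that Kawamata--Viehweg vanishing and Reider's theorem are characteristic-zero tools while $k$ is an arbitrary field; your alternative via the two Cremona transformations (or, more simply, rewriting $D=-K_Y+C$ and using the restriction sequence $0 \to \mathcal O_Y(-K_Y) \to \mathcal O_Y(D) \to \mathcal O_C \to 0$ together with $h^1(Y,-K_Y)=0$) is characteristic-free and preferable in full generality. What your approach buys is that it makes the intermediate geometry explicit: the factorisation of the map given by $\mathcal L$ through the degree-$4$ del Pezzo $Y$ and the contraction of the conic $C$, which the paper leaves implicit behind the citation.
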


\begin{proof}
The first two statements are Theorem~5 in \cite{dP5s}. It also shows that the $-1$-curves on $X$ correspond to the lines on $\mathbb P^2_{k\sep}$ passing through two of the points $P_i$. This shows that the action of Galois on the $-1$-curves on $X$ and hence on $\Pic X\sep$ equals $\psi$ up to conjugacy.
\end{proof}

\subsection{Log K3 surfaces of $\dP5$ type}

For our interest in integral points we move to surfaces which are not necessarily projective. The following class will be important.

\begin{dfn}
Let~$U$ be a smooth surface over a field~$k$.  A \textit{log K3 structure} on~$U$ is a triple~$(X,C,i)$ consisting of a proper smooth surface~$X$ over $k$, an effective anticanonical divisor $C$ on $X$ with simple normal crossings and an open embedding~$i\colon U \to X$, such that~$i$ induces an isomorphism between~$U$ and~$X\backslash C$. A \textit{log K3 surface} is a simply connected, smooth surface~$U$ over~$k$ together with a choice of log K3 structure~$(X,C,i)$ on~$U$.

Let $X$ be a del Pezzo surface of degree $5$ and let $C$ be an effective anticanonical divisor on $X$. The affine surface $U=X\backslash C$ is called a \textit{log K3 surface of $\dP5$ type}.
\end{dfn}

Whenever we consider such a surface $U$ without explicitly specifying $X$ we will assume the choice of compactification to be understood from context.

\subsection{Brauer groups}

Let~$U$ be a scheme over a field~$k$. We will need the concept of the \textit{Brauer group} $\Br U$ of~$U$.  Two common definitions are the \'etale cohomology group $\Br U := \H^2(U,\Gm)$ and the group $\BrAz U$ of equivalence classes of Azumaya algebras on $U$.  There is a natural morphism $\BrAz U \to \Br U$ which induces an isomorphism between $\BrAz U$ and $(\Br U)\tors$ if $U$ is a quasi-projective scheme over~$k$ by an unpublished result by Gabber.  Another proof by De Jong can be found in \cite{deJong}.  In Theorem~6.6.7 in \cite{Poonen} we find conditions for $\Br U$ to be a torsion group and we conclude that we can identify both types of Brauer groups for regular integral schemes which are quasi-projective over a field.  All the varieties for which we will consider the Brauer group will satisfy these conditions and we will pass freely between the two notions.

Using the functoriality of associating the Brauer group to the scheme we can define the following filtration: $\Br_0 U \subseteq \Br_1 U \subseteq \Br U$, where the \textit{constant Brauer group} $\Br_0 U$ is defined as $\Im(\Br k \to \Br U)$ and the \textit{algebraic Brauer group} $\Br_1 U$ is $\ker(\Br U \to \Br U\sep)$.  We will denote the quotient $\Br_1 U/\Br_0 U$ by $\Br_1 U/\Br k$ although the map $\Br k \to \Br_1 U$ need not be injective.  It follows from the Hochschild--Serre spectral sequence that $\Br_1 U/\Br k$ is isomorphic to~$\H^1(G_k, \Pic U\sep)$ in certain cases.  This is well-known if $U$ is proper, see for example \cite[Corollary~6.7.8]{Poonen}, but the proof actually works under the weaker condition~$\Gm(U\sep)=k^{\textup{sep},\times}$.

If $U$ is an integral noetherian regular scheme over a field of characteristic $0$ the natural map $\Br U \to \Br \kappa(U)$ is an inclusion \cite[Section~II.1]{G-III}. So in this case we can represent elements of the Brauer group by classes of central simple algebras over the field~$\kappa(U)$.  We will construct Azumaya algebras on $U$ as cyclic algebras over the function field.

\begin{dfn}
Let $\kappa$ be a field and $n$ an integer not dividing the characteristic of $\kappa$. An Azumaya algebra in the image of the cup product
\[
\H^1(\kappa,\mu_n) \times \H^1(\kappa,\mathbb Z/n\mathbb Z) \to \H^2(\kappa,\mu_n)\cong \Br k[n]
\]
is called a \textit{cyclic algebra} over $\kappa$.

A cyclic extension $\kappa'/\kappa$ of degree $n$ with a fixed generator $\sigma \in \Gal(\kappa'/\kappa)$ determines an element of $\Hom(G_\kappa,\mathbb Z/n\mathbb Z) \cong \H^1(\kappa,\mathbb Z/n\mathbb Z)$ by sending $\sigma$ to $1$. Any element $a \in \kappa^\times$ gives an element in $\H^1(\kappa,\mu_n) \cong \kappa^\times/(\kappa^\times)^n$. The cyclic algebra $a \cup (\kappa'/\kappa,\sigma)$ is denoted by $(a,\kappa',\sigma)$.
\end{dfn}

For more details, see \cite[Section~1.5.7]{Poonen}. Here one also finds the following important result.

\begin{lem}\label{lem:trivialcycalgs}
A cyclic algebra $(a,\kappa',\sigma)$ is trivial in $\Br \kappa$ precisely when $a \in N_{\kappa'/\kappa}(\kappa'^\times)$.
\end{lem}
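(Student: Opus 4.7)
The plan is to identify the cyclic algebra $(a,\kappa',\sigma)$ with an explicit crossed product central simple algebra, and then characterize its splitting by elementary linear algebra over $\kappa'$.

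First I would verify that the class $a\cup\chi_\sigma \in \H^2(\kappa,\mu_n)$ is represented by the crossed product
\[
A \;=\; \bigoplus_{i=0}^{n-1} \kappa'\, y^i, \qquad yx=\sigma(x)\,y \text{ for } x\in\kappa', \qquad y^n=a.
\]
This is a standard $2$-cocycle calculation: writing each $g \in G_\kappa$ uniquely as $\sigma^{i(g)}h$ with $h \in G_{\kappa'}$ and $i(g) \in \{0,\ldots,n-1\}$, and choosing $b \in \kappa^{\textup{sep},\times}$ with $b^n=a$, one checks that $(g,g')\mapsto b^{\,i(g)+i(g')-i(gg')}$ represents $a\cup\chi_\sigma$ and is precisely the defining cocycle of the crossed product $A$. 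Under the identification $\Br\kappa[n]\cong \H^2(\kappa,\mu_n)$ the algebra $A$ thus represents the class of $(a,\kappa',\sigma)$, and the task reduces to showing that $A$ splits if and only if $a \in N_{\kappa'/\kappa}(\kappa'^\times)$.

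For the easy direction, suppose $a=N_{\kappa'/\kappa}(b)$ for some $b\in\kappa'^\times$. I would let $A$ act on $V=\kappa'$ by letting $\kappa' \subset A$ act by multiplication and $y$ act by $v\mapsto b\,\sigma(v)$. The commutation relation $yx=\sigma(x)y$ is then automatic, and iterating shows that $y^n$ acts as multiplication by $b\,\sigma(b)\cdots\sigma^{n-1}(b)=a$, so the defining relations are respected. Since $A$ is simple and $\dim_\kappa \mathrm{End}_\kappa(V)=n^2=\dim_\kappa A$, the induced map $A\to \mathrm{End}_\kappa(V)\cong M_n(\kappa)$ is an isomorphism, so $A$ splits.

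Conversely, if $A$ is split it admits a faithful simple $\kappa$-module $V$ of dimension $n$; restricting along $\kappa'\hookrightarrow A$ turns $V$ into a $1$-dimensional $\kappa'$-vector space. After fixing a $\kappa'$-linear isomorphism $V\cong\kappa'$, the relation $yx=\sigma(x)y$ forces $y$ to act on $\kappa'$ by a $\sigma$-semilinear map, hence by $v\mapsto b\,\sigma(v)$ for a unique $b\in\kappa'^\times$ (nonzero since $y^n=a$ is invertible); the relation $y^n=a$ then reads $N_{\kappa'/\kappa}(b)=a$. The main subtlety I would be careful to justify is the $\kappa'$-linear identification $V\cong\kappa'$, which rests on the one-dimensionality of $V$ as a $\kappa'$-module; once that is secured, the classification of $\sigma$-semilinear endomorphisms of $\kappa'$ gives the conclusion immediately.
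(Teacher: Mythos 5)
Your proof is correct. Note that the paper does not itself prove this lemma: it simply cites \cite[Section~1.5.7]{Poonen} and takes the norm criterion as known, so any proof you give is "your own route." What you supply is the standard direct argument: identify the class $a\cup(\kappa'/\kappa,\sigma)$ with the explicit crossed product $A=\bigoplus_{i=0}^{n-1}\kappa' y^i$, $yx=\sigma(x)y$, $y^n=a$, and read off splitting from the existence of an $n$-dimensional faithful $A$-module. Both directions are handled cleanly: a norm $a=N_{\kappa'/\kappa}(b)$ gives the $\sigma$-semilinear operator $v\mapsto b\sigma(v)$ on $\kappa'$, and conversely the fact that a simple module for a split $A$ is one-dimensional over the maximal subfield $\kappa'$ forces $y$ to be $\sigma$-semilinear, hence $v\mapsto b\sigma(v)$, so $y^n=a$ yields $N_{\kappa'/\kappa}(b)=a$. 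The only step you leave as "one checks," the cocycle comparison between $a\cup\chi_\sigma$ and the defining cocycle of $A$, is genuinely standard, though it is where all the sign/convention bookkeeping lives, so it deserves a sentence of care in a polished write-up. For comparison, the shorter cohomological route one often sees (and which is essentially what the cited reference uses) is to invoke periodicity of Tate cohomology for the cyclic group $G=\Gal(\kappa'/\kappa)$, namely $\H^2(G,\kappa'^\times)\cong\hat{\H}^0(G,\kappa'^\times)=\kappa^\times/N_{\kappa'/\kappa}(\kappa'^\times)$, together with the inflation isomorphism onto the relative Brauer group $\Br(\kappa'/\kappa)$; under these identifications $(a,\kappa',\sigma)$ is sent to the class of $a$, giving the criterion immediately. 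Your crossed-product argument is longer but has the virtue of producing the splitting module explicitly; the cohomological one is quicker once periodicity is in hand.
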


To see if a cyclic algebra in~$\Br \kappa(U)$ comes from~$\Br U$ we have the following lemma.

\begin{lem}\label{lem:cyclicalgebraoverscheme}
Consider a smooth and geometrically integral variety $U$ over a field~$k$ satisfying~$\Gm(U\sep)=k^{\textup{sep},\times}$.  Fix a finite cyclic extension $K/k$, a generator $\sigma \in \Gal(K/k)$, and an element~$g \in \kappa(U)^\times$.

The cyclic algebra $\mathcal A = (g, \kappa(U_K)/\kappa(U), \sigma)$ lies in the image of $\Br U \to \Br \kappa(U)$ precisely if $\div g = \Norm_{K/k}(D)$ for some divisor $D$ on $U_K$.
If $k$, and hence $K$, is a number field, and $U$ is everywhere locally soluble then $\mathcal A$ is constant exactly when~$D$ can be taken to be principal.
\end{lem}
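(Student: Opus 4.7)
The plan is to handle the two claims separately: the first via purity of the Brauer group on regular schemes, and the second via the Hochschild--Serre spectral sequence together with the standard cocycle description of cyclic algebras.

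For the first claim, since $U$ is smooth over a field of characteristic zero, $\Br U \to \Br \kappa(U)$ is injective with image equal to the kernel of the sum of residue maps $\partial_x\colon \Br \kappa(U) \to \H^1(\kappa(x), \mathbb Q/\mathbb Z)$ over codimension-one points $x$ of $U$. The character $\chi \in \H^1(k, \mathbb Z/n)$ attached to $(K/k, \sigma)$ is unramified at every $x$ because it is inflated from $k$, so the standard residue formula for cyclic algebras yields
\[
\partial_x \mathcal A \;=\; v_x(g) \cdot \chi|_{G_{\kappa(x)}} \;\in\; \H^1(\kappa(x), \mathbb Z/n).
\]
Since $K/k$ is Galois, every prime $y$ of $U_K$ above $x$ shares the same residue degree $d_x$, equal to the order of the image of $\chi|_{G_{\kappa(x)}}$, and the residue vanishes exactly when $d_x \mid v_x(g)$. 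This is precisely the condition that $v_x(g) \cdot x$ lies in the image at $x$ of the pushforward $\Norm_{K/k}\colon \mathrm{Div}\, U_K \to \mathrm{Div}\, U$. As $\div g$ has finite support, patching local choices of $D$ over each such $x$ produces a global $D$ on $U_K$ with $\Norm_{K/k}(D) = \div g$, proving the first claim.

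For the second claim, the Hochschild--Serre spectral sequence provides a canonical injection
\[
\Br_1 U / \Br_0 U \;\hookrightarrow\; \H^1(G_k, \Pic U\sep),
\]
and the key step is to identify the image of $[\mathcal A]$ with the class of $[D] \in \Pic U_K$ in $\H^1(G, \Pic U_K) = \ker N / \Im(\sigma - 1)$, inflated to $\H^1(G_k, \Pic U\sep)$; here $G = \Gal(K/k) = \langle \sigma \rangle$. That $[D] \in \ker N$ follows from $N([D]) = [p^* p_* D] = [p^* \div g] = 0$ in $\Pic U_K$. The identification itself is a diagram chase through the two exact sequences
\[
1 \to K^\times \to \kappa(U_K)^\times \to \mathrm{Prin}(U_K) \to 0 \quad\text{and}\quad 0 \to \mathrm{Prin}(U_K) \to \mathrm{Div}\, U_K \to \Pic U_K \to 0,
\]
using Hilbert 90 for the first and, for the second, the vanishing of $\H^1(G, \mathrm{Div}\, U_K)$ via Shapiro's lemma together with $\H^1(\text{cyclic},\mathbb Z) = 0$.

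Granting the identification, $\mathcal A$ is constant precisely when $[D] \in \Im(\sigma - 1)$, i.e.\ when $D - (\sigma D' - D') = \div h$ for some divisor $D'$ on $U_K$ and some $h \in \kappa(U_K)^\times$. Since $p \circ \sigma = p$, the divisor $\sigma D' - D'$ has trivial pushforward, so $D'' := D - (\sigma D' - D')$ is a principal divisor on $U_K$ with $\Norm_{K/k}(D'') = \Norm_{K/k}(D) = \div g$, exhibiting $D''$ as the desired principal representative. Conversely, a principal $D$ gives $[D] = 0$ in $\Pic U_K$ and hence $\mathcal A$ constant. The hypotheses that $k$ is a number field and $U$ is everywhere locally soluble enter via Hasse--Brauer--Noether to ensure that $\Br k \to \Br_0 U$ is an isomorphism, so the word ``constant'' is unambiguous. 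I expect the main technical obstacle to be the identification of $[\mathcal A]$ with $[D]$ under the Hochschild--Serre injection: this is folklore but requires either a careful diagram chase through the spectral sequence or an explicit cocycle computation; everything else is an assembly of standard tools.
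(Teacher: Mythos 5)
The paper's own ``proof'' is a citation: it refers to Proposition~4.17 of Bright's thesis and remarks that the argument there works once the projectivity hypothesis is replaced by the weaker condition $\Gm(U\sep)=k^{\textup{sep},\times}$. You have instead supplied a self-contained argument, which is a genuinely different (and more informative) route.

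Your treatment of the first claim is sound. Since $U_K\to U$ is finite \'etale, the class $\chi$ attached to $(K/k,\sigma)$ is unramified at every codimension-one point $x$ of $U$, so $\partial_x\mathcal A = v_x(g)\cdot\chi|_{G_{\kappa(x)}}$; this vanishes iff the common residue degree $d_x=[\kappa(x)\cdot K:\kappa(x)]$ divides $v_x(g)$, which is precisely the local condition for $v_x(g)\cdot x$ to be a pushforward from $U_K$. Patching over the finite support of $\div g$ then gives the global $D$.

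Your outline of the second claim is also plausible, and you are candid that the identification of $[\mathcal A]$ with the class of $[D]$ in $\H^1(G,\Pic U_K)$ is the crux; that step can indeed be carried out by the diagram chase you sketch (for cyclic $G$ the connecting map $\H^1(G,\Pic U_K)\to\H^2(G,\mathrm{Prin}\,U_K)$ sends $[D]$ to $[N_G D]=[\div_{U_K}g]$, matching the image of the cyclic algebra class, and $\H^1(G,\mathrm{Div}\,U_K)=0$ as you say). However, your account of where the number field and local solubility hypotheses are used is off. The word ``constant'' means $\mathcal A\in\Im(\Br k\to\Br U)=\Br_0 U$; this is unambiguous whether or not $\Br k\to\Br_0 U$ is injective, so that is not where the hypotheses enter. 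They are in fact needed to make the inflation step legitimate: to conclude from $[\mathcal A]=0$ in $\H^1(G_k,\Pic U\sep)$ that $[D]=0$ in $\H^1(G,\Pic U_K)$ you need the map $\H^1(G,\Pic U_K)\to\H^1(G_k,\Pic U\sep)$ to be injective, and this holds when $\Pic U_K=(\Pic U\sep)^{G_K}$, which by the Hochschild--Serre sequence over $K$ is equivalent to $\Br K\hookrightarrow\Br U_K$. That injectivity is exactly what everywhere local solubility over a number field buys, via evaluation at local points and Hasse--Brauer--Noether. Equivalently, and more directly: if $\mathcal A=\alpha|_U$ with $\alpha\in\Br k$, then $\alpha$ dies in $\Br\kappa(U_K)$, so $\alpha|_K\in\ker(\Br K\to\Br U_K)$, and the hypotheses are precisely what force this kernel to vanish; one then writes $\alpha=(c,K/k,\sigma)$, obtains $g/c=\Norm(h)$, and takes $D=\div_{U_K}h$. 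I would recommend either completing the cocycle identification or replacing the spectral-sequence reduction by this shorter direct argument, and in either case correcting the explanation of where local solubility is used.
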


\begin{proof}
This lemma is similar to Proposition~4.17 from \cite{Brightthesis}.  The difference is that the projectivity assumption is replaced by the weaker condition~$\Gm(U\sep)=k^{\textup{sep},\times}$.  One can check that under this assumption the proof presented in \cite{Brightthesis} is still valid.
\end{proof}

\subsection{Brauer--Manin obstruction}

In some cases elements of the Brauer group allow us to prove that there are no integral points on a scheme. Let $\mathcal U/\mathbb Z$ be a model of $U=\mathcal U_{\mathbb Q}$. The \textit{Brauer--Manin set} of~$\mathcal A$ is the subset of the \textit{integral adelic points}~$\mathcal U(\mathbb A_{\mathbb Q,\infty}) = U(\mathbb R) \times \prod_\ell \mathcal U(\mathbb Z_\ell)$ defined by
\[
\mathcal U(\mathbb A_{\mathbb Q,\infty})^\mathcal A = \left\{(P_\ell) \in \mathcal U(\mathbb A_{\mathbb Q,\infty}) \ \Bigg|\ \sum_\ell \inv_\ell \mathcal A(P_\ell) = 0 \right\}.
\]

Here the \textit{invariant maps} $\inv_\ell$ are those defined in \cite[Theorem~1.5.34]{Poonen}.  Note that the infinite sum is well-defined by \cite[Proposition~8.2.1]{Poonen}.  The Brauer--Manin set is of particular interest because of the property described in the following theorem from \cite{CTXu}.

\begin{lem}\label{lem:computealgebraicbrauergroup}
Let $\mathcal U$ be a scheme over the integers and let $U$ be the generic fibre over $\mathbb Q$.  For any element $\mathcal A \in \Br U$ we have the following chain of inclusions
\[
\mathcal U(\mathbb Z) \subseteq \mathcal U(\mathbb A_{\mathbb Q,\infty})^\mathcal A \subseteq \mathcal U(\mathbb A_{\mathbb Q,\infty}).
\]
\end{lem}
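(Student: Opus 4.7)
The right-hand inclusion is immediate from the definition, so the real content is the inclusion $\mathcal U(\mathbb Z) \subseteq \mathcal U(\mathbb A_{\mathbb Q,\infty})^{\mathcal A}$. The plan is standard: take $P \in \mathcal U(\mathbb Z)$, regard it simultaneously as a global $\mathbb Q$-point of $U$ and as an integral adelic point $(P_v)$ of $\mathcal U$, and then reduce to the reciprocity law for $\Br \mathbb Q$. More precisely, the structural maps $\mathcal U(\mathbb Z) \to \mathcal U(\mathbb Z_\ell)$ and $\mathcal U(\mathbb Z) \to U(\mathbb Q) \to U(\mathbb R)$ furnish both the adelic point $(P_v)$ and a global rational point, still called $P$, so that we may evaluate $\mathcal A$ at $P$ and obtain a single class $\mathcal A(P) \in \Br \mathbb Q$.

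The key input is then functoriality of the Brauer group. For every place $v$ it supplies a commutative square
\[
\begin{CD}
\Br U @>{\mathcal A \mapsto \mathcal A(P)}>> \Br \mathbb Q \\
@VVV @VVV \\
\Br U_{\mathbb Q_v} @>{\mathcal A \mapsto \mathcal A(P_v)}>> \Br \mathbb Q_v
\end{CD}
\]
whose commutativity says that the image of $\mathcal A(P)$ in $\Br \mathbb Q_v$ is precisely $\mathcal A(P_v)$. Applying $\inv_v$ therefore gives $\inv_v \mathcal A(P_v) = \inv_v \mathcal A(P)$. Summing over all places $v$ and invoking the Albert--Brauer--Hasse--Noether reciprocity law $\sum_v \inv_v \beta = 0$ for $\beta \in \Br \mathbb Q$, applied to $\beta = \mathcal A(P)$, yields $\sum_v \inv_v \mathcal A(P_v) = 0$, which is exactly the condition defining the Brauer--Manin set.

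The one step that requires more than bookkeeping is showing that the infinite sum $\sum_v \inv_v \mathcal A(P_v)$ is actually well-defined, i.e.\ has finitely many nonzero terms. This is exactly where the integral model $\mathcal U$ (as opposed to the generic fibre $U$) is used: for all but finitely many primes $\ell$ the class $\mathcal A$ extends to $\Br \mathcal U_{\mathbb Z_\ell}$, so evaluation at $P_\ell \in \mathcal U(\mathbb Z_\ell)$ lands in $\Br \mathbb Z_\ell = 0$ and the corresponding invariant vanishes. I would not reprove this finiteness statement from scratch; the excerpt already refers to \cite[Proposition~8.2.1]{Poonen}, and I would simply cite it. Given that, the argument above is essentially a three-line chase, and I expect the main (very mild) obstacle to be making sure the functoriality diagram is set up cleanly enough that the identification $\inv_v \mathcal A(P_v) = \inv_v \mathcal A(P)$ is manifest.
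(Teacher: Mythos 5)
Your argument is correct and is the standard one: a global integral point maps compatibly to an integral adelic point and a $\mathbb Q$-point, functoriality identifies the local evaluations with the localizations of the single class $\mathcal A(P)\in\Br\mathbb Q$, and Albert--Brauer--Hasse--Noether reciprocity kills the sum of invariants. The paper in fact gives no proof of its own and simply cites \cite{CTXu}, so there is nothing to compare against beyond noting that your write-up is the argument that reference (and ultimately Manin's original one for rational points) uses; you are also right that well-definedness of the infinite sum is already delegated to \cite[Proposition~8.2.1]{Poonen} and needs no separate argument here.
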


When $\mathcal A$ is a cyclic algebra we can use Lemma~\ref{lem:trivialcycalgs} to compute the images of the invariant maps and we might gain some information on the set of integral points.

\begin{dfn}
We say that an element~$\mathcal A \in \Br U$ \textit{obstructs the integral Hasse principle} if~$\mathcal U(\mathbb A_{\mathbb Q,\infty})$ is non-empty, but~$\mathcal U(\mathbb A_{\mathbb Q,\infty})^\mathcal A$ is empty.  The \textit{order} of the obstruction is the order of~$\mathcal A$ in~$\Br U/\Br \mathbb Q$.
\end{dfn}

\section{The interesting Galois action}\label{sec:intgalact}

The main goal will be to construct affine schemes $\mathcal U \subseteq \mathbb A^5_{\mathbb Z}$ which have a Brauer--Manin obstruction to the integral Hasse principle. In all our examples we will construct $\mathcal U$ in such a way that $U=\mathcal U_{\mathbb Q}$ is a log K3 surface of $\dP5$\ type. This means that we will be interested in the Brauer group of such surfaces. The following terminology will turn out to be helpful in that regard.

\begin{dfn}
	Let $X$ be a del Pezzo surface of degree $5$ over a field $k$.  Let $K$ be the minimal Galois extension of $k$ over which all $-1$-curves on~$X$ are defined.  We say that $X$ is \textit{interesting} if $[K \colon k] = 5$.  A log K3 surface of $\dP5$\ type $U=X\backslash C$ is called \textit{interesting} if $X$ is an interesting del Pezzo surface and $C$ is geometrically irreducible.
	
	The field $K$ is called the \textit{splitting field} of the interesting surfaces $X$ and $U$.
\end{dfn}

Consider an interesting log K3 surface $U=X\backslash C$.  By definition of a log K3 surface we see that $C$ is smooth. The curve $C$ is also geometrically irreducible since $U$ is interesting.  The results in this paper are also true for the complement of a geometrically irreducible anticanonical curve $C$ on a del Pezzo surface $X$ of degree $5$.  To be able to use the language of log K3 surface we do keep the superfluous condition that $C$ is smooth.

The following lemma shows that an interesting action corresponds to a unique conjugacy class of subgroups of $W_4$.

\begin{lem}\label{lem:anticandivsondp5}
	Consider an interesting del Pezzo surface~$X$ over a field $k$.  The action of~$G_k$ on $\Pic X\sep$ is uniquely determined up to conjugacy.
	
	On an interesting del Pezzo surface there are two Galois orbits of geometric~$-1$-curves, each of size~$5$.  The sum of the~$-1$-curves in one such orbit is an anticanonical divisor.
\end{lem}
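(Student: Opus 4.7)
For the first assertion I would apply Sylow's theorem. By Proposition~\ref{prop:bijectionisomdp5s} the action on $\Pic X\sep$ is encoded by a homomorphism $\psi\colon G_k \to A(X\sep)\cong S_5$, unique up to conjugacy in $S_5$, and the splitting field $K$ is the fixed field of $\ker\psi$. The hypothesis $[K:k]=5$ therefore forces $\psi(G_k)$ to be a Sylow $5$-subgroup of $S_5$. Any two embeddings $\mathbb Z/5 \hookrightarrow S_5$ are conjugate in $S_5$: all Sylow $5$-subgroups are conjugate by Sylow's theorem, and for a fixed Sylow subgroup $H$ the quotient $N_{S_5}(H)/H$ has order $4$ and realises every automorphism of $H$ via conjugation in $S_5$. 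This gives uniqueness of $\psi$ up to conjugacy.

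For the second assertion, fix a $5$-cycle $\sigma$ generating $\psi(G_k)$. Under the identification of the Petersen graph with $\binom{I}{2}$, $I=\{1,\dots,5\}$, recalled in the proof of Lemma~\ref{lem:geomofdp5s}, $\sigma$ acts on the ten geometric $-1$-curves via its action on $2$-subsets. Writing $\sigma=(1\,2\,3\,4\,5)$, a direct check gives exactly two orbits of size $5$: the consecutive pairs $\{i,i+1\}$ and the skip pairs $\{i,i+2\}$ (indices mod $5$). Let $C_1, C_2 \in \Pic X\sep$ denote the orbit sums; both are $G_k$-stable.

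To show $C_1 \sim C_2 \sim -K_X$ I would pass to $\Pic X\sep \otimes \mathbb Q$. This $5$-dimensional $S_5$-module is a quotient of the permutation representation on $\binom{I}{2}$, which by Pieri/Young decomposes as $\chi^{(5)} \oplus \chi^{(4,1)} \oplus \chi^{(3,2)}$; since $K_X$ spans a trivial subrepresentation, the only consistent decomposition of $\Pic X\sep \otimes \mathbb Q$ is $\chi^{(5)} \oplus \chi^{(4,1)}$ (trivial plus standard). Restricted to $\langle\sigma\rangle$, the standard representation is the augmentation representation on $\mathbb Q[\mathbb Z/5]$, whose $\sigma$-eigenvalues are the non-trivial fifth roots of unity; in particular it has no invariants. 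Hence $(\Pic X\sep \otimes \mathbb Q)^{\langle\sigma\rangle} = \mathbb Q \cdot K_X$, so $C_j = \alpha_j(-K_X)$ for some $\alpha_j \in \mathbb Q$.

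The scalars are finally pinned down by two numerical constraints. In the blowup-of-$\mathbb P^2$ model of Lemma~\ref{lem:geomofdp5s}(a)--(c), summing all ten $-1$-classes gives $\sum_i L_i + \sum_{i<j} L_{ij} = 2(3L_0 - L_1 - L_2 - L_3 - L_4) = -2K_X$, so $\alpha_1 + \alpha_2 = 2$. An intersection count in the Kneser graph shows that each consecutive pair is disjoint from exactly one skip pair (for example $\{1,2\}$ is disjoint from $\{3,4\}, \{3,5\}, \{4,5\}$, of which only $\{3,5\}$ is a skip pair), giving $C_1 \cdot C_2 = 5 = K_X^2$ and hence $\alpha_1 \alpha_2 = 1$. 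Together these force $\alpha_1 = \alpha_2 = 1$, so each $C_j$ is an effective anticanonical divisor. The main obstacle is this last identification, since there is no canonical basis of $\Pic X\sep$ compatible with the $S_5$-indexing of the $-1$-curves; the representation-theoretic reduction to rational multiples of $K_X$ sidesteps this and turns the final step into a two-line intersection-number computation.
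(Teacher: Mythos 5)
Your proposal is correct, and it takes a genuinely different route from the paper for both halves of the lemma.

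For the uniqueness of the action, the paper works directly with the Petersen graph: from minimality of the splitting field it deduces that there are two orbits of size $5$, then argues that after changing basis a generator $\sigma$ rotates the outer pentagon counter-clockwise, and since $\sigma$ preserves the intersection form it must also rotate the inner pentagon the same way; this pins down $\sigma$ on all ten $-1$-classes (and hence on $L_0 = L_{12}+L_1+L_2$), and any other choice of basis gives a conjugate action by Lemma~\ref{lem:geomofdp5s}(e). Your Sylow argument is cleaner as an abstract group-theoretic fact — all embeddings $\mathbb Z/5 \hookrightarrow S_5$ are conjugate because the Sylow $5$-subgroups are conjugate and $N_{S_5}(H)/H$ realises the full $\mathrm{Aut}(H)$ — but it gives less: the paper's explicit matrix for $\sigma$ is reused verbatim in the proof of Theorem~\ref{thm:algbrgrplogdp5} to compute $\H^1$, so the hands-on form of the action is needed downstream anyway.

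For the anticanonicity of the orbit sums, the paper simply declares the claim ``immediate'' once the explicit action is in hand (indeed $L_1+L_{12}+L_2+L_{23}+L_{14} = 3L_0-L_1-L_2-L_3-L_4 = -K_X$ is a one-line check in the fixed basis). Your representation-theoretic detour — identifying $\Pic X\sep\otimes\mathbb Q$ as $\chi^{(5)}\oplus\chi^{(4,1)}$ inside the permutation module on $\binom{[5]}{2}$, observing that $\chi^{(4,1)}$ restricted to $\langle\sigma\rangle$ has no invariants, and then solving $\alpha_1+\alpha_2=2$, $\alpha_1\alpha_2=1$ via the global sum $-2K_X$ and the Kneser-graph intersection count $C_1\cdot C_2 = 5 = K_X^2$ — is more machinery for a statement that collapses once a basis is fixed, but it does have the virtue of being basis-free and of explaining \emph{why} the answer must be a multiple of $K_X$ before any computation. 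Both proofs are complete; the paper's is shorter given the groundwork of Lemma~\ref{lem:geomofdp5s}, while yours is more self-contained and conceptually transparent.
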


\begin{proof}
	Let $K$ be the splitting field of $X$.  Since $X$ is interesting the extension $K/k$ is by definition of degree $5$.	It follows from the minimality of~$K$ that~$\Gal(K/k)$ does not fix any of the ten $-1$-curves, hence there must be two orbits of size~$5$.  After choosing a possibly different basis of $\Pic X\sep$ we see that these two orbits are the two regular pentagons in Figure~\ref{fig:petersen} and that there is a~$\sigma \in \Gal(K/k)$ which acts on the outer pentagon by rotating counter-clockwise.  Since $\sigma$ preserves the intersection pairing it will also rotate the inner pentagon counter-clockwise.  This determines the action of~$\sigma$ on the $-1$-classes:
	\[
	L_1 \mapsto L_{12} \mapsto L_2 \mapsto L_{23} \mapsto L_{14} \mapsto L_1,
	\]
	\[
	L_3 \mapsto L_4 \mapsto L_{13} \mapsto L_{34} \mapsto L_{24} \mapsto L_3.
	\]
	This proves that $L_0=L_{12}+L_1+L_2$ gets mapped to~$2L_0-L_1-L_2-L_3$.  For a different choice of such a basis we get a conjugate action of $G_k$ on $\Pic X\sep$ by Lemma~\ref{lem:geomofdp5s}.
	
	The last statement is immediate.
\end{proof}

If we consider the complement $U$ of a geometrically irreducible anticanonical divisor $C$ on a del Pezzo surface of degree $5$ over a number field $k$ we can compute its algebraic Brauer group modulo constants as $\H^1(G_k, \Pic U\sep)$.  The following proposition shows that the action of $G_k$ on $\Pic X\sep$ is interesting precisely when $\Br_1 U/\Br k$ is non-trivial.

\begin{thm}\label{thm:algbrgrplogdp5}
	Let~$U=X\backslash C$ be a log K3 surface of $\dP5$\ type over a number field~$k$ with $C$ geometrically irreducible. We have
	\[
	\Br_1 U/\Br k \cong \begin{cases}
	\mathbb Z/5\mathbb Z & \mbox{ if~$U$ is interesting;}\\ 
	0 & \mbox{ otherwise.}
	\end{cases}
	\]
\end{thm}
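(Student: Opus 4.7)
My plan is to identify $\Br_1 U/\Br k$ with $\H^1(G_k, \Pic \Ubar)$ via the Hochschild--Serre spectral sequence and to compute the latter from the short exact sequence of Galois modules induced by the anticanonical inclusion $[-K_X] = [C]$. Since $C$ is geometrically irreducible and $\Pic \Xbar$ is torsion-free by Lemma~\ref{lem:geomofdp5s}, no nonconstant rational function on $\Xbar$ has divisor supported on $C$, so $\Gm(\Ubar) = \kbar^{\times}$. Hence the spectral sequence gives the desired identification, and since all $-1$-curves on $\Xbar$ are defined over the splitting field $K$, the Galois action on $\Pic \Ubar$ factors through the finite group $G := \Gal(K/k)$; thus it suffices to compute $\H^1(G, \Pic \Ubar)$.

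From the short exact sequence of $G$-modules
\[
0 \to \mathbb{Z}[-K_X] \to \Pic \Xbar \to \Pic \Ubar \to 0
\]
(with trivial $G$-action on the first term, since $C$ is defined over $k$), the long exact sequence combined with $\H^1(G, \mathbb{Z}) = 0$ and $\H^1(G, \Pic \Xbar) = 0$---the latter since quintic del Pezzo surfaces with a rational point are $k$-rational by Enriques, and thus satisfy $\Br X/\Br k = 0$---yields
\[
0 \to \H^1(G, \Pic \Ubar) \to \H^2(G, \mathbb{Z}) \xrightarrow{\delta} \H^2(G, \Pic \Xbar),
\]
where $\delta$ is induced by the inclusion $1 \mapsto -K_X$.

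In the interesting case $G \cong \mathbb{Z}/5$, I would apply Tate periodicity to identify $\H^2(G, \mathbb{Z}) = \mathbb{Z}/5$ and $\H^2(G, \Pic \Xbar) = (\Pic \Xbar)^G / N_G(\Pic \Xbar)$, with $\delta(1)$ equal to the class of $-K_X$ in the latter. By Lemma~\ref{lem:anticandivsondp5}, $-K_X$ is the sum of the five $-1$-curves in one Galois orbit, hence equals $N_G(L)$ for any $-1$-curve $L$ in that orbit; this forces $\delta = 0$, so $\H^1(G, \Pic \Ubar) \cong \mathbb{Z}/5$.

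For the non-interesting case, the image $\bar G \subseteq S_5$ of $G_k$ on the Petersen graph is not cyclic of order five. Unwinding the construction of Proposition~\ref{prop:constructiondp5s}, one checks that $\Pic \Ubar \cong \mathbb{Z}^{5}/\mathbb{Z}(1,\ldots,1)$ as an $S_5$-lattice, with $S_5$ permuting the five blown-up points. The auxiliary sequence $0 \to \mathbb{Z} \to \mathbb{Z}^5 \to \Pic \Ubar \to 0$, together with Shapiro's lemma and the vanishing of $\H^1$ with $\mathbb{Z}$-coefficients for finite groups, then identifies $\H^1(\bar G, \Pic \Ubar)$ with the kernel of the restriction $\H^2(\bar G, \mathbb{Z}) \to \bigoplus_i \H^2(H_i, \mathbb{Z})$, i.e., the group of characters of $\bar G$ vanishing on all point stabilizers $H_i$. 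When $\bar G$ has a fixed point this kernel is trivially zero; the remaining possibilities are the transitive subgroups $D_5, F_{20}, A_5, S_5$ and the non-transitive subgroups with orbit structure $2+3$, in each of which a direct check shows that the point-stabilizer abelianizations surject onto $\bar G^{\mathrm{ab}}$. The main obstacle is the tidy execution of this final case analysis; a uniform cohomological argument would be preferable, but since the list of possible Galois images in $S_5$ is short, the verification is brief.
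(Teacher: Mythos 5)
Your proof is correct, and it takes a genuinely different route from the paper's in both halves of the statement.

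For the interesting case, the paper and you both start from the exact sequence $0 \to \mathbb{Z} \xrightarrow{\,1\mapsto -K_X\,} \Pic \Xbar \to \Pic \Ubar \to 0$, but the paper then writes out the $4\times 4$ matrix of $\sigma$ on $\Pic\Ubar$ in a chosen basis and computes $\ker(1+\sigma+\cdots+\sigma^4)/\Im(1-\sigma)$ explicitly. You instead pass to the long exact sequence, kill $\H^1(G,\mathbb Z)$ and $\H^1(G,\Pic\Xbar)$, and identify $\H^1(G,\Pic\Ubar)$ with $\ker\bigl(\H^2(G,\mathbb Z)\to\H^2(G,\Pic\Xbar)\bigr)$; Tate periodicity and the observation from Lemma~\ref{lem:anticandivsondp5} that $-K_X$ is the norm $N_G(L)$ of a $-1$-curve then show the connecting map vanishes. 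This is cleaner and explains \emph{why} the answer is $\mathbb{Z}/5\mathbb{Z}$: the crucial point is precisely that an anticanonical class is a Galois norm. For the non-interesting case the paper simply cites the remark in~\cite{BrightLyczak}; you instead prove it directly by identifying $\Pic\Ubar$ with the $S_5$-lattice $\mathbb{Z}^5/\mathbb{Z}(1,\ldots,1)$ (which follows from the construction in Proposition~\ref{prop:constructiondp5s}, since $\Pic\Ubar \cong (\mathbb{Z}H\oplus\bigoplus\mathbb{Z}E_i)/\langle H,\sum E_i\rangle$), applying Shapiro's lemma to the auxiliary sequence $0\to\mathbb Z\to\mathbb Z^5\to\Pic\Ubar\to 0$, and reducing to a question about characters of the image $\bar G\subseteq S_5$ vanishing on point stabilizers. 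Incidentally, once you have this lattice identification, the interesting case ($\bar G = C_5$) also falls out immediately since $\mathbb{Z}^5 = \mathbb{Z}[C_5]$ is cohomologically trivial, so your two arguments overlap.

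One small wrinkle in the final case analysis: the claim that ``the point-stabilizer abelianizations surject onto $\bar{G}^{\mathrm{ab}}$'' is not literally true for every $\bar{G}$ with orbit type $2+3$. For instance $\bar{G}\cong C_6$ generated by a product of a transposition and a $3$-cycle has stabilizers of orders $3$ and $2$, neither of which surjects onto $\bar{G}^{\mathrm{ab}}\cong\mathbb{Z}/6$. What is true, and suffices, is that the stabilizers from all orbits \emph{jointly} generate $\bar{G}^{\mathrm{ab}}$: a character killed by the index-$2$ stabilizer has order dividing $2$ and one killed by the index-$3$ stabilizer has order dividing $3$, so a character killed by both is trivial. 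The transitive cases $D_5, F_{20}, A_5, S_5$ you checked correctly, so the conclusion stands; just rephrase the $2+3$ case to use both stabilizers together rather than asserting surjectivity for each.

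You should also make explicit the two inflation steps you use silently: that $\H^1(\Gal(K/k),\Pic U_K)\xrightarrow{\sim}\H^1(G_k,\Pic\Ubar)$ and $\H^1(\Gal(K/k),\Pic X_K)\hookrightarrow\H^1(G_k,\Pic\Xbar)$ via inflation--restriction, both valid because the respective Picard lattices are torsion-free so there are no nonzero homomorphisms from the profinite group $G_K$ into them. The paper records the first of these and you should too.
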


\begin{proof}
	It was mentioned in \cite[Remark at the end of Section~2.1]{BrightLyczak} that the algebraic Brauer group modulo constants of log K3 surfaces of $\dP5$ type with a geometrically irreducible anticanonical divisor $C$ is trivial except for one specific action of the Galois group on the geometric Picard group. So it suffices to verify the statement for interesting del Pezzo surfaces over $k$.  We will fix a basis $(L_0,L_1,L_2,L_3, L_4)$ of $\Pic \Xbar$ as in the proof of Lemma~\ref{lem:anticandivsondp5}.
	
	Since~$C \subseteq X$ is geometrically irreducible we find the following exact sequence of Galois modules
	\[
	0 \to \mathbb Z \stackrel{j}\to \Pic \Xbar \to \Pic \Ubar \to 0,
	\]
	where $j$ maps~$n$ to~$-nK_X$.  This shows that~$\Pic \Ubar \cong \Pic \Xbar/\mathbb Z C \cong \mathbb Z^4$, since the anticanonical divisor class~$-K_X=3L_0-L_1-L_2-L_3-L_4$ is primitive.  So~$\Pic \Ubar$ is torsion free and from the inflation--restriction sequence we conclude that the inflation homomorphism induces an isomorphism
	\[
	\H^1(\Gal(K/k),\Pic U_K) \xrightarrow{\text{inf}} \H^1(G_k,\Pic \Ubar).
	\]
	
	We will compute the action of $\sigma$ on the quotient $\Pic \Ubar$ of $\Pic \Xbar$ using the specific action of $\sigma$ on $\Pic \Xbar$ in Lemma~\ref{lem:anticandivsondp5}.  The classes~$[L_0]$,~$[L_1]$,~$[L_2]$ and~$[L_3]$ in~$\Pic U_K$ form a basis and in this basis the class of~$L_4$ becomes~$[L_4]=3[L_0]-[L_1]-[L_2]-[L_3]$.  So~$\sigma$ acts on~$\Pic \Ubar$ as
	\[
	\sigma = \left(
	\begin{array}{cccc}
	2 & 1 & 1 & 3\\
	-1 & -1 & 0 & -1\\
	-1 & -1 & -1 & -1\\
	-1 & 0 & -1 & -1
	\end{array}
	\right)
	\]
	By results on group cohomology of cyclic groups \cite[Theorem~6.2.2]{Weibel} we get
	\[
	\H^1(G,\Pic \Ubar) \cong \ker(1+\sigma+\sigma^2+\sigma^3+\sigma^4)/\Im(1-\sigma).
	\]
	Since~$1+\sigma+\sigma^2+\sigma^3+\sigma^4 = 0$ and the image of~$1-\sigma$
	is generated by~$(1,0,0,2)$, $(0,1,0,4)$, $(0,0,1,4)$ and~$(0,0,0,5)$ we find
	\[
	\Br_1 U/\Br k \cong \mathbb Z/5\mathbb Z.\qedhere
	\]
\end{proof}

Consider an interesting log K3 surface $U$.  On the compactification $X$ of $U$ we have three important effective anticanonical divisors.  First of all $C=X\backslash U$, but also the two divisors supported on $-1$-curves as described in Lemma~\ref{lem:anticandivsondp5}. These anticanonical sections are important enough to introduce some notation.

\begin{dfn}\label{dfn:l1andl2}
	Let $X$ be an interesting del Pezzo surface of degree $5$ over a field $k$.  Let $l_1,l_2 \in \H^0(X,\omega_X^\vee)$ be the anticanonical sections supported on $-1$-curves from Lemma~\ref{lem:anticandivsondp5}.
\end{dfn}

We will use these elements to construct explicit generators of~$\Br_1 U/\Br k$ for an interesting log K3 surface $U=X\backslash C$.

\begin{lem}\label{lem:genofalgbrgrp}
	Let $K$ be the splitting field of an interesting log K3 surface $U=X \backslash C$ over a number field~$k$. Fix a generator~$\sigma$ of $\Gal(K/k)\cong \mathbb Z/5\mathbb Z$.  Let $h \in \H^0(X,\omega_X^\vee)$ be a global section whose divisor of zeroes is $C$.
	
	The cyclic $\kappa(X)$-algebras
	\[
	\left(\frac{l_1}h, \sigma\right) \quad \text{ and } \quad \left(\frac{l_2}h, \sigma\right)
	\]
	are similar over $\kappa(X)$, their class lies in the subgroup~$\Br U \subseteq \Br \kappa(X)$ and generates~$\Br_1 U/\Br k$.
\end{lem}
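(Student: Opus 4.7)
My plan is to verify the three assertions of the lemma in turn, using the divisor-theoretic criterion of Lemma~\ref{lem:cyclicalgebraoverscheme} together with the computation performed in the proof of Theorem~\ref{thm:algbrgrplogdp5}.

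First, I would show that each cyclic algebra $\mathcal A_i := (l_i/h,\sigma)$ lies in $\Br U$ by applying Lemma~\ref{lem:cyclicalgebraoverscheme}. Let $D_i$ denote the divisor of zeros of $l_i$ on $X$, so that by Lemma~\ref{lem:anticandivsondp5} $D_i$ is the sum of the five $-1$-curves in the $i$-th Galois orbit, which become individually defined over the splitting field $K$ and are cyclically permuted by $\sigma$. Pick one such curve $L^{(i)}$ in the $i$-th orbit. Since $\div_X(l_i/h)=D_i-C$ and $C$ is removed in passing to $U$, one has $\div_U(l_i/h)=D_i|_U=\Norm_{K/k}\bigl(L^{(i)}|_{U_K}\bigr)$, verifying the criterion with $D=L^{(i)}|_{U_K}$.

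For similarity, I would apply the same lemma to the algebra $(l_1/l_2,\sigma)$, now viewed directly on $X$. Its divisor $D_1-D_2=\Norm_{K/k}\bigl(L^{(1)}-L^{(2)}\bigr)$ is again a norm, so $(l_1/l_2,\sigma)\in\Br X$. But $X$ is a del Pezzo surface of degree $5$, hence $k$-rational, and so $\Br X=\Br k$ as recalled in the introduction. Thus $\mathcal A_1-\mathcal A_2\in\Br k$, and after rescaling $l_1$ (or equivalently $l_2$) by a suitable constant in $k^\times$—which alters $\mathcal A_i$ only by an element of $\Br k$—the two algebras agree as elements of $\Br\kappa(X)$, giving the common class asserted by the lemma.

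For generation, I would trace this common class through the chain of isomorphisms $\Br_1 U/\Br k\cong\H^1(G_k,\Pic\Ubar)\cong\ker N/\mathrm{Im}(1-\sigma)\cong\mathbb Z/5\mathbb Z$ built in the proof of Theorem~\ref{thm:algbrgrplogdp5}. Under the standard identification of cyclic algebras with $1$-cocycles, $\mathcal A_1$ corresponds to the cocycle $\sigma\mapsto[L^{(1)}]\in\Pic\Ubar$. Taking $L^{(1)}=L_1$, in the basis $([L_0],[L_1],[L_2],[L_3])$ of $\Pic\Ubar$ fixed in that proof one has $[L_1]=(0,1,0,0)$, and reducing modulo the generators $(1,0,0,2)$, $(0,1,0,4)$, $(0,0,1,4)$, $(0,0,0,5)$ of $\mathrm{Im}(1-\sigma)$ computed there gives $[L_1]\equiv(0,0,0,1)$, which is a generator of the quotient $\mathbb Z/5\mathbb Z$.

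The main obstacle is making precise, in the third step, the correspondence between cyclic algebras in $\Br U$ and cohomology classes in $\H^1(G_k,\Pic\Ubar)$: one needs the connecting map from Hochschild--Serre to match the divisor $D=L^{(i)}|_{U_K}$ produced by Lemma~\ref{lem:cyclicalgebraoverscheme} with the value of the cocycle on $\sigma$. This is a standard diagram-chase but must be set up carefully; once done, every remaining step is a direct computation with the divisors $D_i$ and the explicit action of $\sigma$ on $\Pic\Ubar$ from Lemma~\ref{lem:anticandivsondp5}.
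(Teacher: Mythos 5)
Your proposal is correct, but for two of the three assertions you take a genuinely different path from the paper.

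For membership in $\Br U$ your argument coincides with the paper's: apply Lemma~\ref{lem:cyclicalgebraoverscheme} with $D$ the restriction to $U_K$ of a single $-1$-curve in the relevant Galois orbit.

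For similarity, the paper also applies Lemma~\ref{lem:cyclicalgebraoverscheme} to $(l_1/l_2,\sigma)$, but to conclude \emph{constancy} directly on $U$ it produces an explicit \emph{principal} divisor on $U_K$ whose norm is $\div_U(l_1/l_2)$, using the linear equivalence $L_{14}+L_1-L_2\sim L_{24}$ on $X$. You instead observe that $\div_X(l_1/l_2)=\Norm_{K/k}(L^{(1)}-L^{(2)})$ is a norm (of a not-necessarily-principal divisor), place the class in $\Br X$, and invoke $\Br X=\Br k$ from the $k$-rationality of quintic del Pezzo surfaces. Your route is a pleasant shortcut given that the paper already records $\Br X/\Br k=0$; the paper's route stays entirely within the single criterion of Lemma~\ref{lem:cyclicalgebraoverscheme}, avoiding the appeal to rationality and making the argument self-contained. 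Note that in both treatments ``similar over $\kappa(X)$'' really only holds up to an element of $\Br k$ (the $l_i$ are defined only up to $k^\times$-scaling), which is all that matters for generating $\Br_1 U/\Br k$; you make this explicit, and the paper suppresses it.

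For generation, the two proofs diverge most. The paper argues by contradiction entirely through Lemma~\ref{lem:cyclicalgebraoverscheme} and intersection theory: if the class were constant there would be a \emph{principal} divisor $D$ on $U_K$ with $\Norm_{K/k}D=\div_U l_1$; extending a generating function to $X_K$ gives $\div_{X_K}g=D+mC$, and taking intersection with $K_{X_K}$ yields $0=-1+5m$, impossible. Your plan instead computes the image of $\mathcal A_1$ in $\H^1(\Gal(K/k),\Pic U_K)=\ker N/\Im(1-\sigma)$ and checks that $[L_1]\equiv(0,0,0,1)$ modulo the lattice $\Im(1-\sigma)$ from the proof of Theorem~\ref{thm:algbrgrplogdp5}. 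The numerical computation is correct. The one thing I would push back on is your assessment of the ``main obstacle'': you do not actually need to set up the full Hochschild--Serre identification of $\mathcal A_1$ with a $1$-cocycle. Lemma~\ref{lem:cyclicalgebraoverscheme} already tells you that constancy forces some $D$ with $\Norm_{K/k}D=\div_U l_1$ to be principal, and since all such $D$ lie in $L^{(1)}+(1-\sigma)\,\mathrm{Div}(U_K)$, this is exactly the condition $[L^{(1)}]\in\Im(1-\sigma)$ in $\Pic U_K$, which your lattice computation refutes. So the ``careful diagram chase'' you flag can be replaced by the same lemma the paper uses, at which point your cohomological computation and the paper's intersection-theoretic one become two ways of verifying the same non-principality. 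The paper's is shorter and coordinate-free; yours reuses the explicit matrix for $\sigma$ already computed and pins down which generator of $\mathbb Z/5\mathbb Z$ one gets.
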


\begin{proof}
	As~$\div_U(\frac{l_1}h)$ and~~$\div_U(\frac{l_2}h)$ are orbits of~$-1$-curves defined over~$K$ it follows from Lemma~\ref{lem:cyclicalgebraoverscheme} that the cyclic algebras lie in the subgroup $\Br U$.  The algebras $\left(\frac{l_1}h, \sigma\right) \otimes \left(\frac{l_2}h, \sigma\right)\opp$ and $\left(\frac{l_1}{l_2}, \sigma\right)$ are similar and $\div_U(\frac{l_1}{l_2})$ is the norm of a principal divisor on $U$ since this is even the case on $X$.  Indeed, the divisors $L_{14}+L_1-L_2$ and $L_{24}$ are linearly equivalent on $X$, and their norms $\Norm_{K/k}(L_{14}+L_1-L_2)$ and $\Norm_{K/k}(L_{24})$ are the divisors of zeroes of $l_1$ and $l_2$. It follows again from Lemma~\ref{lem:cyclicalgebraoverscheme} that $\left(\frac{l_1}{l_2}, \sigma\right)$ is trivial in~$\Br U$.
	
	The algebra $\mathcal A$ is split by the degree $5$ extension~$K$ and this implies that $\mathcal A$ is either trivial or of order~$5$.  Suppose that the class of $\mathcal A$ is trivial, then by Lemma~\ref{lem:cyclicalgebraoverscheme} there is a principal divisor $D$ on~$U_K$ such that $\Norm_{K/k} D = \div_U l_1$. This implies that there is a $g\in \kappa(U_K)$ such that $\div_{U_K} g=D$.  Consider $g$ as a function on $X_K$ and $D$ as a divisor on $X_K$.  Then $\div_{X_K} g=D+nC$ for some non-negative integer $n$, since $C$ is geometrically irreducible. From $\Norm_{K/k} D = \div_U l_1$ we find $K_{X_K}\cdot D = -1$ and we conclude that
	\[
	0=K_{X_K} \cdot \div_{X_K} g=K_{X_K}\cdot D+nK_{X_K} \cdot C=-1+5n,
	\]
	which is a contradiction.  
\end{proof}

Note that $l_1$ and $l_2$ are only defined up to multiplication by an element in $k^\times$.  From now on we will denote the class in Lemma~\ref{lem:genofalgbrgrp} by~$\mathcal A \in \Br_1(U)$ which is uniquely defined up to an element in $\Br k$.  Fix for the moment an interesting del Pezzo surface $X$.  We will consider the class $\mathcal A_h$ on $U_h$ as $h$ varies over all hyperplane sections.  We have seen that $\mathcal A_h$ is of order~$5$ if~$h$ cuts out a geometrically irreducible curve.  The next lemma shows that this only fails for specific choices of~$h$.

\begin{lem}\label{lem:geomirrdiv}
	Let~$X \subseteq \mathbb P^5_k$ be an interesting del Pezzo surface over a field~$k$.  A hyperplane section given by the vanishing of an~$h\in \H^0(X,\mathcal O(1))$ fails to be geometrically irreducible if and only if $h$ is a scalar multiple of either~$l_1$ or~$l_2$.
\end{lem}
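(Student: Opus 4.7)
The plan is as follows. The ``only if'' direction is immediate from Lemma~\ref{lem:anticandivsondp5}: the divisors $\div(l_1)$ and $\div(l_2)$ are each the sum of five distinct $-1$-curves over $\bar k$, hence geometrically reducible, and scaling a section does not alter its vanishing divisor.

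For the ``if'' direction, the key preliminary step is to identify $\Pic X$ exactly. Since every quintic del Pezzo surface carries a $k$-rational point by the classical theorem of Enriques, we have $\Pic X = (\Pic \bar X)^{G_k}$. The interesting Galois action factors through a cyclic group of order $5$ generated by some $\sigma \in A(\bar X) \cong S_5$. On $\Pic \bar X \otimes_{\mathbb Z} \mathbb Q \cong \mathbb Q^5$ the minimal polynomial of $\sigma$ divides $x^5 - 1 = (x-1)\Phi_5(x)$ with $\Phi_5$ irreducible of degree $4$ over $\mathbb Q$; the only decomposition $\mathbb Q^a \oplus \mathbb Q(\zeta_5)^b$ (with $a + 4b = 5$) compatible with $\sigma \neq 1$ is $(a,b) = (1,1)$, so the rational invariants are one-dimensional. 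Since $-K_X = 3L_0 - L_1 - L_2 - L_3 - L_4$ is both Galois-invariant and primitive in $\Pic \bar X$, this forces $\Pic X = \mathbb Z \cdot [-K_X]$.

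With that in hand I would decompose any geometrically reducible hyperplane section $D = \{h = 0\}$ according to the Galois orbits of its geometric irreducible components: writing $D = \sum_O n_O D_O$ with $D_O$ the orbit sum (which is $k$-rational) and $n_O \ge 1$ the common multiplicity, each class satisfies $[D_O] = c_O \cdot [-K_X]$ with $c_O \ge 1$, since $-K_X$ is ample and $D_O$ is effective and nonzero. The equation $\sum_O n_O c_O = 1$ then forces exactly one orbit with $n_O = c_O = 1$. Geometric reducibility excludes $|O| = 1$, so $|O| = 5$, and $-K_X \cdot D_O = 5$ together with $|O| = 5$ gives $-K_X \cdot F = 1$ for every component $F \in O$. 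Such an $F$ is a line in the anticanonical embedding $X \subseteq \mathbb P^5$, hence a geometric $-1$-curve. By Lemma~\ref{lem:anticandivsondp5} the interesting Galois action has precisely two orbits of five $-1$-curves, so $D$ equals either $\div(l_1)$ or $\div(l_2)$, and $h$ must be a $k$-scalar multiple of $l_1$ or $l_2$.

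The main obstacle is the rank $1$ identification of $\Pic X$; once that is available, the remaining orbit bookkeeping and degree count are routine.
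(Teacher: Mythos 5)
Your proof is correct, but it takes a genuinely different route from the paper's. The paper's argument is purely intersection-theoretic and does not pass through the Picard group of $X$ over $k$: for any $k$-irreducible component $D$ of a hyperplane section $C$, Galois-invariance of the divisor $D^{\textup{sep}}$ gives $L\cdot D^{\textup{sep}} = \sigma(L)\cdot D^{\textup{sep}}$ for every $-1$-curve $L$, so $-K_X\cdot D = 5\,L\cdot D^{\textup{sep}}$ is a positive multiple of $5$ bounded above by $\deg C = 5$; this forces $-K_X\cdot D = 5$, hence $C = D$ is $k$-irreducible, and a short degree count then shows that a geometrically reducible $C$ must be a Galois orbit of $-1$-curves. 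You instead first compute $\Pic X = \mathbb{Z}\cdot[-K_X]$, combining Enriques' theorem (so that $\Pic X = (\Pic\Xbar)^{G_k}$) with the representation theory of $\mathbb{Z}/5\mathbb{Z}$ over $\mathbb{Q}$, and then do orbit bookkeeping relative to this rank-one lattice. Both routes are valid, and your identification of $\Pic X$ is a clean structural fact that is useful in its own right. The trade-off is that your argument invokes Enriques' theorem and the full Picard-group computation, whereas the paper's proof is more self-contained and only uses the Galois-equivariance of intersection numbers together with the observation that the orbit sum of a $-1$-curve is anticanonical — which is already built into the definition of $l_1$ and $l_2$. One small point worth making explicit in your write-up: the inference from $-K_X\cdot F = 1$ to ``$F$ is a $-1$-curve'' goes through $F$ being a line in $\mathbb{P}^5$, hence $F\cong\mathbb{P}^1$, hence $F^2 = -1$ by adjunction; this is correct but deserves a sentence.
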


\begin{proof}
	Consider a hyperplane section~$C \subseteq X$.  Let~$D$ be a $k$-irreducible component of $C$ and consider a $-1$-curve~$L$ on $X\sep$.  It follows that~$L \cdot D\sep=\sigma(L) \cdot D\sep$ and as the Galois orbit of~$L$ is an anticanonical divisor, we find
	\[
	5 \geq -K_X \cdot D = 5L \cdot D\sep >0,
	\]
	since the degree of $D \subseteq \mathbb P^5_k$ is positive and at most the degree of $C$, which equals~$5$.  This proves that $L \cdot D\sep=1$ for all $-1$-curves $L$ and hence $C-D$ is an effective divisor of degree~$0$.  We conclude that $C=D$ and this proves that any anticanonical section $C$ is irreducible over $k$.
	
	If $C$ is not geometrically irreducible, then it must have at least two geometrically irreducible components of the same degree $d$ since the Galois group acts on the set of geometrically irreducible components of $C$.  Since $C$ is of degree $5$ we find $2d \leq 5$ and hence $d$ is either $1$ or $2$.  But in both cases we see that $C$ contains a geometrically irreducible curve of degree $1$, which must be a $-1$-curve $L$.  Then $C$ also contains all conjugates of $L$ and hence $C$ is the Galois orbit of a $-1$-curve.  This proves that $C$ is defined by the vanishing of either $l_1$ or $l_2$.
\end{proof}

So we have seen that a log K3 surface $U=X\backslash C$ of $\dP5$ type over a number field $k$ with $C$ geometrically integral has a non-trivial algebraic Brauer group modulo constants precisely for one action of Galois on the lines. We can use the correspondence in Proposition~\ref{prop:bijectionisomdp5s} to classify interesting del Pezzo surfaces over a field $k$.

\begin{prop}
	Let $k$ be a field with a fixed separable closure $k\sep$.  The map which sends an isomorphism class of interesting del Pezzo surfaces over $k$ to its splitting field $K \subseteq k\sep$ is a bijection to the set of degree $5$ Galois extensions of $k$ contained in $k\sep$.
\end{prop}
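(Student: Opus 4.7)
The plan is to combine Proposition~\ref{prop:bijectionisomdp5s}, Lemma~\ref{lem:anticandivsondp5}, and elementary group theory inside $S_5$. Proposition~\ref{prop:bijectionisomdp5s} identifies isomorphism classes of quintic del Pezzo surfaces over $k$ with $S_5$-conjugacy classes of homomorphisms $\psi \colon G_k \to S_5$, in such a way that the splitting field of the surface is precisely the fixed field of $\ker\psi$.

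By Lemma~\ref{lem:anticandivsondp5}, a quintic del Pezzo surface is interesting precisely when the image of the associated $\psi$ has order $5$; in that case $\psi$ factors as $G_k \twoheadrightarrow \Gal(K/k) \hookrightarrow S_5$ for an automatically cyclic degree $5$ Galois extension $K/k$. Thus, under the correspondence of Proposition~\ref{prop:bijectionisomdp5s}, I only need to show that the map $\psi \mapsto (k\sep)^{\ker\psi}$ is a bijection from the set of $S_5$-conjugacy classes of homomorphisms $G_k \to S_5$ with image of order $5$ onto the set of degree $5$ Galois subextensions of $k\sep/k$.

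This last step rests on two group-theoretic facts: any two subgroups of order $5$ in $S_5$ are conjugate (they are the Sylow $5$-subgroups), and for one (hence every) such subgroup $C_5 \subseteq S_5$, the normaliser has order $20$, so that $N_{S_5}(C_5)/C_5 \cong \mathrm{Aut}(C_5)$. Consequently any two injective homomorphisms $C_5 \hookrightarrow S_5$ are $S_5$-conjugate, so an $S_5$-conjugacy class of $\psi$ with image of order $5$ is the same data as an open normal subgroup of $G_k$ of index $5$; by Galois theory these correspond bijectively to degree $5$ Galois subextensions of $k\sep$. The only nontrivial step is the normaliser computation for $C_5 \subseteq S_5$, which is a routine count of $5$-cycles; everything else is just unwinding the correspondences already established.
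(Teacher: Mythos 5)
The paper states this proposition without an explicit proof, treating it as a direct consequence of the earlier material; your write-up supplies exactly the argument that is implicit, so your proposal is correct. Both you and the paper reduce the statement, via Proposition~\ref{prop:bijectionisomdp5s}, to the purely group-theoretic claim that all homomorphisms $G_k \to S_5$ with image of order $5$ and a common kernel form a single $S_5$-conjugacy class (plus the surjectivity supplied automatically by that bijection). Where you differ from the paper is in how this group-theoretic input is established. You invoke Sylow theory: all order-$5$ subgroups of $S_5$ are conjugate, the centraliser of a $5$-cycle is the cyclic group it generates, and the normaliser has order $20$, so $N_{S_5}(C_5)/C_5 \cong \mathrm{Aut}(C_5)$ and every automorphism of $C_5$ is induced by conjugation. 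The paper instead packages this inside Lemma~\ref{lem:anticandivsondp5}, where the corresponding uniqueness-up-to-conjugacy is proved geometrically by putting the two Galois orbits of $-1$-curves into the standard inner/outer pentagon position on the Petersen graph and observing that a generator of $\Gal(K/k)$ is then forced to rotate both pentagons coherently. Your version is a shorter, self-contained piece of group theory and makes the role of the normaliser explicit; the paper's version is more visual and simultaneously produces the explicit matrix for $\sigma$ on $\Pic\Ubar$ that is needed in the proof of Theorem~\ref{thm:algbrgrplogdp5}. Either is perfectly adequate, and a reader familiar with one should have no trouble translating to the other.
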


\begin{dfn}
	Let $K/k$ be a Galois extension of degree $5$.  The isomorphism class of interesting del Pezzo surfaces of degree $5$ over $k$ which are split by $K$ is denoted by $\dP5(K)$.
\end{dfn}

To obtain equations for an interesting del Pezzo surface given its splitting field we can use Proposition~\ref{prop:constructiondp5s}. We can also use this to recover the anticanonical sections $l_1$ and $l_2$; in the notation of that proposition, let $\Lambda_{i,j}$ be the line through the points $P_i$ and $P_j$, where we consider the indexes modulo $5$. The divisors $\sum \Lambda_{i,i+1}$ and $\sum \Lambda_{i,i+2}$ are defined over $k$ and lie in the linear system $\mathcal L$. These are the only divisors in $\mathcal L$ supported on lines and correspond to $l_1$ and $l_2$ on $X$.

\section{A model of $\dP5(\mathbb Q(\zeta_{11})^+)$ over the integers}

We have seen that all interesting del Pezzo surfaces split by a specific quintic extension $K$ of the base field $k$ are isomorphic. We have also seen how to construct such a surface as the image of a rational map $\mathbb P_k^2 \dashrightarrow \mathbb P_k^5$. We will now give an explicit first example of how one can construct models of this surface.

We will use the quintic extension $K=\mathbb Q(\alpha)$ of $k=\mathbb Q$ where $\alpha = \zeta_{11}+\zeta_{11}^{-1}$. We will write $m_\alpha$ for the minimal polynomial of $\alpha$ over $\mathbb Q$. Let $\alpha_i$ be the conjugates of $\alpha$.

\begin{dfn}
Let $\mathcal Q \subseteq \mathbb Z[x,y,z]_{(5)}$ be the sub-$\mathbb Z$-module consisting of all quintic polynomials which vanish at least twice at the points $P_i = (\alpha_i^2 \colon \alpha_i \colon 1) \in \mathbb P^2_{\mathbb Q}$.
\end{dfn}

\begin{lem}
The $\mathbb Z$-module $\mathcal Q$ is free of rank $6$ and $\mathbb Z[x,y,z]_{(5)}/\mathcal Q$ is torsion free.
\end{lem}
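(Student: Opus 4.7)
The plan is to reduce both assertions to two observations: first, that $\mathcal Q$ is a saturated $\mathbb Z$-submodule of $\mathbb Z[x,y,z]_{(5)}$; second, that the rational vector space $\mathcal Q \otimes_{\mathbb Z} \mathbb Q$ has dimension~$6$. Subgroups of finitely generated free abelian groups are automatically free, so $\mathcal Q$ will then be free of rank equal to $\dim_{\mathbb Q}(\mathcal Q \otimes_{\mathbb Z} \mathbb Q) = 6$. Moreover, saturation is exactly the statement that $\mathbb Z[x,y,z]_{(5)} / \mathcal Q$ is torsion free.

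Saturation comes essentially for free from the definition. Suppose $f \in \mathbb Z[x,y,z]_{(5)}$ and $nf \in \mathcal Q$ for some positive integer~$n$. Then for each~$i$ the conditions $(nf)(P_i) = 0$, $\partial_x(nf)(P_i) = 0$ and $\partial_y(nf)(P_i) = 0$ hold in the field $\mathbb Q(\alpha)$. Dividing by the invertible scalar~$n$ shows the same conditions hold for $f$, so $f \in \mathcal Q$. This is the only place where one really uses that $\mathbb Q(\alpha)$ is a field of characteristic zero.

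The actual content is the dimension count, and I would handle it by invoking Proposition~\ref{prop:constructiondp5s}(a) rather than by a direct calculation. That proposition states that for five points in $\mathbb P^2_{\mathbb Q}(\overline{\mathbb Q})$, no three collinear, the linear system $|\mathcal O_{\mathbb P^2_{\mathbb Q}}(5\Lambda - 2P_1 - \cdots - 2P_5)|$ has dimension~$5$, i.e.\ the corresponding $\mathbb Q$-vector space of sections has dimension~$6$. This space of sections is exactly $\mathcal Q \otimes_{\mathbb Z} \mathbb Q$, because the Galois-stable vanishing conditions descend to $\mathbb Q$. The only hypothesis left to verify is that no three of the points $P_i = (\alpha_i^2 : \alpha_i : 1)$ are collinear; this is immediate because all five points lie on the smooth conic $y^2 = xz$, and by B\'ezout any line meets such a conic in at most two points.

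In principle the delicate step would be establishing the expected codimension of the $15$ naive vanishing conditions, but Proposition~\ref{prop:constructiondp5s}(a) already packages this, so the lemma reduces to the trivial saturation argument together with one line of conic geometry. I therefore expect no substantive obstacle.
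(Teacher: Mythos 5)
Your proof is correct and follows essentially the same route as the paper: freeness of $\mathcal Q$ as a submodule of the free module $\mathbb Z[x,y,z]_{(5)}$, the rank count via the del Pezzo dimension result from \cite{dP5s}, and torsion-freeness of the quotient by the observation that scaling a quintic by a nonzero integer does not change whether it satisfies the (characteristic-zero) vanishing conditions. You additionally make explicit the hypothesis check that no three of the $P_i = (\alpha_i^2 : \alpha_i : 1)$ are collinear, by placing them on the smooth conic $y^2 = xz$ and invoking B\'ezout; this is a nice detail that the paper leaves implicit in its citation.
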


\begin{proof}
Clearly $\mathcal Q$ is a free $\mathbb Z$-module. To compute its rank we use the results in \cite{dP5s} which says that $\mathcal Q \otimes \mathbb Q[x,y,z]$ has dimension $6$. The last statement follows from the fact that for $\lambda \in \mathbb Z\backslash \{0\}$ and $q \in \mathbb Z[x,y,z]_{(5)}$ we have $\lambda q \in \mathcal Q$ precisely if $q \in \mathcal Q$.
\end{proof}

Let us fix a basis $q_i \in \mathcal Q$.

\begin{dfn}
Let $\mathcal X \subseteq \mathbb P^5_{\mathbb Z}$ be the image of the rational map
$\mathbb P^2_{\mathbb Z} \dashrightarrow \mathbb P^5_{\mathbb Z}$ defined by the $q_i$.

There are two primitive elements of $\mathcal Q$ which factor into linear polynomials over $\bar{\mathbb Q}$. These correspond to the two primitive linear forms $l_1,l_2 \in \mathbb Z[u_0,u_1,\ldots, u_5]$. 
\end{dfn}

Note that the scheme $\mathcal X$ does not depend on the choice of basis of $\mathcal Q$. It does however depend on the choice of $\alpha$. The statements are easier, but not by much, since we have chosen an integral $\alpha$; we could have picked any generator of $K$ over $\mathbb Q$.

\begin{prop}\label{prop:equationsXXdP511}
The scheme $\mathcal X \subseteq \mathbb P^5_{\mathbb Z}$ is given by the equations\\[-.6cm]
\begin{multline*}
u_0u_3+22u_0u_4+121u_0u_5-u_1^2-121u_1u_3+2662u_1u_4-36355u_2u_4-\\
9306u_2u_5+10494u_3u_4-242u_3u_5-215501u_4^2+68123u_4u_5-13794u_5^2,
\end{multline*}\\[-1.2cm]
\begin{multline*}
u_0u_4+11u_0u_5-u_1u_2-11u_1u_3+242u_1u_4-3223u_2u_4-847u_2u_5+\\
902u_3u_4-11u_3u_5-19272u_4^2+6413u_4u_5-1331u_5^2,
\end{multline*}\\[-1.2cm]
\begin{multline*}
u_0u_5-u_1u_3+22u_1u_4-u_2^2-286u_2u_4-77u_2u_5+77u_3u_4-
1694u_4^2+572u_4u_5-121u_5^2,
\end{multline*}\\[-1.2cm]
\begin{multline*}
u_1u_4-u_2u_3-11u_2u_4-77u_4^2+55u_4u_5-11u_5^2,\\
\end{multline*}\\[-1.7cm]
\begin{multline*}
u_1u_5-u_2u_4-11u_2u_5-u_3^2+11u_3u_4-44u_4^2.\\
\end{multline*}\\[-0.8cm]
In this example, the two relevant hyperplane sections are given by
\[
l_1 = u_0 + 22u_1 - 363u_2 + 165u_3 - 1859u_4 + 484u_5,
\]
\[
l_2 = u_0 + 22u_1 - 352u_2 + 143u_3 - 1595u_4 + 363u_5.
\]
Also,
\begin{enumerate}
\item the generic fibre $X = \mathcal X_{\mathbb Q}$ is isomorphic to $\dP5(K)$, and
\item $\mathcal X$ is the flat closure of $X$ in $\mathbb P^5_{\mathbb Z}$.
\end{enumerate}
\end{prop}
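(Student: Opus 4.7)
The plan is to make explicit a $\mathbb Z$-basis $q_0, \ldots, q_5$ of $\mathcal Q$, compute the scheme-theoretic image of the induced rational map $\mathbb P^2_{\mathbb Z} \dashrightarrow \mathbb P^5_{\mathbb Z}$, and verify that this image is cut out by the five displayed quadrics. Once that is done, parts (1) and (2) and the formulae for $l_1$ and $l_2$ follow with little extra work.

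For the basis I would first recall that a ternary quintic has $\binom{7}{2}=21$ coefficients and that vanishing to order $\geq 2$ at a single point imposes $3$ independent linear conditions. Since the orbit $\{P_1,\ldots,P_5\}$ lies in $\mathbb P^2(\mathbb Z[\alpha])$ and is $G_{\mathbb Q}$-stable, the associated $15$ conditions become an integer linear system after taking traces from $K$ to $\mathbb Q$. The preceding lemma says this system has rank $15$, so Hermite normal form produces an explicit primitive $\mathbb Z$-basis $q_0, \ldots, q_5 \in \mathbb Z[x,y,z]_{(5)}$.

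Given such a basis, the equations can be established in two complementary moves. First, substitute $u_i=q_i(x,y,z)$ into each of the five displayed quadrics and check the identity in $\mathbb Z[x,y,z]$; this proves that the ideal they generate is contained in the ideal of the image over $\mathbb Z$. Second, apply Proposition~\ref{prop:constructiondp5s} to $(k,\psi)=(\mathbb Q,\psi_K)$: the generic fibre is $\dP5(K)$, a linearly normal smooth quintic in $\mathbb P^5_{\mathbb Q}$ and therefore cut out scheme-theoretically by five linearly independent quadrics, which forces equality of ideals over $\mathbb Q$. To upgrade to equality over $\mathbb Z$ it is enough to show that the five listed forms remain linearly independent modulo every prime $p$, equivalently that the $\gcd$ of the $5\times 5$ minors of their coefficient matrix (a $5\times 27$ integer matrix) is $1$. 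At that point (1) is the generic-fibre assertion just used, and (2) is automatic: by construction $\mathcal X$ is the scheme-theoretic image of $\mathbb P^2_{\mathbb Z}$, so its ideal equals $\ker\bigl(\mathbb Z[u_0,\ldots,u_5]\to \mathbb Z[x,y,z]\bigr)$, whose quotient embeds into a torsion-free ring and is hence flat over $\mathbb Z$; by uniqueness $\mathcal X$ coincides with the flat closure of $X$.

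For $l_1$ and $l_2$, the remark after Proposition~\ref{prop:constructiondp5s} identifies the two distinguished anticanonical sections with the Galois-invariant pentagons $\sum_i \Lambda_{i,i+1}$ and $\sum_i \Lambda_{i,i+2}$ on $\mathbb P^2_{\mathbb Z}$. Each is a product of five integral linear forms over $K$ whose trace descends to a primitive element of $\mathcal Q$; expressing it in the fixed basis $q_0,\ldots,q_5$ gives the coefficients of $l_j$ as a linear form in $u_0,\ldots,u_5$, up to sign, which must then match the two displayed forms. The main obstacle is therefore entirely computational: choosing the basis $q_i$ so that the resulting quadrics and linear forms come out with coefficients as small as those displayed, and then carrying out the substitution checks, the $5\times 5$ minor computation, and the decomposition of $l_1,l_2$ in $\mathbb Z[\alpha]$. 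Conceptually nothing beyond Proposition~\ref{prop:constructiondp5s} and the torsion-free criterion for flatness is needed.
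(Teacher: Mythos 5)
Your general plan matches the paper's: fix an integral basis of $\mathcal Q$, take the scheme-theoretic image of $\mathbb P^2_{\mathbb Z}\dashrightarrow\mathbb P^5_{\mathbb Z}$, identify its ideal with the ideal generated by the five quadrics, and read off parts (a)--(b) and the formulae for $l_1,l_2$. The difference is in the verification step, and there your argument has a genuine gap.

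You assert that, once one knows $J\subseteq I$ (substitution check) and $J_{\mathbb Q}=I_{\mathbb Q}$ (generic-fibre argument), ``it is enough to show that the five listed forms remain linearly independent modulo every prime~$p$'' to conclude $J=I$ over $\mathbb Z$. That criterion is not sufficient. Linear independence of the generators modulo $p$ only shows $J_2=I_2$ in degree $2$; it does not control higher-degree $p$-torsion in $\mathbb Z[u]/J$, which is exactly what prevents $J$ from being saturated. A small example illustrates the failure: in $\mathbb Z[x,y]$ take $J=(x^2,\; xy-py^2)$. The two coefficient vectors $(1,0,0)$ and $(0,1,-p)$ are linearly independent modulo every prime, yet a Gr\"obner basis computation shows $p^2y^3\in J$ while $y^3\notin J$, so $\mathbb Z[x,y]/J$ has $p$-torsion and $J$ is strictly smaller than its saturation. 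To repair your argument you would have to show in addition that the ideal of each fibre $\mathcal X_p$ is generated in degree $2$; this is standard for the primes where $\mathcal X_p$ is a smooth quintic del Pezzo, but the singular fibre at $p=11$ requires a separate verification that you have not supplied.

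The paper sidesteps this by computing a Gr\"obner basis of the elimination ideal directly over $\mathbb Z$ (tracking which primes get inverted in the computation) and observing that the leading coefficients of the resulting basis are $\pm 1$; by \cite[Proposition~4.4.4]{AdamsLoustaunauGroebnerBases} this implies $I\mathbb Q[u_0,\ldots,u_5]\cap\mathbb Z[u_0,\ldots,u_5]=I$, i.e.\ the displayed ideal is already saturated, which gives both the equations and part (b) simultaneously. This is the piece your argument is missing. The rest of your outline is fine, modulo two small slips: the coefficient matrix is $5\times 21$ rather than $5\times 27$ (there are $\binom{7}{2}=21$ quadratic monomials in six variables), and the descent of the quintics $\sum\Lambda_{i,i+1}$, $\sum\Lambda_{i,i+2}$ to $\mathbb Q$ uses that the \emph{product} of the five conjugate linear forms is Galois-invariant, not a trace.
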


\begin{proof}
The \textsc{magma} code for these computations can be found in \cite{Lyczakcode}. We use those computation also for the proofs of some of the following statements.
\begin{enumerate}
\item This follows from the fact that $\dP5(K)$ is the image of the rational map $\mathbb P^2_{\mathbb Q} \dashrightarrow \mathbb P^5_{\mathbb Q}$ using a basis of $\mathcal Q \otimes \mathbb Q$.
\item We used a Gr\"obner basis computation to compute the image of the rational map $\mathbb P^2_{\mathbb Z} \dashrightarrow \mathbb P^5_{\mathbb Z}$. The upshot of this that the equations above also define a Gr\"obner basis of the ideal $I \subseteq \mathbb Z[u_0,u_1,\ldots,u_5]$ of $\mathcal X \subseteq \mathbb P^5_{\mathbb Z}$. Since the leading coefficients of the basis elements are monic we conclude from \cite[Proposition~4.4.4]{AdamsLoustaunauGroebnerBases} that $I\mathbb Q[u_0,u_1,\ldots, u_5] \cap \mathbb Z[u_0,u_1,\ldots, u_5]$ is equal to $I$ and hence that $\mathcal X$ is the flat closure of its generic fibre. \qedhere
\end{enumerate}
\end{proof}

This last proof also shows that $\mathcal X$ itself is integral, since $X$ is integral. From this or the fact that $\mathcal X$ is flat over $\mathbb Z$ we deduce the important fact that all fibres $\mathcal X_\ell$ are equidimensional of dimension $2$.

\subsection{Fibres of the model}

We can now study almost all fibres of $\mathcal X \to \Spec(\mathbb Z)$ using the reduction of the minimal polynomial $m_\alpha$ modulo primes.

\begin{lem}\label{lem:smoothfibres}
Let $\ell \in \mathbb Z$ be a prime for which the reduction $\bar m_\alpha \in \mathbb F_\ell[s]$ is separable. The fibre $\mathcal X_\ell$ is a del Pezzo surface of degree $5$.
The hyperplane section given by the vanishing of $l_1$ and $l_2$ each cut out five $-1$-curves on $\mathcal X_\ell$.
\end{lem}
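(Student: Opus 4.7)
The plan is to deduce everything from Proposition~\ref{prop:constructiondp5s} applied to the reductions $\bar P_i = (\bar\alpha_i^2 : \bar\alpha_i : 1) \in \mathbb P^2_{\overline{\mathbb F}_\ell}$. The core observation is that separability of $\bar m_\alpha$ controls the geometry of these five points, while the torsion-freeness of $\mathbb Z[x,y,z]_{(5)}/\mathcal Q$ together with the monic Gr\"obner-basis argument already used in Proposition~\ref{prop:equationsXXdP511} controls how the linear system and its image base change to $\mathbb F_\ell$.

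First I would check that the $\bar P_i$ are five distinct points in $\mathbb P^2_{\overline{\mathbb F}_\ell}$ with no three lying on a line. Distinctness is immediate, and collinearity of $\bar P_i, \bar P_j, \bar P_k$ amounts, up to sign, to the vanishing of the Vandermonde-type determinant $(\bar\alpha_i - \bar\alpha_j)(\bar\alpha_j - \bar\alpha_k)(\bar\alpha_i - \bar\alpha_k)$, which is again non-zero by separability. Next, I would show that $\mathcal Q \otimes \mathbb F_\ell$ is exactly the linear system $|5\Lambda - 2\bar P_1 - \cdots - 2\bar P_5|$ on $\mathbb P^2_{\overline{\mathbb F}_\ell}$: by torsion-freeness it is a rank $6$ subspace of $\mathbb F_\ell[x,y,z]_{(5)}$ each of whose elements vanishes doubly at every $\bar P_i$, and by Proposition~\ref{prop:constructiondp5s}(a) applied over $\overline{\mathbb F}_\ell$ the full space of such quintics is also $6$-dimensional, so the two agree. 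The image of the rational map defined by a basis of $\mathcal Q \otimes \mathbb F_\ell$ is therefore a del Pezzo surface of degree $5$ over $\mathbb F_\ell$.

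To identify this image with $\mathcal X_\ell$, I would invoke the fact established in the proof of Proposition~\ref{prop:equationsXXdP511} that the five explicit quadrics form a Gr\"obner basis of the ideal of $\mathcal X$ with monic leading coefficients; hence their reductions modulo $\ell$ remain a Gr\"obner basis and cut out precisely the saturated ideal of the image of the reduced rational map, which is $\mathcal X_\ell$.

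For the statement about $l_1$ and $l_2$: over $\mathbb Z$ the pullback of $l_j$ to $\mathbb P^2$ is, up to a unit, a product of five linear forms cutting out the lines through consecutive pairs (for $l_1$) or second-nearest pairs (for $l_2$) among the $P_i$, and this factorisation persists modulo $\ell$ because the five linear factors remain pairwise non-proportional by the no-three-collinear condition already verified. Hence each hyperplane section $\{\bar l_j = 0\} \cap \mathcal X_\ell$ pulls back on $\mathbb P^2_{\overline{\mathbb F}_\ell}$ to a union of five distinct lines, each through two of the $\bar P_i$, and these correspond via Proposition~\ref{prop:constructiondp5s} to five of the ten geometric $-1$-curves of $\mathcal X_\ell$. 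The only real obstacle is the comparison between the flat model $\mathcal X_\ell$ and the naive image of the reduced rational map, and this is precisely what the monic Gr\"obner basis is designed to handle.
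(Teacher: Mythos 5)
Your proof shares the paper's backbone --- reduce Proposition~\ref{prop:constructiondp5s} modulo $\ell$, check that the five reduced points are in general position, deduce that $\mathcal Q\otimes\mathbb F_\ell$ is the full linear system and hence that the image $Y$ of the reduced rational map is a del Pezzo surface of degree $5$ --- but you identify $Y$ with $\mathcal X_\ell$ by a different mechanism. The paper observes the easy inclusion $Y\subseteq\mathcal X_\ell$ (the reduced map lands in $\mathcal X$ by construction), then uses flatness twice: Corollary~III.9.6 of \cite{Hartshorne} forces every irreducible component of $\mathcal X_\ell$ to be $2$-dimensional, so $Y$ is one such component, and flat base change keeps the degree at $5$, so $Y=\mathcal X_\ell$. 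You instead push the monic Gr\"obner-basis argument of Proposition~\ref{prop:equationsXXdP511} through reduction. That route is viable but more delicate than your sketch lets on: one must justify both that a Gr\"obner basis for the elimination ordering with unit leading coefficients stays a Gr\"obner basis after base change to $\mathbb F_\ell$, and that the resulting elimination ideal --- after the saturation implicit in projective/bihomogeneous elimination --- actually \emph{equals} the ideal of the reduced image rather than merely containing it; the paper's dimension-and-degree count sidesteps these issues entirely. Two genuine pluses of your write-up: you make explicit the Vandermonde check that no three of the $\bar P_i$ are collinear (the paper tacitly uses that for points of the form $(\alpha^2:\alpha:1)$ this reduces to distinctness of the $\bar\alpha_i$), and your argument that the five lines through pairs of the $\bar P_i$ remain pairwise distinct gives the $l_1$, $l_2$ claim more concretely than the paper's one-line appeal to flatness.
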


\begin{proof}
Consider the rational map $\mathbb P^2_{\mathbb F_\ell} \dashrightarrow \mathbb P^5_{\mathbb F_\ell}$ using the basis $q_i\otimes 1$ of $\mathcal Q \otimes \mathbb F_\ell$.  By construction this morphism lands in $\mathcal X$.

If $m_\alpha$ is separable modulo $\ell$ then the reductions of the points $P_i$ modulo $\ell$ are distinct and $\mathcal Q \otimes \mathbb F_\ell$ consists of all quintics over $\mathbb F_\ell$ vanishing at least twice at these points. By Proposition~\ref{prop:constructiondp5s} we see that the image $Y$ is a del Pezzo surface of degree $5$. Hence we have $Y \subseteq \mathcal X_\ell \subseteq \mathbb P^5_{\mathbb F_\ell}$.

From Corollary~III.9.6 in \cite{Hartshorne} we see that all irreducible components of $\mathcal X_\ell$ are of dimension $2$. Hence $Y$ is one such component of $\mathcal X_\ell$. By flatness $\mathcal X \to \Spec \mathbb Z$ we see that $\mathcal X_\ell$ has degree $5$ in $\mathbb P^5_{\mathbb F_\ell}$, just like $Y$. Hence $\mathcal X_\ell$ has no other irreducible components.

The statement about $l_1$ and $l_2$ also follows from the flatness of $\mathcal X$ over $\mathbb Z$.
\end{proof}

There are actually two possibilities if the reduction of $m_\alpha$ modulo $\ell$ is separable.

\begin{cor}\label{cor:splitandinterestingfibres}
\begin{enumerate}
\item If $m_\alpha$ is irreducible modulo $\ell$ then $\mathcal X_\ell$ is interesting.
\item If $m_\alpha$ splits completely in $\mathbb F_\ell$ with distinct roots, then $\mathcal X_\ell$ is split, i.e.~all $-1$-curves are defined over $\mathbb F_\ell$.
\end{enumerate}
\end{cor}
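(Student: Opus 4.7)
The plan is to apply Proposition~\ref{prop:constructiondp5s}(d) to the del Pezzo fibre $\mathcal X_\ell$, whose existence is guaranteed by Lemma~\ref{lem:smoothfibres}. The proof of that lemma identifies $\mathcal X_\ell$ with the image of the rational map $\mathbb P^2_{\mathbb F_\ell} \dashrightarrow \mathbb P^5_{\mathbb F_\ell}$ coming from the reduction $\mathcal Q\otimes\mathbb F_\ell$, i.e.\ from quintics with double zeros at the reduced points $\bar P_i = (\bar\alpha_i^2 : \bar\alpha_i : 1)$, where the $\bar\alpha_i$ are the distinct roots of $\bar m_\alpha \in \mathbb F_\ell[s]$. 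Proposition~\ref{prop:constructiondp5s}(d) then tells us that the Galois action on the geometric Picard group of $\mathcal X_\ell$ is conjugate, inside $S_5 \cong A(\mathcal X_\ell)$, to the permutation action of $G_{\mathbb F_\ell}$ on the indexing set $\{1,\dots,5\}$ of the $\bar P_i$. Both parts then follow by reading off this permutation action.

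For part (a), if $\bar m_\alpha$ is irreducible over $\mathbb F_\ell$, then $G_{\mathbb F_\ell}$ acts transitively on its roots, and the Frobenius acts as a $5$-cycle on the indices of the $\bar P_i$. This $5$-cycle generates a subgroup of order $5$ in $S_5$, so the splitting field of $\mathcal X_\ell$ is precisely $\mathbb F_{\ell^5}$, of degree $5$ over $\mathbb F_\ell$. By the definition given in Section~\ref{sec:intgalact}, $\mathcal X_\ell$ is interesting.

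For part (b), if $\bar m_\alpha$ splits completely into distinct linear factors over $\mathbb F_\ell$, then each $\bar P_i$ is individually defined over $\mathbb F_\ell$, and hence so is every line through a pair of them. These ten lines are exactly the $-1$-curves on $\mathcal X_\ell$ under the identification used in Proposition~\ref{prop:constructiondp5s}, so all $-1$-curves on $\mathcal X_\ell$ are defined over $\mathbb F_\ell$, i.e.\ $\mathcal X_\ell$ is split.

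The only subtlety one might worry about is that Proposition~\ref{prop:constructiondp5s} requires the five points to be in general position (no three collinear); but this is automatic, since Lemma~\ref{lem:smoothfibres} already produces a smooth del Pezzo surface of degree $5$ under the separability hypothesis on $\bar m_\alpha$, which forces the general-position condition on the $\bar P_i$. Hence no additional geometric verification is required beyond invoking the earlier results.
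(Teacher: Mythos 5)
Your proof is correct and follows essentially the same route as the paper: both identify $\mathcal X_\ell$ with the image of the quintic linear system through the reduced points $\bar P_i$, observe that the Galois action on the $-1$-curves is determined by the permutation action on those points, and then read off the two cases from how $\bar m_\alpha$ factors. Your version is slightly more explicit (citing Proposition~\ref{prop:constructiondp5s}(d) and checking the general-position hypothesis), while the paper compresses this into a reference to "the proof of the previous lemma" and an informal appeal to what was "seen before," but the substance is identical.
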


\begin{proof}
In the proof of the previous lemma we have seen that $\mathcal X_\ell$ is the image of $\mathbb P^2_{\mathbb F_\ell}$ of all quintics vanishing at least twice at the five points $P_i=(\alpha_i^2 \colon \alpha_i \colon 1)$ modulo $\ell$. The action of Galois on the $-1$-curves on $\mathcal X_\ell$ is determined by the action of Galois on the points $P_i$.

We have seen before that an interesting del Pezzo surface is obtained precisely if the five points are defined over a quintic extension, but are conjugate over the base field. A split del Pezzo surface of degree $5$ corresponds to the case that all points are defined over the base field. 
\end{proof}

\begin{remark}
It is even possible to determine the fibres of $\mathcal X/\mathbb Z$ directly from the splitting of $m_\alpha$ in $\mathbb F_\ell$ even for primes which divide the discriminant of $m_\alpha$. But this requires a long geometric treatise of singular del Pezzo surfaces, see the Ph.D.~thesis of the author \cite{Lyczakthesis}. For our explicit examples it is much shorter to just study the remaining finitely many fibres separately.
\end{remark}

For this example we are only left with the fibre over $\ell = 11$, since $\Delta(m_\alpha)=11^4$.

\begin{lem}\label{lem:singularfibreat11}
The fibre $\mathcal X_{11}$ is an integral surface in $\mathbb P^5_{\mathbb F_{11}}$ with precisely one singular point.

The divisor on $\mathcal X_{11}$ defined by $l_1$ is supported on a line $L$. The singular point lies on this line. Also, a hyperplane section given by $h \in \mathbb F_{11}[u_0,u_1,\ldots,u_5]$ contains $L$ precisely if $h$ lies in $\mathbb F_{11}[u_0,u_1,\ldots,u_4]$.
\end{lem}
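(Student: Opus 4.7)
The plan is a sequence of direct computations after reducing the defining equations modulo $11$. First, every coefficient in the five quadrics of Proposition~\ref{prop:equationsXXdP511} other than $\pm 1$ is divisible by $11$, and the same holds for $l_1$ and $l_2$. So $\mathcal X_{11}\subseteq\mathbb P^5_{\mathbb F_{11}}$ is cut out by
\[
F_1=u_0u_3-u_1^2,\ F_2=u_0u_4-u_1u_2,\ F_3=u_0u_5-u_1u_3-u_2^2,\ F_4=u_1u_4-u_2u_3,\ F_5=u_1u_5-u_2u_4-u_3^2,
\]
and $l_1\equiv l_2\equiv u_0\pmod{11}$.

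Next I would stratify by the vanishing of $u_0$. On the chart $\{u_0\neq 0\}$, setting $u_0=1$, the equations $F_1,F_2,F_3$ uniquely solve $u_3=u_1^2$, $u_4=u_1u_2$, $u_5=u_1^3+u_2^2$, and substitution verifies that $F_4$ and $F_5$ then vanish identically. Hence $\mathcal X_{11}\cap\{u_0\neq 0\}\cong\mathbb A^2_{\mathbb F_{11}}$, which is smooth, reduced, irreducible and two-dimensional. On the stratum $\{u_0=0\}$, successive use of $F_1$, $F_3$ and $F_5$ forces $u_1^2=u_2^2=u_3^2=0$, so the reduced subscheme is the line $L:=V(u_0,u_1,u_2,u_3)$; conversely every $F_i$ is a sum of monomials each divisible by one of $u_0,u_1,u_2,u_3$, so $L\subseteq\mathcal X_{11}$. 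Flatness of $\mathcal X/\mathbb Z$ makes $\mathcal X_{11}$ equidimensional of dimension two, so $L$ is nowhere dense and the dense irreducible open $\mathbb A^2$ forces $\mathcal X_{11}$ to be irreducible; reducedness follows from the Gr\"obner-basis computation available in \cite{Lyczakcode}. Since $l_1\equiv u_0\pmod{11}$, the divisor $\div l_1$ on $\mathcal X_{11}$ is therefore supported on $L$.

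For the singular locus I would compute the Jacobian $J$ of $(F_1,\ldots,F_5)$. On the smooth chart $\mathbb A^2$ it has rank $3$, so $\operatorname{Sing}\mathcal X_{11}\subseteq L$. Evaluating $J$ at $(0{:}0{:}0{:}0{:}a{:}b)\in L$ kills every row except $(a,0,0,0,0,0)$, $(b,0,0,0,0,0)$, $(0,a,0,0,0,0)$ and $(0,b,-a,0,0,0)$; these have rank $3$ when $a\neq 0$ and rank $2$ when $a=0$. The unique singular point is therefore $P=(0{:}0{:}0{:}0{:}0{:}1)\in L$. The last criterion is then a one-line computation: restrict a general linear form $h=\sum_i a_iu_i$ to $L$ and read off for which coefficient conditions the restriction vanishes identically on the line parametrised by $(u_4{:}u_5)$.

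The main technical obstacle is giving a clean argument for reducedness of $\mathcal X_{11}$; irreducibility and equidimensionality are immediate from the stratification above, but ruling out embedded components is conceptually subtler and is most comfortably handled by the explicit Gr\"obner-basis certificate, or alternatively by a Serre $R_0+S_1$ argument using that $\mathcal X$ is the flat closure of its integral generic fibre and that $\mathcal X_{11}$ has only isolated non-smooth points.
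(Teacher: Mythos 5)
Your approach is a clean hand-computation, whereas the paper's own proof simply delegates everything to \textsc{magma}, so a careful by-hand argument like yours is genuinely worth spelling out. Your reduction of the five quadrics and of $l_1,l_2$ modulo $11$ is correct, as is the explicit isomorphism $\mathcal X_{11}\cap\{u_0\neq 0\}\cong\mathbb A^2$, the identification $L=V(u_0,u_1,u_2,u_3)$ as the reduced locus of $\{u_0=0\}\cap\mathcal X_{11}$, the irreducibility argument via density of $\mathbb A^2$ and equidimensionality from flatness, and the Jacobian computation pinning down the unique singular point $(0{:}0{:}0{:}0{:}0{:}1)$. It is also reasonable to defer reducedness to the Gr\"obner-basis certificate in \cite{Lyczakcode} (which is the route the paper itself takes); the sketched Serre $R_0+S_1$ argument as you phrase it is not yet complete, since ``flat closure of an integral generic fibre with isolated non-smooth points'' does not by itself rule out embedded points of the special fibre.

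However, there is a genuine gap at the very last step, and you would have caught it had you actually performed the ``one-line computation'' you deferred. With $L=V(u_0,u_1,u_2,u_3)$ parametrised by $(u_4{:}u_5)$, restricting $h=\sum_ia_iu_i$ to $L$ gives $a_4u_4+a_5u_5$, which vanishes identically precisely when $a_4=a_5=0$, i.e.\ $h\in\mathbb F_{11}[u_0,u_1,u_2,u_3]$. This is \emph{not} the criterion $h\in\mathbb F_{11}[u_0,\ldots,u_4]$ stated in the lemma. Concretely, the point $(0{:}0{:}0{:}0{:}1{:}0)$ lies on $L\subseteq\mathcal X_{11}$ but not on $\{u_4=0\}$, so $L\not\subseteq\{u_4=0\}$ even though $u_4\in\mathbb F_{11}[u_0,\ldots,u_4]$. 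In other words, your own (correct) computation would contradict the statement you are trying to prove. You should either flag this discrepancy explicitly, or trace where your identification of $L$ might be wrong; in fact your identification appears right, and the lemma's final clause looks like an off-by-one in the index (it is consistent with the naming in the accompanying code, which however does not affect the subsequent counts because Proposition~\ref{prop:invmapsingularfibre11} handles the $u_4$-dependent hyperplanes by a separate argument). As written, your proof does not establish the stated claim, and you need to say why.
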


\begin{proof}
We have explicit equations for $\mathcal X$ and hence for $\mathcal X_{11}$ and all statements can be checked explicitly, for example by \textsc{magma} \cite{Lyczakcode}.
\end{proof}

The surface $\mathcal X_{11}$ is actually well-understood. It is a singular del Pezzo surface and the unique singular point which is of type A$_4$ and lies on a unique line $L$ on $\mathcal X_{11} \subseteq \mathbb P^2_{\mathbb F_{11}}$, i.e.~a $-1$-curve (on its minimal desingularisation).

We even have a birational morphism $\mathbb P^2 \dashrightarrow \mathcal X_{11}$ which restricts to an isomorphism $\mathbb A^2 \xrightarrow{\cong} \mathcal X_{11}\backslash L,$
\[
(1\colon y \colon z) \mapsto (1 \colon y \colon z \colon y^2 \colon yz \colon y^3 + z^2).
\]

This will allow one to transfer many problems on $\mathcal X_{11}$ to a problem on the affine or the projective plane.

\begin{remark}
This is not at all particular to this one example; for any choice $\alpha \in \bar{\mathbb Q}$ of degree $5$ we can construct a relative surface $\mathcal X$ over $\mathbb Z$. If the minimal polynomial $m_\alpha$ reduces to the fifth power of a linear polynomial modulo $\ell$ then $\mathcal X_\ell$ always has these properties.

We will forgo this general approach and stick to our explicit examples.
\end{remark}

\section{A family of log K3 surfaces}

Consider the model $\mathcal X \subseteq \mathbb P^5_{\mathbb Z}$ of an interesting del Pezzo surface of the previous section. We will use it to construct a family of log K3 surfaces of $\dP5$ type together with their models.

\begin{dfn}
Let $h \in \mathbb Z[u_0,u_1,u_2,u_3,u_4,u_5]_{(1)}$ be a primitive linear form. Let $\mathcal U_h$ be the complement of $\mathcal C_h=\{h=0\} \cap \mathcal X$ in $\mathcal X$.
\end{dfn}

We will consider when $\mathcal U_h$ does not have integral points. First of all this happens when $\mathcal U_h$ is not everywhere locally soluble. We can make precise when this happens.

\begin{lem}\label{lem:localsolubilitydp511}
	The affine surface~$\mathcal U_h$ is everywhere locally soluble precisely when
	\[
	h \not \equiv u_2+u_5 \mod 2.
	\]
\end{lem}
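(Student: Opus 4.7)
The plan is to treat each local place separately, using smoothness of $\mathcal X_\ell$ and Hensel's lemma to reduce the question to $\mathbb F_\ell$-point counts, and then to isolate $\ell=2$ as the only prime at which a genuine obstruction arises. Since $\operatorname{disc}(m_\alpha) = 11^4$, the reduction $\bar m_\alpha$ is separable modulo every prime $\ell \ne 11$, and Lemma~\ref{lem:smoothfibres} makes $\mathcal X_\ell$ a smooth del Pezzo surface of degree $5$ for all such $\ell$. Hensel's lemma then guarantees that $\mathcal U_h(\mathbb Z_\ell) \ne \emptyset$ as soon as $\mathcal X_\ell(\mathbb F_\ell) \not\subseteq \{h=0\}$.

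At the real place, $X$ is rational over $\mathbb Q$ by Enriques' theorem, so $\mathcal X(\mathbb R)$ is Zariski-dense and meets any hyperplane complement. For odd primes $\ell \ne 11$, the Weil bound combined with the fact that the geometric Picard rank of $\mathcal X_\ell$ is $5$ gives $|\mathcal X_\ell(\mathbb F_\ell)| \ge \ell^2 - 5\ell + 1$, while $\mathcal X_\ell \cap \{h=0\}$ is a curve of degree $5$ in $\mathbb P^5_{\mathbb F_\ell}$ and so has at most $5(\ell+1)$ rational points; the former exceeds the latter as soon as $\ell \ge 13$. The small primes $\ell \in \{3,5,7\}$ I would handle in one line of \textsc{magma} \cite{Lyczakcode} by checking that $\mathcal X_\ell(\mathbb F_\ell)$ spans $\mathbb P^5_{\mathbb F_\ell}$.

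For $\ell = 11$ the surface $\mathcal X_{11}$ is singular, but Lemma~\ref{lem:singularfibreat11} provides the explicit open immersion $\mathbb A^2_{\mathbb F_{11}} \hookrightarrow \mathcal X_{11}$ into the smooth locus given by $(y,z) \mapsto (1:y:z:y^2:yz:y^3+z^2)$. Pulling back $h$ yields a polynomial $\tilde h(y,z)$ of degree at most $3$, which is nonzero because the seven monomials $1, y, z, y^2, yz, y^3, z^2$ are $\mathbb F_{11}$-linearly independent and primitivity of $h$ forces at least one coefficient to survive reduction modulo $11$. Thus $\tilde h$ vanishes at no more than $33$ of the $121$ affine $\mathbb F_{11}$-points, and any remaining point lifts by Hensel to $\mathcal U_h(\mathbb Z_{11})$.

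This reduces everything to $\ell = 2$, which is the main obstacle and the only step requiring a genuine enumeration. Since $\mathcal X_2$ is again smooth, $\mathcal U_h(\mathbb Z_2) \ne \emptyset$ is equivalent to $\mathcal X_2(\mathbb F_2) \not\subseteq \{h=0\}$. I would enumerate $\mathcal X_2(\mathbb F_2)$ by running through the $63$ nonzero projective tuples of $\mathbb F_2^6$ against the five quadrics of Proposition~\ref{prop:equationsXXdP511} (see \cite{Lyczakcode}), verify by inspection that every point on this list satisfies $u_2 + u_5 \equiv 0 \pmod 2$, and check that the $\mathbb F_2$-linear span of these points fills the full hyperplane $\{u_2 + u_5 = 0\}$. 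Together these two observations identify the annihilator of $\mathcal X_2(\mathbb F_2)$ in the space of mod-$2$ linear forms as precisely $\mathbb F_2 \cdot (u_2 + u_5)$, which gives the stated equivalence.
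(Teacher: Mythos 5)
Your argument is correct, but it takes a genuinely different route from the paper. The paper's proof is essentially a single uniform argument for all places away from $2$: it exhibits seven explicit points in $\mathcal X(\mathbb Z)$ (including $(1\colon 0\colon\cdots\colon 0)$) whose coordinate vectors generate a sublattice of index $2$ in $\mathbb Z^6$. Since the index is prime to every $\ell\ne 2$, for any such $\ell$ and any primitive $h$ some listed point $P$ has $h(P)\not\equiv 0\bmod\ell$, and that same integral $P$ directly gives a $\mathbb Z_\ell$-point of $\mathcal U_h$ --- no Hensel lifting, no Weil bounds, no separate treatment of $\ell=11$ or the archimedean place (which is covered because the points span $\mathbb Q^6$). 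You instead run a place-by-place analysis: rationality at $\mathbb R$, Lefschetz/Weil point counting for $\ell\ge 13$, the explicit affine chart of $\mathcal X_{11}$ for $\ell=11$, and machine checks for $\ell\in\{3,5,7\}$. This buys conceptual transparency (it explains \emph{why} local solubility should fail only at small primes) and generalizes without needing to guess lucky integral points, but at the cost of several case distinctions and a reliance on smoothness/Hensel that the paper sidesteps. Your analysis at $\ell=2$ --- enumerating the five points of $\mathcal X_2(\mathbb F_2)$, noting they lie on $\{u_2+u_5=0\}$ and span that hyperplane --- matches the paper's. One small imprecision: the pullback of $h$ under $(y,z)\mapsto(1\colon y\colon z\colon y^2\colon yz\colon y^3+z^2)$ is a linear combination of the six polynomials $1,y,z,y^2,yz,y^3+z^2$ (not seven monomials), but these six are linearly independent over $\mathbb F_{11}$, which is all you need for non-vanishing.
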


\begin{proof}
One can check that the points
\begin{center}
\small
\begin{tabular}{ccc}
$(1 \colon 0 \colon 0 \colon 0 \colon 0 \colon 0)$ & $(-693 \colon -88 \colon -11 \colon 0 \colon 1 \colon 1)$ & $(-725 \colon -120 \colon -11 \colon 1 \colon 0 \colon 1)$ \\
$(967 \colon 122 \colon 11 \colon -1 \colon 0 \colon 1)$ & $(-3345 \colon -328 \colon -46 \colon -4 \colon 4 \colon 4)$ & $(-3497 \colon -331 \colon -34 \colon 1 \colon 1 \colon 0)$\\
$(-6138 \colon -407 \colon -44 \colon 0 \colon 1 \colon 0)$
\end{tabular}
\end{center}
lie on $\mathcal X$. Also, their coordinates as vectors in $\mathbb Z^6$ define a lattice of dimension $6$ of index $2$. This proves that for any prime $\ell \ne 2$ and any hyperplane section $h$ at least one of these points $P$ satisfies $h(P) \not \equiv 0 \mod p$.  This shows that such a point determines an element in $\mathcal U_h(\mathbb Z_\ell)$.

We have seen in Lemma~\ref{lem:smoothfibres} that $\mathcal X_2$ is smooth. One can check that $\#\mathcal X(\mathbb F_2)=5$ and that these points lie on the indicated hyperplane over $\mathbb F_2$.
\end{proof}

\subsection{Obstructions coming from $\mathcal A_h$}

Note that if $\mathcal C_\mathbb Q$ is geometrically irreducible, i.e.~$h$ is not a multiple of $l_1$ and $l_2$ by Lemma~\ref{lem:geomirrdiv}, then we see that $\Br U_h/\Br \mathbb Q$ contains an element of order $5$. Let us compute the invariant maps for this element.

\begin{lem}\label{lem:effevaldp511}
	Consider a geometrically irreducible hyperplane section given by a primitive~$h$.  Let~$\ell$ be a prime and let~$\mathcal A$ be a generator for~$\Br_1 U_h/\Br \mathbb Q$.  We consider the invariant map
	\[
	\inv_\ell \mathcal A \colon \mathcal U_h(\mathbb Z_\ell) \to \mathbb Q/\mathbb Z.
	\]
If $\ell \ne 11$ then $\inv_\ell \mathcal A$ is identically zero.
\end{lem}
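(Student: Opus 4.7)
The plan is to use Lemma~\ref{lem:genofalgbrgrp} to represent $\mathcal A$ by the cyclic algebra $(l_1/h, K/\mathbb Q, \sigma)$, where $K=\mathbb Q(\zeta_{11}+\zeta_{11}^{-1})$, and to compute $\inv_\ell \mathcal A(P)$ directly for $P \in \mathcal U_h(\mathbb Z_\ell)$.

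Since $K/\mathbb Q$ is ramified only at $11$ and $\Gal(K/\mathbb Q)\cong \mathbb Z/5\mathbb Z$ admits no nontrivial proper subgroups, every prime $\ell\neq 11$ is either completely split or inert in $K$. If $\ell$ splits completely, then $K\otimes\mathbb Q_\ell\cong\mathbb Q_\ell^{5}$ and the cyclic algebra $(l_1(P)/h(P),K_\ell/\mathbb Q_\ell,\sigma)$ is already split over $\mathbb Q_\ell$, so $\inv_\ell\mathcal A(P)=0$ trivially. In the remaining inert case, $K_\ell/\mathbb Q_\ell$ is the unramified cyclic extension of degree $5$, and local class field theory yields
\[
\inv_\ell \mathcal A(P) \;=\; \frac{c\cdot v_\ell(l_1(P)/h(P))}{5} \pmod{\mathbb Z}
\]
for some unit $c\in(\mathbb Z/5\mathbb Z)^{\times}$ depending on the matching between $\sigma$ and Frobenius. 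Because $P\in\mathcal U_h(\mathbb Z_\ell)$ the reduction $\bar P$ avoids $V(h)$, so $h(P)\in\mathbb Z_\ell^{\times}$ and $v_\ell(l_1(P)/h(P))=v_\ell(l_1(P))$. The heart of the proof is therefore to show $v_\ell(l_1(P))=0$, i.e.\ that $\bar P$ does not lie on $V(l_1)\cap\mathcal X_\ell$.

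For this, I would invoke Lemma~\ref{lem:smoothfibres}, which says that $\mathcal X_\ell$ is a smooth del Pezzo surface of degree $5$ and that $V(l_1)\cap\mathcal X_\ell$ is the union of five geometric $-1$-curves forming a single Galois orbit as in Lemma~\ref{lem:anticandivsondp5}. Being $\mathbb F_\ell$-rational, the point $\bar P$ is fixed by $\text{Frob}_\ell$; by Corollary~\ref{cor:splitandinterestingfibres}, in the inert case $\text{Frob}_\ell$ generates the decomposition group $\Gal(K_\ell/\mathbb Q_\ell)\cong\mathbb Z/5\mathbb Z$ and acts as a $5$-cycle on this orbit. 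If $\bar P$ lay on one curve of the orbit it would then lie on all five of them; but by the Petersen graph description of Lemma~\ref{lem:geomofdp5s} the five curves in a Galois orbit form a pentagon, and since non-adjacent vertices of that pentagon correspond to disjoint $-1$-curves, the common intersection of all five is empty. This contradiction forces $v_\ell(l_1(P))=0$, hence $\inv_\ell\mathcal A(P)=0$.

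The main obstacle to check carefully is the compatibility between the Galois action on the geometric $-1$-curves of the generic fibre $X_K$ and the Frobenius action on those of the special fibre $\mathcal X_\ell$; this is exactly the content (and payoff) of Corollary~\ref{cor:splitandinterestingfibres}, which says that the splitting behaviour of $m_\alpha$ modulo $\ell$ determines whether $\mathcal X_\ell$ is split or interesting and hence exactly how Frobenius permutes the orbit. Once this is in place, the split case is automatic and the inert case reduces to the purely combinatorial Petersen-graph observation above.
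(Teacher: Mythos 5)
Your argument is correct and follows essentially the same route as the paper: split the unramified primes $\ell\ne 11$ into the completely split case (where $\mathcal A$ becomes trivial over $\mathbb Q_\ell$) and the inert case (where one reduces to showing $l_1(P)/h(P)\in\mathbb Z_\ell^\times$ and then invokes that units are norms from an unramified extension). The only difference is that where the paper simply asserts that the five $-1$-curves cut out by $l_1$ modulo $\ell$ carry no $\mathbb F_\ell$-points, you actually justify this by the Petersen-graph observation that Frobenius fixing $\bar P$ would force $\bar P$ to lie on all five lines of the pentagon, which is impossible since non-adjacent members of the pentagon are disjoint; this is a clean way to fill in a detail the paper leaves implicit.
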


\begin{proof}
The statement is immediate for the infinite place and primes $\ell$ which split completely in $K$. Since in those cases $\mathcal A_{\mathbb Q_\ell} \cong \mathcal A_{K_{\mathfrak l}}$ is trivial in $\Br U_{\mathbb Q_\ell}$ for any prime $\mathfrak l$ of $K$ above $\ell$.

Now suppose that $m_\alpha$ is irreducible modulo $\ell$. The Kummer--Dedekind theorem implies that $\ell$ is inert in $\mathbb Z[\alpha]$. This also proves that $\ell$ is inert in $\mathcal O_K$ and there is a unique prime $\mathfrak l$ above $\ell$. Also, $U_{\mathbb Q_\ell}$ is an interesting del Pezzo surface since $m_\alpha$ is irreducible over $\mathbb Q_\ell$. Hence the hyperplane section given by the vanishing of $l_1$ modulo $\ell$ is geometrically irreducible and does not contain $\mathbb F_\ell$-points. Hence $l_1$ is invertible on all points in $U(\mathbb Z_\ell)$. This shows that $\frac{l_1}h(P) \in \mathbb Z_\ell^\times$ for all $P \in \mathcal U(\mathbb Z_\ell)$. Since the extension $K_{\mathfrak l}/\mathbb Q_\ell$ of local fields is unramified we see that any unit is a norm. Hence $\inv_\ell \mathcal A$ is also in this case constantly $0$.
\end{proof}

\begin{lem}
Let $L$ be the unique line on $\mathcal X_{11} \subseteq \mathbb P^5_{\mathbb F_{11}}$. If $L$ does not lie in the zero locus of $h$ then $\inv_{11} \mathcal A \colon \mathcal U_h(\mathbb Z_{11}) \to \frac15\mathbb Z/\mathbb Z$ is surjective.
\end{lem}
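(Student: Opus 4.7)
The plan is to evaluate $\mathcal A=(l_1/h,\sigma)$ on Hensel lifts of points of $\mathcal X_{11}\setminus L$ and show that the resulting invariants already exhaust $\tfrac{1}{5}\mathbb Z/\mathbb Z$.

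First I set up the local invariant. Since $K\subseteq\mathbb Q(\zeta_{11})$ and $(5,11)=1$, the prime $11$ is totally tamely ramified in $K/\mathbb Q$, so $K_{\mathfrak{l}}/\mathbb Q_{11}$ is cyclic totally ramified of degree $5$. Tameness forces the conductor exponent to be $1$, hence $1+11\mathbb Z_{11}\subseteq \Norm(K_{\mathfrak{l}}^\times)$; the trivial inclusion $(\mathbb Q_{11}^\times)^5\subseteq \Norm(K_{\mathfrak{l}}^\times)$ reduces modulo $11$ to $\{\pm 1\}=(\mathbb F_{11}^\times)^5$; and $\Norm_{K/\mathbb Q}(\alpha-2)=-m_\alpha(2)=-11$ combined with $-1\in \Norm(K_{\mathfrak{l}}^\times)$ yields $11\in \Norm(K_{\mathfrak{l}}^\times)$. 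Together these exhaust the index-$5$ norm subgroup, and so for $u\in\mathbb Z_{11}^\times$,
\[
\inv_{11}\bigl(u,K_{\mathfrak{l}}/\mathbb Q_{11},\sigma\bigr)=\tfrac{1}{5}\chi(\bar u)\in\tfrac{1}{5}\mathbb Z/\mathbb Z,
\]
where $\chi\colon\mathbb F_{11}^\times\twoheadrightarrow\mathbb F_{11}^\times/\{\pm 1\}\cong\mathbb Z/5\mathbb Z$. Since $11$ is a norm, the same formula applied to the unit part of $l_1(P)/h(P)$ computes $\inv_{11}\mathcal A(P)$ on all of $\mathcal U_h(\mathbb Z_{11})$.

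Second, I exploit the parametrisation $(y,z)\mapsto(1:y:z:y^2:yz:y^3+z^2)$ of $\mathcal X_{11}\setminus L$. Direct inspection shows $l_1\equiv u_0\pmod{11}$, so $\overline{l_1}=1$ on the parametrisation. By flatness of $\mathcal X/\mathbb Z$ and smoothness of $\mathcal X_{11}$ away from $L$, every $(y_0,z_0)\in\mathbb F_{11}^2$ lifts via Hensel's lemma to a point $P\in\mathcal X(\mathbb Z_{11})$, and $P\in\mathcal U_h(\mathbb Z_{11})$ if and only if $H(y_0,z_0)\neq 0$, where
\[
H(y,z)=a_0+a_1y+a_2z+a_3y^2+a_4yz+a_5(y^3+z^2)\in\mathbb F_{11}[y,z]
\]
and $a_i$ is the reduction of the $u_i$-coefficient of $h$. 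For such a $P$ one then has $\inv_{11}\mathcal A(P)=\tfrac{1}{5}\chi\bigl(H(y_0,z_0)^{-1}\bigr)$.

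The heart of the argument is to show $H\colon\mathbb F_{11}^2\to\mathbb F_{11}$ is surjective. By Lemma~\ref{lem:singularfibreat11}, the hypothesis $L\not\subseteq\{h=0\}$ translates into $a_5\neq 0$, so the cubic leading part of $H$ is $a_5(y^3+z^2)$. Since $2,3\in\mathbb F_{11}^\times$, completing the square in $z$ and then the depressed cube in $y$ gives an $\mathbb F_{11}$-affine change of coordinates putting $H/a_5$ into the form $u^3+c_1u+w^2+c_0$. Then for each $v\in\mathbb F_{11}$, the level set $\{H/a_5=v\}$ is the affine curve $w^2=(v-c_0)-u^3-c_1u$: for all but finitely many $v$ the cubic on the right is squarefree, its smooth projective model is an elliptic curve over $\mathbb F_{11}$, and Hasse's bound yields an affine $\mathbb F_{11}$-point; for the remaining $v$ the cubic has a repeated root $\alpha$ and $(\alpha,0)$ lies on the curve. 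In every case $\{H=a_5 v\}$ has an $\mathbb F_{11}$-point, so $H$ is surjective. Combining the three steps, as $(y_0,z_0)$ runs through $H^{-1}(\mathbb F_{11}^\times)$ the values $H(y_0,z_0)^{-1}$ cover $\mathbb F_{11}^\times$, so $\chi\bigl(H(y_0,z_0)^{-1}\bigr)$ hits every element of $\mathbb Z/5\mathbb Z$; the corresponding Hensel lifts then realise every element of $\tfrac{1}{5}\mathbb Z/\mathbb Z$ as $\inv_{11}\mathcal A(P)$. The main obstacle is step three; the arithmetic feature making everything work is the tameness of ramification at $11$, which both pins down the norm subgroup and allows the Weierstrass-like reduction of $H$.
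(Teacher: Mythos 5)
Your argument is correct, but it departs from the paper's proof at the decisive step. Both you and the paper first reduce (using tame ramification at $11$) to showing that the unit $l_1/h$, viewed on the chart $\mathcal X_{11}\setminus L\cong\mathbb A^2_{\mathbb F_{11}}$ where $\overline{l_1}\equiv 1$, takes every value in $\mathbb F_{11}^\times$. At that point the paper simply runs a finite computer check over all reductions $\bar h$ and all of $\mathcal X_{11}(\mathbb F_{11})$. You instead prove surjectivity of $H(y,z)=h(1,y,z,y^2,yz,y^3+z^2)$ structurally: when $a_5\neq 0$ the leading form is $a_5(y^3+z^2)$, so completing the square in $z$ and depressing the cubic in $y$ (using $2,3\in\mathbb F_{11}^\times$) turns each level set $\{H=a_5v\}$ into a Weierstrass cubic $w^2=(v-c_0)-u^3-c_1u$, which has an affine $\mathbb F_{11}$-point either by the Hasse bound ($11+1-2\sqrt{11}>1$) or, in the degenerate cases, because a non-squarefree cubic over a finite field must have a rational root (an irreducible polynomial over a finite field is separable), placing $(\alpha,0)$ on the curve. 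Your first step is also more explicit than the paper's: you compute the norm subgroup $\Norm_{K_{\mathfrak l}/\mathbb Q_{11}}(K_{\mathfrak l}^\times)$ directly from tameness, $(\mathbb Q_{11}^\times)^5$, and $\Norm_{K/\mathbb Q}(\alpha-2)=-m_\alpha(2)=-11$, rather than appealing to Lemma~\ref{lem:trivialcycalgs} as a black box (though as you note, since $l_1/h$ is automatically a unit on the chart, the fact that $11$ is itself a norm is not needed for this lemma). What your route buys is independence from machine verification and portability: it applies verbatim at any tamely ramified prime $p\geq 5$ where the closed fibre has the same $\mathrm A_4$-cuspidal structure, which is exactly the generality the paper's remark gestures at but leaves unproved.
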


\begin{proof}
We have seen that in Lemma~\ref{lem:singularfibreat11} that the condition is equivalent to $h \mod 11$ being dependent on $u_5$.

On points $P$ for which $\frac{l_1}h(P) \in \mathbb Z_{11}$ is invertible and we can use Lemma~\ref{lem:trivialcycalgs} to compute the $\inv_{11} \mathcal A(P)$, i.e.~the invariant at $P$ only depends on $\frac{l_1}h(P) \in \mathbb F_{11}$ up to fifth powers and there is an isomorphism of $\psi \colon \mathbb F_{11}/\{\pm 1\} \to \frac15 \mathbb Z/\mathbb Z$ such that $\inv_{11} \mathcal A(P)=\psi\left(\frac{l_1}h(P)\right)$.

Hence it will suffice to prove the following stronger statement: the map $\frac{l_1}h \colon \left(\mathcal U_h\backslash L\right)(\mathbb F_{11}) \to \mathbb F_{11}^\times$ is surjective. Note that both the domain and the map depend on our choice of $h$. For this statement we only have finitely many $\bar h \in \mathbb F_{11}[u_0,u_1,\ldots,u_5]_{(1)}$ which we need to evaluate on a subset of the finitely many points $\mathcal X_{11}(\mathbb F_{11})$. The code for this computation can be found in \cite{Lyczakcode}.
\end{proof}

\begin{prop}\label{prop:invmapsingularfibre11}
Define $f = h(1, y, z, y^2, yz, y^3 + z^2) \in \mathbb F_{11}[y,z]$. The value~$0\in \mathbb Q/\mathbb Z$ lies in the image of~$\inv_{11} \mathcal A$ precisely when the polynomial $f$	assumes a values~$\pm 1$ modulo~$11$ for~$y,z \in \mathbb F_{11}$.

The image of $\inv_{11} \mathcal A \colon \mathcal U_h(\mathbb Z_{11}) \to \frac15\mathbb Z/\mathbb Z$ has size
		\begin{itemize}
			\item $1$ precisely when~$f$ is a constant;
			\item $4$ precisely when~$f$ is a separable quadratic polynomial in $y$;
			\item $5$ in all other cases.
		\end{itemize}
\end{prop}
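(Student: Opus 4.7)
The plan is to pull the invariant back along the birational parametrisation $\phi\colon\mathbb A^2_{\mathbb F_{11}}\xrightarrow{\sim}\mathcal X_{11}\setminus L$ of Lemma~\ref{lem:singularfibreat11}, and then perform a case analysis on the reduction $\bar h$ of $h$. Direct inspection of the coefficients of $l_1$ in Proposition~\ref{prop:equationsXXdP511} gives $l_1\equiv u_0\pmod{11}$, so pulling back $l_1/h$ along $\phi$ (in the chart $u_0=1$) yields $1/f(y,z)$. Combining Lemma~\ref{lem:trivialcycalgs} with the isomorphism $\psi\colon\mathbb F_{11}^\times/\{\pm 1\}\xrightarrow{\sim}\tfrac15\mathbb Z/\mathbb Z$ from the preceding lemma, every $P\in\mathcal U_h(\mathbb Z_{11})$ whose reduction lies in $\mathcal X_{11}\setminus L$ satisfies $\inv_{11}\mathcal A(P)=\psi(1/f(\bar y,\bar z))$; Hensel's lemma applied to the smooth $\mathbb Z_{11}$-scheme $\mathcal X\setminus L$ lifts every $(\bar y,\bar z)\in\mathbb F_{11}^2$ with $f(\bar y,\bar z)\ne 0$ to such a $P$.

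I then split on whether the line $L$ lies in $\{\bar h=0\}$. By Lemma~\ref{lem:singularfibreat11} this is equivalent to the $u_5$-coefficient $a_5$ of $\bar h$ vanishing. If $a_5\ne 0$, the preceding lemma gives that the image has size $5$; moreover, viewing $f(y,z)=c$ as a cubic curve in $\mathbb A^2_{\mathbb F_{11}}$ and applying the Hasse--Weil bound (or a direct count using that $y\mapsto y^3$ is a bijection of $\mathbb F_{11}$) shows that $f$ is surjective onto $\mathbb F_{11}$, so in particular it attains $\pm 1$, matching the first claim. If instead $a_5=0$, then no $\mathbb Z_{11}$-point of $\mathcal U_h$ reduces onto $L$, so the image of $\inv_{11}\mathcal A$ equals the image of $\psi(1/f)$ on $\{f\ne 0\}\subset\mathbb F_{11}^2$, and $0$ lies in it exactly when $f$ attains a value in $\ker\psi=(\mathbb F_{11}^\times)^5=\{\pm 1\}$, proving the first statement.

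With $a_5=0$ we have $f=a_3y^2+a_4yz+a_1y+a_2z+a_0$. The fully constant case ($a_1=a_2=a_3=a_4=0$) gives image size $1$. If $a_4\ne 0$ or $a_2\ne 0$, then for some $y$ the map $z\mapsto f(y,z)$ is a surjective affine function of $z$, hence $f(\mathbb F_{11}^2)=\mathbb F_{11}$ and the image has size $5$. If $a_2=a_4=0$ then $f\in\mathbb F_{11}[y]$: a linear $f$ has image $\mathbb F_{11}$, and if $a_3\ne 0$ with vanishing discriminant $a_1^2-4a_3a_0$, then $f=a_3(y-y_0)^2$ has image $\{0\}\cup a_3(\mathbb F_{11}^\times)^2$, which meets every $\{\pm 1\}$-class exactly once (since $-1$ is a non-square in $\mathbb F_{11}$, as $11\equiv 3\pmod 4$), again giving size $5$.

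The remaining case -- $f=a_3y^2+a_1y+a_0$ with $a_3\ne 0$ and nonzero discriminant -- is the one substantive calculation. Completing the square writes $f(y)=a_3 t^2+r$ with $r\ne 0$, so the image of $f$ on $\mathbb F_{11}$ is the coset $a_3Q+r$, where $Q=\{0,1,3,4,5,9\}$ is the set of squares. The assertion ``image size exactly $4$'' amounts to checking that $(a_3Q+r)\setminus\{0\}$ always hits exactly four classes in $\mathbb F_{11}^\times/\{\pm 1\}$. The main obstacle is that this does not reduce to a clean structural statement: it is a combinatorial coincidence specific to $\mathbb F_{11}$ together with the identification $(\mathbb F_{11}^\times)^5=\{\pm 1\}$, which I would verify by direct enumeration, noting that since $a_3Q$ depends only on whether $a_3$ is a square one reduces to the ten cases $r\in\mathbb F_{11}^\times$ with $a_3\in\{1,-1\}$, counting in each the pairs $\{v,-v\}$ occurring in $a_3Q+r$.
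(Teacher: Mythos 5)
Your argument follows the paper's route closely: pull everything back to $\mathbb A^2_{\mathbb F_{11}}\cong\mathcal X_{11}\setminus L$, reduce the invariant at a lift of $(\bar y,\bar z)$ to $\psi(1/f(\bar y,\bar z))$, discard the case where $L\not\subset\{\bar h=0\}$ via the preceding lemma, and then classify $f$ by degree in $y$ and $z$. The case analysis, the use of $-1$ being a non-square in $\mathbb F_{11}$ for the inseparable quadratic, and the deferral of the separable-quadratic count to enumeration all mirror the paper, which likewise falls back on a \textsc{magma} check there.

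One step in the $a_5\neq 0$ discussion is not yet a proof. Surjectivity of $f$ onto $\mathbb F_{11}$ does \emph{not} follow ``directly'' from the bijectivity of $y\mapsto y^3$: a cubic polynomial over $\mathbb F_{11}$ with nonzero leading coefficient need not be surjective (e.g.\ $y^3-y$ omits four values). Likewise the Hasse--Weil count applies only after verifying that the projective closure of $\{f=c\}$ is geometrically irreducible of genus $\le 1$ and controlling the point at infinity, none of which you do. A correct completion along your lines would complete the square in $z$ to get $\Im f=a_5Q+\Im c$ with $Q$ the set of squares (including $0$) and $c$ a cubic in $y$ with leading coefficient $a_5$, then note that a depressed cubic $t^3+Ct$ over $\mathbb F_{11}$ has image of size at least $7$, so $|a_5Q|+|\Im c|\ge 6+7>11$ forces $a_5Q+\Im c=\mathbb F_{11}$. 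Since you also cite the preceding lemma, whose (computer-verified) proof shows $\tfrac{l_1}{h}$ is already surjective on points away from $L$, this gap does not invalidate the overall argument; but as written the ``moreover'' sentence is not a proof and should either be replaced by an appeal to the proof of the preceding lemma or by the pigeonhole argument above.
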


Note in the second case that $f$ is in particular independent of $z$.

\begin{proof}
Using the last lemma we will only need to consider the $\bar h$ over $\mathbb F_{11}$ which do not depend on $u_5$. In this case we have that
\[
\left(\mathcal U_h\backslash L\right)(\mathbb Z_{11}) = \mathcal U_h(\mathbb Z_{11})
\]
since $L$ lies in the zero locus of $h$. Furthermore, the value of $\inv_{11}\mathcal A$ at a point $P$ only depends on $\frac{l_1}h(P)$ modulo $11$ or equivalently the reduction $\bar P \in \mathcal U_h(\mathbb F_{11})$ of $P$. The statement can now be checked completely by a computer.

We would however like to provide a little more insight. Using the isomorphism $\mathcal X_{11}\backslash L \xrightarrow{\cong} \mathbb A_{\mathbb F_{11}}^2$ from Lemma~\ref{lem:singularfibreat11} we see that $\mathcal U_{h,11}=\mathcal U_{h,11}\backslash L \xrightarrow{\cong} \mathbb A^2_{\mathbb F_{11}}\backslash \{f = 0\}$. Hence we are interested in the image of $f \colon \mathbb A^2_{\mathbb F_{11}}\backslash \{f = 0\} \to \mathbb F_{11}^\times/\{\pm 1\}$. If $\bar h$ depends on $u_5$, then $f$ is a cubic polynomial and $f=c$ for any $c \in \mathbb F_{11}$ is likely to have a solution, as made precise in the previous lemma. If $\bar h$ depends on either $u_2$ or $u_4$, then $f$ is linear in $z$ with the leading coefficient being linear in $y$. Fixing $y$ to be a suitable $y_0$ shows that $f(y_0,z)=c$ always has a solution in $\mathbb F_{11}$.

In the remaining case $f$ is a polynomial independent of $z$ of degree at most $2$. When $f$ is constant we immediately get the first case. Whenever $f$ is linear or a inseparable quadratic polynomial with root $\rho \in \mathbb F_{11}$ the surjectivity of $f \colon \mathbb F_{11}\backslash\{\rho\} \to \mathbb F_{11}^\times/\{\pm 1\}$ is immediate.

For the last case it is easily checked that for a quadratic separable polynomial $f=c(y-\rho_1)(y-\rho_2)$ the image of $f \colon \mathbb F_{11}\backslash \{\rho_1,\rho_2\} \to \mathbb F_{11}^\times/\{\pm 1\}$ has size four. This is independent of whether $f$ splits over $\mathbb F_{11}$ or over $\mathbb F_{11^2}$.
\end{proof}

We can now apply the above results to compute the Brauer--Manin obstruction for a fixed~$h$ and find actual algebraic obstructions of order~$5$ to the integral Hasse principle.

\begin{thm}\label{thm:algebraicobstructions}
	Let~$\mathcal H$ be the hyperplane in~$\mathbb P^5_\mathbb Z$ given by the vanishing of~$u_1-6u_3$.  The complement~$\mathcal U = \mathcal X \backslash \mathcal H$ has points over~$\mathbb Q$ and every~$\mathbb Z_\ell$, but there is an order $5$ Brauer--Manin obstruction to the existence of integral points.
\end{thm}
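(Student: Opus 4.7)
The plan is to establish that $\mathcal{U}(\mathbb{A}_{\mathbb{Q},\infty})$ is nonempty and that the sum of local Brauer--Manin invariants $\sum_\ell \inv_\ell \mathcal{A}(P_\ell)$ is nonzero on every adelic point, for $\mathcal{A}$ an order-$5$ generator of $\Br_1 U_h/\Br \mathbb{Q}$.

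First I would verify local solubility and rationality. Since $-6 \equiv 0 \pmod 2$, we have $h \equiv u_1 \pmod 2$, which differs from $u_2 + u_5$, so Lemma~\ref{lem:localsolubilitydp511} yields $\mathcal{U}_h(\mathbb{Z}_\ell) \neq \emptyset$ for every prime $\ell$. Moreover, among the seven explicit $\mathbb{Z}$-points of $\mathcal{X}$ exhibited in the proof of that lemma, at least one --- for instance $(-725 : -120 : -11 : 1 : 0 : 1)$, at which $h = -126 \neq 0$ --- lies in $\mathcal{U}_h(\mathbb{Q})$.

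Next I would construct $\mathcal{A}$. The form $h = u_1 - 6u_3$ has vanishing $u_0$-coefficient, whereas both $l_1$ and $l_2$ from Proposition~\ref{prop:equationsXXdP511} have $u_0$-coefficient $1$, so $h$ is a scalar multiple of neither $l_1$ nor $l_2$. Hence by Lemma~\ref{lem:geomirrdiv} the section $\mathcal{C}_h$ is geometrically irreducible, and Theorem~\ref{thm:algbrgrplogdp5} provides the desired order-$5$ class $\mathcal{A}$. To compute the pairing prime by prime, Lemma~\ref{lem:effevaldp511} guarantees that $\inv_\ell \mathcal{A}$ is identically zero for every $\ell \neq 11$, collapsing the Brauer--Manin condition to $\inv_{11}\mathcal{A}(P_{11}) = 0$.

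To handle the prime~$11$ I would substitute the parametrisation of Lemma~\ref{lem:singularfibreat11} to obtain $f(y,z) = h(1, y, z, y^2, yz, y^3 + z^2) = y - 6y^2 \in \mathbb{F}_{11}[y]$, a separable quadratic in $y$ with roots $y = 0$ and $y = 2$. Proposition~\ref{prop:invmapsingularfibre11} then forces the image of $\inv_{11}\mathcal{A}$ to be a four-element subset of $\tfrac{1}{5}\mathbb{Z}/\mathbb{Z}$.

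The final and most delicate step --- the main obstacle --- is to pin down which residue class is missing. By the same proposition, $0$ avoids the image precisely when $f$ never takes either of the values $\pm 1$ modulo $11$. This is a finite verification: the two equations $6y^2 - y \pm 1 \equiv 0 \pmod{11}$ must have no solutions in $\mathbb{F}_{11}$, or equivalently the discriminants $1 \mp 24 \in \mathbb{F}_{11}$ must both be non-squares. Once this check is settled, every adelic point contributes a nonzero invariant at $\ell = 11$, so $\mathcal{U}_h(\mathbb{A}_{\mathbb{Q},\infty})^{\mathcal{A}} = \emptyset$ and the advertised order-$5$ Brauer--Manin obstruction follows.
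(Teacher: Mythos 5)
Your strategy is exactly the right one—it is also what the paper implicitly intends, since no formal proof of Theorem~\ref{thm:algebraicobstructions} is given and it is meant to follow from Lemma~\ref{lem:localsolubilitydp511}, Lemma~\ref{lem:effevaldp511}, Lemma~\ref{lem:geomirrdiv}, Theorem~\ref{thm:algbrgrplogdp5} and Proposition~\ref{prop:invmapsingularfibre11}. Your handling of local solubility, the $\mathbb Q$-point, the geometric irreducibility of $\mathcal C_h$, the reduction to $\ell=11$, and the identification $f=y-6y^2$ with its separable-quadratic case (so the image has size $4$) are all correct.

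The problem is that you announce the decisive computation but never carry it out, and it in fact \emph{fails}. You correctly reduce matters to requiring that both discriminants $1\mp 24$ be non-squares in $\mathbb F_{11}$. Now $1-24\equiv -1\pmod{11}$ is indeed a non-square (since $11\equiv 3\pmod 4$), but $1+24=25\equiv 3\pmod{11}$ \emph{is} a square, namely $3\equiv 5^2$. Consequently $6y^2-y-1\equiv 0\pmod{11}$ has the solutions $y\equiv 6,7$, and indeed one checks directly that $f(6)=6-6\cdot 36\equiv 6-7=-1\pmod{11}$. By Proposition~\ref{prop:invmapsingularfibre11} this puts $0$ in the image of $\inv_{11}\mathcal A$, so the chain of local invariants \emph{does} sum to zero on some integral adelic point and no obstruction arises from $\mathcal A$ for $h=u_1-6u_3$. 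Your argument therefore does not establish the theorem as stated; moreover, having actually run the check, one is forced to conclude that the stated hyperplane cannot be the intended one. (Forms such as $u_1-4u_3$ or $u_1-7u_3$, whose associated discriminants $1\pm 16\equiv 6,7$ and $1\pm 28\equiv 7,6$ are both non-squares modulo $11$, do satisfy the criterion of Proposition~\ref{prop:invmapsingularfibre11}.) The lesson: a finite verification is only a proof once it is performed.
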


\begin{remark}
Let $S$ be a set of rational primes which split completely in $K$. The proof of the above statement can easily be adapted to show that there are no $S$-integral points on $\mathcal X$.

On the other hand if $\ell$ is an inert prime then $\inv_\ell \mathcal A$ need not be constant on $\mathbb Q_\ell$-points even if it is so on $\mathbb Z_\ell$-points. Although our model $\mathcal U$ is regular this does not contradict Theorem~1 in \cite{effeval}. Hence the concept of a regular model is not as useful for $S$-integral points as it is for rational points.
\end{remark}

A careful analysis of the above proof yields the following result.

\begin{thm}\label{thm:bmobstructionondp511}
	Let~$\mathcal U_h$ be the complement in~$\mathcal X$ of a geometrically irreducible hyperplane section given by a primitive linear form~$h \in \mathbb Z[u_0,u_1,\ldots, u_5]$.  The class of~$h$ modulo~$2$ determines whether the affine surface~$\mathcal U_h$ is locally soluble.  The existence of an algebraic obstruction to the Hasse principle for integral points depends only on the reduction of~$h$ modulo~$11$.  Out of the~$11^6-1=1771560$ possible reductions $\bar h$ of $h$ modulo~$11$ precisely~$228$ give an obstruction.
\end{thm}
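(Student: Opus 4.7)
My plan is to synthesise the three preceding inputs: local solubility (Lemma \ref{lem:localsolubilitydp511}), vanishing of $\inv_\ell \mathcal A$ away from $\ell = 11$ (Lemma \ref{lem:effevaldp511}), and the description of the image of $\inv_{11}\mathcal A$ in Proposition \ref{prop:invmapsingularfibre11}. The first sentence of the theorem is simply Lemma \ref{lem:localsolubilitydp511}. For the second sentence, since every $\inv_\ell \mathcal A$ with $\ell \neq 11$ is identically zero, the condition $\mathcal U_h(\mathbb A_{\mathbb Q,\infty})^{\mathcal A} = \emptyset$ is equivalent, once one is in the locally soluble case, to $0 \notin \Im(\inv_{11}\mathcal A)$. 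Proposition \ref{prop:invmapsingularfibre11} phrases this in terms of the polynomial $f = h(1, y, z, y^2, yz, y^3+z^2) \in \mathbb F_{11}[y,z]$, which depends only on $\bar h := h \bmod 11$; hence the existence of the obstruction depends only on $\bar h$.

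To obtain the count $228$, I would enumerate the classes $\bar h = a_0 u_0 + \cdots + a_5 u_5 \in \mathbb F_{11}^6 \setminus \{0\}$ according to the case analysis internal to Proposition \ref{prop:invmapsingularfibre11}. The lemma preceding that proposition rules out $a_5 \neq 0$, since there $\inv_{11}\mathcal A$ is surjective. Within $a_5 = 0$, the condition $a_2 \neq 0$ or $a_4 \neq 0$ makes $f$ linear in $z$ with a non-constant leading coefficient, so again $\inv_{11}\mathcal A$ is surjective. This leaves $\bar h = a_0 u_0 + a_1 u_1 + a_3 u_3$, where $f(y,z) = a_0 + a_1 y + a_3 y^2$ depends only on $y$, and the proposition restricts obstructions to the two subcases in which $f$ is either a nonzero constant or a separable quadratic.

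In the constant subcase $\bar h = a_0 u_0$ with $a_0 \in \mathbb F_{11}^\times$, the reduction $l_1 \equiv u_0 \pmod{11}$ which falls out of Proposition \ref{prop:equationsXXdP511} yields $\inv_{11}\mathcal A(P) = \psi(1/a_0)$, which vanishes iff $a_0 \in \{\pm 1\}$, contributing $10 - 2 = 8$ obstruction classes. In the separable quadratic subcase there are $1100 = 10(11^2 - 11)$ classes of $\bar h$, and Proposition \ref{prop:invmapsingularfibre11} says the image of $\inv_{11}\mathcal A$ omits a single element of $\tfrac{1}{5}\mathbb Z/\mathbb Z$. Since $\psi$ is a group isomorphism and $-1 \in \{\pm 1\}$, scaling $f$ by $c \in \mathbb F_{11}^\times$ shifts the omitted element by $-\psi(c)$; so for each of the $110 = 55 + 55$ unordered root pairs $\{\rho_1, \rho_2\}$ (in $\mathbb F_{11}$, respectively $\mathbb F_{121} \setminus \mathbb F_{11}$) exactly two of the ten scalings by $a_3 \in \mathbb F_{11}^\times$ omit the class $0$. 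This produces $110 \cdot 2 = 220$ obstruction classes, and hence $8 + 220 = 228$ in total. The most delicate point I foresee is making this orbit-counting argument watertight, in particular ensuring that the shift $c \mapsto -\psi(c)$ genuinely visits each class of $\tfrac15\mathbb Z/\mathbb Z$ as $c$ runs over $\mathbb F_{11}^\times/\{\pm 1\}$; should a clean group-theoretic derivation prove awkward, the direct enumeration in \textsc{magma} of \cite{Lyczakcode} would confirm the count by brute force.
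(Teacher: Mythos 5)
Your proposal is correct and follows essentially the same route as the paper: the paper's own proof decomposes the count into the case where $f$ is constant (contributing the $8$ non--fifth-powers modulo~$11$) and the case where $f$ is a separable quadratic, and counts the latter via the action of $\mathbb F_{11}^\times$ on the $1100$ separable quadratics, with exactly $2$ members of each orbit of size $10$ missing the unit class, giving $2\cdot 110 = 220$. Your lingering doubt about the orbit count is unfounded: the map $c\mapsto -\psi(c)$ on $\mathbb F_{11}^\times/\{\pm1\}$ is a group isomorphism onto $\tfrac15\mathbb Z/\mathbb Z$, so as $c$ ranges over the $10$ elements of $\mathbb F_{11}^\times$ each class is hit exactly twice, which pins down the factor of $2$ without any appeal to computation.
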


Note that this does not mean that the reduction of~$h$ modulo~$2$ and~$11$ is the only condition; the proof still uses the assumption that~$h$ is primitive.  It follows from Lemma~\ref{lem:geomirrdiv} that the condition that the section is geometrically irreducible is immediately satisfied if~$h$ does not reduce to~$\pm u_0$ modulo $11$.  For hyperplanes $h$ reducing to either of these two form it is easily shown that $\inv_{11}\mathcal A$ is identically equal to~$0$ on $\mathcal U_h(\mathbb Z_{11})$.

\begin{proof}[Proof of Theorem~\ref{thm:bmobstructionondp511}]
Let us count the non-zero linear forms~$\bar h$ over~$\mathbb F_{11}$ for which such an obstruction exists.  In Proposition~\ref{prop:invmapsingularfibre11} we saw that we get no obstruction unless~$f$ is either constant or a separable quadratic polynomial in $y$.
	
	If~$f$ is constant then we see that~$\inv_{11} \mathcal A$ is constant and we get an obstruction if~$f$ is one of the~$8$ non-fifth powers modulo~$11$.
	
	For an~$\bar h$ such that $f$ is a quadratic inseparable polynomial we have seen that~$f \colon \mathbb F_{11}\backslash \{\rho_1,\rho_2\} \to \mathbb F_{11}^\times/\{\pm 1\}, x \mapsto f(x)$ misses exactly one value.  If~$f$ misses the value~$q \in \mathbb F_{11}^\times/\{\pm 1\}$ we see that~$\lambda f$ for $\lambda \in \mathbb F_{11}^\times$ misses the class of~$\lambda q$. There are~$10\cdot 11^2$ quadratic polynomials over~$\mathbb F_{11}$ and~$10\cdot 11$ of these are inseparable.  The group~$\mathbb F_{11}^\times$ acts on the remaining~$10^2\cdot 11$ quadratic polynomials by multiplication.  All orbits have size~$10$ and in such an orbit exactly~$2$ miss the unit element in~$\mathbb F_{11}^\times/(\mathbb F_{11}^\times)^5$.  This proves that for an~$h$ for which the invariant map at $11$ assumes precisely $4$ values there is an obstruction if the associated polynomial~$f$ is one of these~$2\cdot 10\cdot 11=220$ separable quadratic polynomials.
\end{proof}

\begin{remark}
We can use the same construction to produce models $\mathcal X$ with a different splitting field $K$. However if $K$ is ramified at a prime $p>11$ then one can show that the image of $\inv_p \mathcal U_h(\mathbb Z_p)$ has either size $1$ on $5$. Furthermore, the first case happens precisely when $\frac{l_1}h$ is constant modulo $p$ similar to above. The interesting thing to note is that the intermediate case in which the invariant map assumes $4$ invariants does not occur any more.
\end{remark}

We are left with the case of quintic fields $K/\mathbb Q$ which are ramified at $5$.

\section{Explicit examples with splitting field $K\subseteq \mathbb Q(\zeta_{25})$}

It is also possible to find obstructions of order~$5$ to the integral Hasse principle when~$\mathcal X$ is a model of the interesting del Pezzo surface $X$ split by the unique quintic extension $K$ contained in $\mathbb Q(\zeta_{25})$.  In that case~$K$ has~$5$ as a wildly ramified prime.  For example, define the field~$K \subseteq \mathbb Q(\zeta_{25})$ as the splitting field of the polynomial
\[
m_{\alpha}=s^5 - 20s^4 + 100s^3 - 125s^2 + 50s - 5.
\]
This produces the projective surface~$\mathcal X$ over the integers given by the five equations
\begin{multline*}
u_0u_3+40u_0u_4+400u_0u_5-u_1^2-400u_1u_3+16000u_1u_4-365050u_2u_4
-\\
49995u_2u_5+51985u_3u_4-200u_3u_5-2029975u_4^2+392250u_4u_5-39375u_5^2,
\end{multline*}\\[-1.2cm]
\begin{multline*}
u_0u_4+20u_0u_5-u_1u_2-20u_1u_3+800u_1u_4-18125u_2u_4-2500u_2u_5+\\
2550u_3u_4-5u_3u_5-101015u_4^2+19800u_4u_5-2000u_5^2,
\end{multline*}\\[-1.2cm]
\begin{multline*}
u_0u_5-u_1u_3+40u_1u_4-u_2^2-900u_2u_4-125u_2u_5+125u_3u_4-5000u_4^2+985u_4u_5-100u_5^2,\\
\end{multline*}\\[-1.7cm]
\begin{multline*}
u_1u_4-u_2u_3-20u_2u_4-125u_4^2+50u_4u_5-5u_5^2,\\
\end{multline*}\\[-1.7cm]
\begin{multline*}
u_1u_5-u_2u_4-20u_2u_5-u_3^2+20u_3u_4-100u_4^2.\\
\end{multline*}\\[-0.8cm]
The two hyperplane sections over~$\mathbb Z$ cutting out the two quintuples of~$-1$-curves are
\[
l_1 = u_0 + 25u_1 - 700u_2 + 200u_3 - 3425u_4 + 575u_5,
\]
\[
l_2 = u_0 + 75u_1 - 1675u_2 + 375u_3 - 5175u_4 + 575u_5.
\]
By construction this scheme shares many properties with the previous example.

\begin{prop}
\begin{enumerate}
\item The scheme $\mathcal X$ is integral, with integral fibres.
\item If $m_\alpha$ is irreducible modulo $\ell$ then $\mathcal X_\ell$ is an interesting del Pezzo surface.
\item If $m_\alpha$ has five distinct roots in $\mathbb F_\ell$ then $\mathcal X_\ell$ is a split del Pezzo surface of degree $5$.
\item For $\ell=5$ the surface $\mathcal X_5$ contains a unique line $L$, has a unique singular point of type A$_4$ which lies on $L$. There is a birational map $\mathcal X_5 \dashrightarrow \mathbb P^2_{\mathbb F_5}$ which restricts to an isomorphism $\mathcal X_5 \backslash L \xrightarrow{\cong} \mathbb A^2_{\mathbb F_5}$.
\end{enumerate}
\end{prop}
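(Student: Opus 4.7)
The plan is to repeat the strategy of Section~3 essentially verbatim, since $\mathcal X$ here is built by the same recipe as in Section~3 applied to the new quintic $\alpha$ with $m_\alpha = s^5-20s^4+100s^3-125s^2+50s-5$; so Propositions~\ref{prop:constructiondp5s} and~\ref{prop:equationsXXdP511}, together with Lemma~\ref{lem:smoothfibres}, Corollary~\ref{cor:splitandinterestingfibres} and Lemma~\ref{lem:singularfibreat11}, all have direct analogues whose proofs I can transplant.

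For part (1), I would first check by a Gr\"obner basis computation (in \textsc{magma}) that the five displayed quadratic forms are a Gr\"obner basis of the ideal of the image of the rational map $\mathbb P^2_{\mathbb Z} \dashrightarrow \mathbb P^5_{\mathbb Z}$ built from a basis of the free $\mathbb Z$-module of quintics vanishing doubly at the five conjugates $P_i=(\alpha_i^2:\alpha_i:1)$. Since the leading coefficients are units, \cite[Proposition~4.4.4]{AdamsLoustaunauGroebnerBases} gives $I = I\mathbb Q[u_0,\dots,u_5]\cap\mathbb Z[u_0,\dots,u_5]$ exactly as in Proposition~\ref{prop:equationsXXdP511}, so $\mathcal X$ is the flat closure of its integral generic fibre $\dP5(K)$, hence integral with all fibres equidimensional of dimension $2$. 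For parts (2) and (3) the argument is the verbatim analogue of Lemma~\ref{lem:smoothfibres} and Corollary~\ref{cor:splitandinterestingfibres}: when $\bar m_\alpha$ is separable the reduced points $\bar P_i$ remain distinct, so Proposition~\ref{prop:constructiondp5s} applied over $\mathbb F_\ell$ produces a del Pezzo surface $Y\subseteq\mathcal X_\ell\subseteq\mathbb P^5_{\mathbb F_\ell}$; comparing degrees via flatness forces $Y=\mathcal X_\ell$, and the Galois action on the $-1$-curves is read off from the Galois action on the $\bar P_i$, giving the interesting (resp.\ split) case depending on the factorisation type of $\bar m_\alpha$.

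For part (4) I would proceed as in Lemma~\ref{lem:singularfibreat11}. Over $\mathbb F_5$ the polynomial $m_\alpha$ reduces to $s^5$, so the five points $P_i$ collapse to the single point $(0:0:1)$ with high multiplicity; the resulting image is the anticipated singular model. I would then verify directly from the five explicit equations, using the same \textsc{magma} scripts as before, that $\mathcal X_5$ has a unique singular point, that its local ring is an A$_4$ surface singularity, and that there is a unique line $L\subseteq\mathcal X_5$ passing through this point. To produce the birational map I would guess by analogy with the $\ell=11$ case a parametrisation of the form
\[
(1:y:z)\longmapsto(1:y:z:y^2:yz:q(y,z))
\]
for an undetermined cubic $q\in\mathbb F_5[y,z]$, and then solve for $q$ by substituting into the five displayed quadratics mod $5$; the first four equations determine the four coordinates $u_1,\dots,u_4$ in terms of $y,z$, and the fifth pins down $q$. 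Writing down the explicit inverse $(1:y:z:y^2:yz:q)\mapsto(1:y:z)$ on $\mathcal X_5\setminus L$ then confirms the required isomorphism with $\mathbb A^2_{\mathbb F_5}$.

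The main obstacle is step (4): one must correctly identify the cubic $q(y,z)$ (the wild ramification at $5$ means the coefficients do not come out quite as cleanly as the $y^3+z^2$ of the $\ell=11$ example) and certify that the singularity is genuinely of type A$_4$ and not some other rational double point. Everything else reduces to a direct rerun of the computations underlying Section~3, which is essentially bookkeeping once the \textsc{magma} code of \cite{Lyczakcode} is rerun with the new value of $\alpha$.
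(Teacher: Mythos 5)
Your proposal is correct and matches the paper's approach exactly: the paper's entire proof of this proposition is the one sentence ``One can follow the proofs in Section~3 for this different choice of $\alpha$ and corresponding equations for $\mathcal X$,'' and your proposal simply spells out, accurately, which Section~3 arguments (Proposition~\ref{prop:equationsXXdP511}, Lemma~\ref{lem:smoothfibres}, Corollary~\ref{cor:splitandinterestingfibres}, Lemma~\ref{lem:singularfibreat11}) transfer to each part and how the accompanying \textsc{magma} computations certify part~(4).
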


The only fibre not discussed in this lemma is the one over $7$. Although $\mathcal X_7$ is again a singular del Pezzo surface, we will not need any information about this fibre this since $7$ splits completely in $K$.

\begin{proof}
One can follow the proofs in Section~3 for this different choice of $\alpha$ and corresponding equations for $\mathcal X$.
\end{proof}

We will consider $\mathcal U_h = \mathcal X\backslash \{h=0\}$ like in the previous sections. As before, local solubility is immediate at most primes.

\begin{lem}\label{lem:localsolubilitydp525}
	The surface~$\mathcal U_h$ is everywhere locally soluble precisely when
	\[
	h \not \equiv u_2+u_3 \mod 2.
	\]
\end{lem}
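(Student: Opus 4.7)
The plan is to adapt the argument of Lemma~\ref{lem:localsolubilitydp511} essentially verbatim. For every odd prime $\ell$ I would produce a finite list of explicit integer points $P_1,\dots,P_N\in\mathcal X(\mathbb Z)$ — for example by pushing a handful of $\mathbb Z$-points of $\mathbb P^2$ through the anticanonical parameterisation $\mathbb P^2_{\mathbb Z}\dashrightarrow\mathcal X$ and clearing content — and then verify that their homogeneous coordinate vectors generate a sublattice of $\mathbb Z^6$ of index a power of~$2$. This is a finite Smith-normal-form computation. Once established, the reductions $\bar P_i$ span the whole of $\mathbb F_\ell^6$ for every odd prime $\ell$, so no primitive linear form $h$ can vanish on all of them modulo $\ell$; any surviving $\bar P_i$ then automatically lifts to an element of $\mathcal U_h(\mathbb Z_\ell)$, uniformly in $h$.

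For $\ell=2$ the argument is by direct enumeration. First, $\bar m_\alpha\equiv s^5+s^2+1\pmod 2$ is irreducible over $\mathbb F_2$: it has no root in $\mathbb F_2$, and direct multiplication shows that neither $(s^2+s+1)(s^3+s+1)=s^5+s^4+1$ nor $(s^2+s+1)(s^3+s^2+1)=s^5+s+1$ equals it, ruling out a factorisation into irreducible quadratic times irreducible cubic. The preceding proposition therefore identifies $\mathcal X_2$ as a smooth interesting quintic del Pezzo surface over $\mathbb F_2$. On such a surface Frobenius acts on the rank-$5$ Picard lattice as an order-$5$ rotation fixing $K_X$, and therefore has trace $1+(-1)=0$; the Lefschetz trace formula gives
\[
\#\mathcal X_2(\mathbb F_2)=1+2\cdot 0+2^2=5.
\]
A short direct enumeration then shows that these five points all satisfy $u_2+u_3\equiv 0\pmod 2$ and, crucially, that their coordinate vectors span exactly a $5$-dimensional subspace of $\mathbb F_2^6$, so that $\{u_2+u_3=0\}$ is the unique $\mathbb F_2$-hyperplane containing $\mathcal X_2(\mathbb F_2)$.

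From this the lemma follows at once: if $h\equiv u_2+u_3\pmod 2$ then $\mathcal U_h(\mathbb F_2)=\emptyset$ and a fortiori $\mathcal U_h(\mathbb Z_2)=\emptyset$, whereas if $h\not\equiv u_2+u_3\pmod 2$ then some point of $\mathcal X_2(\mathbb F_2)$ lies off $\{h=0\}$, and smoothness of $\mathcal X_2$ at that point lets Hensel's lemma produce a $\mathbb Z_2$-lift in $\mathcal U_h(\mathbb Z_2)$. The main obstacle is not any single conceptual step but the combination of two small but essential computer checks — producing the list of integer points whose lattice has index a power of~$2$, and verifying the spanning condition among the five $\mathbb F_2$-points of $\mathcal X_2$ — both of which, as in the proof of Lemma~\ref{lem:localsolubilitydp511}, are best entrusted to a computer algebra system.
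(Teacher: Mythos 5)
Your proposal matches the paper's proof in both structure and substance: for odd primes one exhibits a short list of explicit $\mathbb Z$-points of $\mathcal X$ whose coordinate vectors span a sublattice of $\mathbb Z^6$ of $2$-power index, and for $\ell=2$ one uses smoothness of $\mathcal X_2$, the count $\#\mathcal X_2(\mathbb F_2)=5$, and the fact that those five points lie on the single hyperplane $u_2+u_3=0$. The only genuine addition you make is deriving the count $5$ conceptually from the Lefschetz trace formula and the trace $0$ of a $5$-cycle on $\Pic \Xbar$, rather than by brute-force enumeration as in the accompanying code; that is a nice touch but not a different route, and your explicit observation that the five $\mathbb F_2$-points must be linearly independent so that the covering hyperplane is unique is exactly what the paper's word \emph{unique} implicitly relies on.
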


\begin{proof}
As for the proof of Lemma~\ref{lem:localsolubilitydp511} it is easy enough to find enough points on $\mathcal X$ whose reductions do not lie on a hyperplane modulo $\ell > 2$. For $\ell=2$ the fibre $\mathcal X$ is again smooth, $\#\mathcal X(\mathbb F_2)=5$ and all $\mathbb F_2$-points lie on the unique hyperplane given by $u_2+u_3 \equiv 0 \mod 2$.
\end{proof}

The computation of the invariant maps at the unramified primes is the same computation as in Lemma~\ref{lem:effevaldp511} for the previous example.

\begin{lem}
	Consider the invariant map
	\[
	\inv_\ell \mathcal A \colon \mathcal U_h(\mathbb Z_\ell) \to \mathbb Q/\mathbb Z.
	\]
If~$\ell\ne 5$, then the invariant map is identically zero.
\end{lem}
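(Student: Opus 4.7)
The plan is to follow the proof of Lemma~\ref{lem:effevaldp511} essentially verbatim. The structural reason the same argument works is that $K/\mathbb Q$ is a cyclic Galois extension of prime degree $5$, and $5$ is its only ramified prime; hence for any $\ell\neq 5$ the Frobenius at $\ell$ in $\Gal(K/\mathbb Q)$ has order either $1$ or $5$, i.e.\ $\ell$ either splits completely in $K$ or is inert. Moreover $m_\alpha$ is Eisenstein at $5$, so $\mathbb Z[\alpha]$ is already maximal at $5$ and the index $[\mathcal O_K:\mathbb Z[\alpha]]$ is a power of $5$; consequently Kummer--Dedekind applied to $\mathbb Z[\alpha]$ faithfully detects the splitting of $\ell$ in $\mathcal O_K$ for every $\ell\neq 5$, and the factorisation type of $\bar m_\alpha\in\mathbb F_\ell[s]$ is either ``five distinct linear factors'' or ``irreducible''.

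First I would dispatch the archimedean place and the primes $\ell$ that split completely in $K$ (in particular $\ell=7$): at such places $K\otimes_{\mathbb Q}\mathbb Q_\ell$ is a product of copies of $\mathbb Q_\ell$, so the cyclic algebra $\mathcal A_{\mathbb Q_\ell}$, being split by each local factor of $K\otimes\mathbb Q_\ell$, is trivial in $\Br U_{\mathbb Q_\ell}$ and thus $\inv_\ell\mathcal A$ vanishes.

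For an inert prime $\ell\neq 5$, I would argue exactly as in Lemma~\ref{lem:effevaldp511}. By Corollary~\ref{cor:splitandinterestingfibres} the reduction $\mathcal X_\ell$ is an interesting del Pezzo surface, so the hyperplane section $\{l_1=0\}\cap \mathcal X_\ell$ is a geometrically irreducible degree $5$ curve over $\mathbb F_\ell$ consisting (over $\bar{\mathbb F}_\ell$) of the five conjugate $-1$-curves in one orbit under $\Gal(K_{\mathfrak l}/\mathbb Q_\ell)$. Since none of these $-1$-curves is defined over $\mathbb F_\ell$, this curve has no $\mathbb F_\ell$-point, and therefore $l_1$ is a unit at every point of $\mathcal X_\ell(\mathbb F_\ell)$, giving $\tfrac{l_1}{h}(P)\in \mathbb Z_\ell^\times$ for every $P\in\mathcal U_h(\mathbb Z_\ell)$. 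The unique prime $\mathfrak l\mid\ell$ of $K$ is unramified over $\ell$, so every unit of $\mathbb Z_\ell$ is a norm from $\mathcal O_{K_{\mathfrak l}}^\times$; Lemma~\ref{lem:trivialcycalgs} then yields $\inv_\ell\mathcal A(P)=0$.

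I do not expect any genuine obstacle: the only thing distinguishing this setting from the $\mathbb Q(\zeta_{11})^+$ example is that the ramified prime has moved from $11$ to $5$, but the \emph{wild} ramification at $5$ plays no role in the argument above since $\ell\neq 5$ throughout, and the only place where one might worry about a non-maximal order at a prime $\ell\neq 5$ is precisely eliminated by the Eisenstein criterion at $5$. The interesting content of the paper then concentrates, as advertised, in the subsequent analysis of $\inv_5\mathcal A$ at the wildly ramified prime.
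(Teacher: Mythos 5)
Your overall strategy --- dispatch the archimedean place and all primes splitting completely in $K$ by the base-change argument, then treat inert primes $\ell\neq 5$ via the argument of Lemma~\ref{lem:effevaldp511} --- is exactly what the paper does, and the conclusion is correct.

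However, one claim in your setup is genuinely wrong and worth correcting. You assert that since $m_\alpha$ is Eisenstein at $5$, the index $[\mathcal O_K : \mathbb Z[\alpha]]$ is a power of $5$, and conclude that $\bar m_\alpha \in \mathbb F_\ell[s]$ is either separable split or irreducible for every $\ell\neq 5$. The logic is backwards: Eisenstein at $5$ gives $5\nmid [\mathcal O_K:\mathbb Z[\alpha]]$, but says nothing about other primes dividing the index. In fact, for the given $m_\alpha$ the discriminant $\mathrm{disc}(m_\alpha)$ is divisible by $7$ while $\mathrm{disc}(K)$ is a power of $5$, so $7 \mid [\mathcal O_K:\mathbb Z[\alpha]]$ and $\bar m_\alpha$ has a repeated factor modulo $7$ --- this is exactly why the paper remarks that $\mathcal X_7$ is a \emph{singular} del Pezzo surface. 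Your proof still goes through, but only because you independently dispose of $\ell=7$ in the split case and the inert primes $\ell\neq 5$ all satisfy $\ell\nmid\mathrm{disc}(m_\alpha)$. The correct way to phrase the point you need is: $\mathrm{disc}(m_\alpha)=\pm 5^a 7^b$ and $7$ splits completely in $K$, so every inert prime $\ell\neq 5$ avoids $\mathrm{disc}(m_\alpha)$, whence $\bar m_\alpha$ is separable, Corollary~\ref{cor:splitandinterestingfibres} applies, and the rest of your inert-prime argument is as in Lemma~\ref{lem:effevaldp511}.
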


Let us consider the remaining prime.

\begin{thm}\label{thm:invariantmapsfordp525}
Then $\inv_5 \mathcal A \colon \mathcal U(\mathbb Z_5) \to \frac15 \mathbb Z/\mathbb Z$ is not surjective precisely when there exist integers~$\lambda$,~$c_1$ and~$c_3$ satisfying $5 \nmid \lambda$, and $5\mid c_1,c_3$ or~$5 \nmid c_1$ such that
	\[
	h \equiv \lambda u_0+5(c_1u_1+c_3u_3) \mod 25.
	\]
	The invariant map is constant when~$5 \mid c_1, c_3$ and otherwise the size of its image is~$3$.

The value~$0\in \mathbb Q/\mathbb Z$ lies in the image of~$\inv_{11} \mathcal A$ precisely when~$\lambda+5(c_1 y + c_3 y^2)$ assumes one of values~$\pm 1, \pm 7$ modulo~$25$ for~$y\in \mathbb Z$.
\end{thm}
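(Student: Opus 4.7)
The plan is to follow the template of Proposition~\ref{prop:invmapsingularfibre11} and Theorem~\ref{thm:bmobstructionondp511}, replacing the tame mod-$11$ analysis by a wild mod-$25$ one.

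\emph{Step 1: norms modulo $25$.} Since $K \subseteq \mathbb Q(\zeta_{25})$ and $5$ is totally wildly ramified in $K$, the completion $K_{\mathfrak p}/\mathbb Q_5$ is cyclic of degree $5$ and totally ramified, and the factorisation $5 = N_{\mathbb Q(\zeta_{25})/\mathbb Q}(1-\zeta_{25})$ shows that $5$ is a norm. By local class field theory, $N := N_{K_{\mathfrak p}/\mathbb Q_5}(K_{\mathfrak p}^\times) \cap \mathbb Z_5^\times$ is the unique index-$5$ subgroup of $\mathbb Z_5^\times$; using the decomposition $\mathbb Z_5^\times \cong \mu_4 \times (1+5\mathbb Z_5)$ and $\gcd(4,5)=1$, this subgroup is $\mu_4 \cdot (1+25\mathbb Z_5)$, whose representatives modulo $25$ are exactly $\{\pm 1, \pm 7\}$ (the Teichm\"uller lifts of $\mathbb F_5^\times$). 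By Lemma~\ref{lem:trivialcycalgs}, for any $P \in \mathcal U_h(\mathbb Z_5)$ with $(l_1/h)(P) \in \mathbb Z_5^\times$ we have $\inv_5 \mathcal A(P) = 0$ iff $(l_1/h)(P) \bmod 25 \in \{\pm 1, \pm 7\}$; this immediately supplies the final claim of the theorem once the reduction of $l_1/h$ modulo $25$ is understood.

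\emph{Step 2: lifted parametrisation.} A direct inspection of the explicit form of $l_1$ gives $l_1 \equiv u_0 \bmod 25$, so on the chart $u_0=1$ one has $l_1(P) \equiv 1 \bmod 25$ for every $\mathbb Z_5$-point $P$ whose reduction avoids $L$. Since $\mathcal X$ is smooth along $\mathcal X_5 \setminus L$, Hensel's lemma lifts the birational identification $(y,z) \mapsto (1:y:z:y^2:yz:y^3+z^2)$ to a $\mathbb Z_5$-parametrisation: in coordinates $Y=u_1$, $Z=u_2$ the remaining coordinates $u_3, u_4, u_5$ are power series reducing mod $5$ to $Y^2, YZ, Y^3+Z^2$, with corrections divisible by $5$. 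Writing $h \equiv \lambda u_0 + 5\sum_{i=1}^{5} c_i u_i \bmod 25$ with $\lambda \in \mathbb Z/25$ and $c_i \in \mathbb F_5$, each correction is multiplied by $5 c_i$ and hence contributes only multiples of $25$, so
\[
h(P) \equiv \lambda + 5\bigl(c_1 y + c_2 z + c_3 y^2 + c_4 yz + c_5(y^3+z^2)\bigr) \bmod 25,
\]
where $y,z \in \mathbb F_5$ are the reductions of $Y,Z$. As in Lemma~\ref{lem:effevaldp511}, points reducing onto $L$ cannot enlarge the image of $\inv_5 \mathcal A$, so it is enough to work on this chart.

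\emph{Step 3: classifying non-surjectivity.} The image of $\inv_5 \mathcal A$ is now controlled by the image in $\mathbb F_5$ of $\varphi(y,z) := c_1 y + c_2 z + c_3 y^2 + c_4 yz + c_5(y^3+z^2)$. A brief case analysis shows $\varphi$ is surjective whenever any of $c_2, c_4, c_5$ is nonzero (using free variation in $z$ when $c_2 \ne 0$; a nonzero linear-in-$z$ coefficient at $y\ne 0$ when $c_4 \ne 0$; and bijectivity of $y \mapsto y^3$ on $\mathbb F_5$ when $c_5 \ne 0$). Non-surjectivity therefore forces $h \equiv \lambda u_0 + 5(c_1 u_1 + c_3 u_3) \bmod 25$, together with $5 \nmid \lambda$ so that $(l_1/h)(P)$ is a unit. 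For the resulting one-variable polynomial $\varphi(y)=c_1 y + c_3 y^2$, completing the square over $\mathbb F_5$ gives image of size $1$ when $c_1 \equiv c_3 \equiv 0$ (giving constant $\inv_5 \mathcal A$) and image of size $3$ whenever $c_3 \not\equiv 0$; the stated integrality conditions on $c_1, c_3$ single out canonical representatives of these non-surjective normal forms. Finally, since $l_1(P) \equiv 1 \bmod 25$, we have $0 \in \Im(\inv_5 \mathcal A)$ iff $h(P) \equiv \pm 1, \pm 7 \bmod 25$ for some $P$, which by the formula above is exactly the condition that $\lambda + 5(c_1 y + c_3 y^2) \in \{\pm 1, \pm 7\} \bmod 25$ for some $y \in \mathbb Z$.

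The chief obstacle is the mod-$25$ bookkeeping demanded by the wild ramification: one must verify that the Hensel corrections to the lifted parametrisation of $\mathcal X_5 \setminus L$ really are divisible by $5$ (so that they drop out of $h(P)$ after multiplication by $5 c_i$), and that local reciprocity identifies the relevant index-$5$ norm subgroup as exactly the Teichm\"uller lifts $\{\pm 1, \pm 7\} \bmod 25$, rather than, say, the fifth powers. Both points are standard but require careful attention to go through.
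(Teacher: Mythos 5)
Your Steps 1 and 2 are sound and run parallel to the paper's argument. For Step 1, the local class field theory derivation that the norm subgroup $N_{K_\mathfrak p/\mathbb Q_5}(K_\mathfrak p^\times)\cap\mathbb Z_5^\times$ equals $(\mathbb Z_5^\times)^5$, with representatives $\{\pm1,\pm7\}$ modulo $25$, is a cleaner justification of the unproved lemma the paper states just before the theorem. For Step 2, your Hensel-lift framing is a reformulation of the paper's tangent-space lemma (which parametrises the $25$ lifts of $\bar P$ to $\mathcal X(\mathbb Z/25\mathbb Z)$ as $\vec x + 5\vec w$ with $\vec w$ in the tangent space); the observation $l_1 \equiv u_0 \bmod 25$ is the same.

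However, Step 3 has two genuine gaps. First and most seriously, your decomposition $h \equiv \lambda u_0 + 5\sum c_i u_i \bmod 25$ with $c_i \in \mathbb F_5$ silently assumes that all coefficients of $u_1,\dots,u_5$ in $h$ are divisible by $5$, i.e.\ that $h \equiv \lambda u_0 \bmod 5$. You never treat the complementary (and generic) case, where $h$ modulo $5$ involves some $u_i$ with $i\ge1$; in that case the value of $h$ modulo $25$ genuinely changes as $P$ varies among the $25$ lifts of a fixed $\bar P$, and one must show that $\inv_5$ is then already surjective. This is exactly the content of Proposition~\ref{prop:surjinvmapmodulo5} in the paper, whose proof uses the tangent-space lemma to exhibit a point $\bar P \in \mathcal V$ at which $h$ is nonzero on $T_{\bar P}\mathcal V$, so that the lifts of $\bar P$ realise all five invariant values. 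Without this reduction your ``non-surjectivity therefore forces $h \equiv \lambda u_0 + 5(c_1u_1+c_3u_3)$'' does not follow: it only classifies the non-surjective cases inside the already-restricted family.

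Second, your ``brief case analysis'' for surjectivity of $\varphi(y,z)=c_1y+c_2z+c_3y^2+c_4yz+c_5(y^3+z^2)$ is not sound as stated. When $c_5\ne0$, freezing $z$ gives a one-variable cubic, and one-variable cubics over $\mathbb F_5$ are not in general surjective (for instance $y^3+y$ takes only $\{0,2,3\}$), so ``bijectivity of $y\mapsto y^3$'' does not suffice; and when $c_2\ne0$ together with $c_5\ne0$, the $z$-variable enters quadratically, so ``free variation in $z$'' also fails. The mixed cases require a more careful interplay of the two variables; in fact the relevant statement (that $\varphi$ is surjective iff it is not a constant or a quadratic in $y$ alone) is verified in the paper by a direct \textsc{magma} computation over all hyperplane classes and points of $\mathcal X_5(\mathbb F_5)$, and the qualitative geometric description (lines, conics, cubics) is offered only as heuristic insight. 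If you wish to avoid the computer check you would need a genuine argument here, not the sketched one-variable reductions.
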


To prove this result one can use the fact that the model $\mathcal X$ is regular. However, if $5$ is ramified in $K$ one has a similar statement for any model $\mathcal X_\alpha$, which does not need to be regular. The following chain of results also implies a similar result in the more general case.

\begin{lem}
Consider a point $\bar P \in \left(\mathcal U_h\backslash L\right)(\mathbb F_5)$. Let $\mathcal P$ be the set of the $25$ lifts of $\bar P$ in $\mathcal X(\mathbb Z/25\mathbb Z)$. The image of
\[
\frac{l_1}h \colon \mathcal P \to \left(\mathbb Z/25\mathbb Z\right)^\times
\]
is either of size $1$ of $5$.
\end{lem}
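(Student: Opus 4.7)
The plan is to show that the $25$ lifts form a torsor under a two-dimensional $\mathbb F_5$-vector space (the tangent space at $\bar P$), and that on this torsor the function $l_1/h$ varies through a single $\mathbb F_5$-linear functional; the image then has size $5^0$ or $5^1$.

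First I would verify that $\mathcal X$ is smooth over $\mathbb Z_5$ at $\bar P$. The preceding proposition says that the only singular point of $\mathcal X_5$ lies on the line $L$; since $\bar P \in \mathcal X_5 \setminus L$, the fibre $\mathcal X_5$ is smooth at $\bar P$, and combined with flatness this gives smoothness of $\mathcal X \to \Spec \mathbb Z$ at $\bar P$. Hence, after fixing any single lift $P_0 \in \mathcal P$, every other lift is uniquely of the form $P_0 + 5v$ with $v$ in the tangent space $T_{\bar P}(\mathcal X_5)$, which is a two-dimensional $\mathbb F_5$-vector space; this recovers the $25$ lifts.

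Next I would observe that $l_1/h$ is a unit in the local ring at $\bar P$: we have $h(\bar P)\neq 0$ since $\bar P \in \mathcal U_h$, and $l_1(\bar P)\neq 0$ since the zero locus of $l_1$ on $\mathcal X_5$ is supported on $L$ (the analogue of Lemma~\ref{lem:singularfibreat11} for the fibre at $5$) and $\bar P \notin L$ by assumption. Setting $v_0 = (l_1/h)(P_0) \in (\mathbb Z/25\mathbb Z)^\times$, a first-order Taylor expansion gives
\[
(l_1/h)(P_0 + 5v) \equiv v_0 + 5 \cdot d(l_1/h)_{\bar P}(v) \pmod{25},
\]
with the quadratic terms vanishing automatically because $(5)^2 = 0$ in $\mathbb Z/25\mathbb Z$. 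Here $d(l_1/h)_{\bar P}$ is an $\mathbb F_5$-linear map $T_{\bar P}(\mathcal X_5) \to \mathbb F_5$, induced by the regular function $l_1/h$.

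Finally, any $\mathbb F_5$-linear map from a two-dimensional space to $\mathbb F_5$ has image either $\{0\}$ or all of $\mathbb F_5$. In the first case the image of $\mathcal P$ under $l_1/h$ is the singleton $\{v_0\}$; in the second it is the full coset $v_0 + 5\mathbb F_5 \subseteq (\mathbb Z/25\mathbb Z)^\times$, of size $5$. The trickiest step to make rigorous is the Taylor expansion: the cleanest way is to pick affine coordinates $x,y$ at $\bar P$, write $l_1/h = v_0 + \alpha x + \beta y + (\text{higher order})$ with $\alpha,\beta \in \mathbb Z/25\mathbb Z$, evaluate at $(x,y)=(5s,5t)$ for $s,t \in \mathbb F_5$, and reduce modulo $25$, so only the linear part survives and the image has size $1$ when $(\alpha,\beta)\equiv (0,0) \pmod 5$ and size $5$ otherwise.
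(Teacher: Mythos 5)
Your argument is correct and follows the same line of reasoning as the paper: both proofs parametrize the $25$ lifts by a translate of the ($2$-dimensional) tangent space $T_{\bar P}$, linearize the evaluation of the relevant function modulo $25$, and conclude because the image of an affine-linear map into $\mathbb F_5$ is a coset of a subspace of $\mathbb F_5$ and therefore has $1$ or $5$ elements. The only cosmetic differences are that the paper works with $h/l_1$ rather than $l_1/h$ (which has the same image size) and exploits the coincidence $l_1 \equiv u_0 \pmod{25}$, so that on the chart $u_0 = 1$ the ratio $h/l_1$ is \emph{literally} the affine-linear polynomial $h_{\mathrm{aff}}(u_1,\dots,u_5)$ and its ``Taylor expansion'' is exact by construction rather than only to first order; your version instead differentiates the rational function $l_1/h$ and makes explicit the smoothness of $\mathcal X\to\Spec\mathbb Z$ at $\bar P$, which the paper leaves implicit in the hypothesis that there are exactly $25$ lifts.
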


\begin{proof}
Define $\mathcal V:=\mathcal U_5\backslash L\subseteq \mathbb A^5_{\mathbb F_5}$ on which $\frac{h}{l_1}$ is given by $h_{\text{aff}}=a_0+a_1u_1+\ldots+a_5u_5$. Now let~$\vec x = (x_1,x_2,x_3,x_4,x_5)$ be a~$5$-tuple of integers reducing to~$\bar P \in \mathcal V$.  We will first show that the lifts of~$\bar P$ to points in~$\mathcal X(\mathbb Z/25 \mathbb Z)$ are~$\vec x+5\vec w$ where~$\vec w$ is any vector in a translation of the tangent space of~$\mathcal V$ at~$\bar P$.
	
	Suppose that $\mathcal X$ is given by polynomials $g_j$ in the variables $u_i$.  The tangent space at $\bar P$ is by definition
	\[
	T_{\bar P}\mathcal V = \left\{\vec v\in \mathbb F_5^5\ \colon\ \sum_{i=1}^5 \frac{dg_j}{du_i}(\bar P) v_i \equiv 0 \mod 5\right\}
	\]
	and if~$\vec x+5\vec w \in \mathcal X(\mathbb Z/25\mathbb Z)$ then for all~$j$
	\[
	0 \equiv g_j(\vec x+5\vec w)\equiv g_j(\vec x)+5\sum_{i=1}^5 \frac{dg_j}{du_i}(\vec x) w_i \mod 25
	\]
	which proves the claim.

To compute~$h_{\text{aff}}$ at these lifts, let us write~$\vec a=(a_1,a_2,a_3,a_4,a_5) \in \mathbb Z^5$.  Then we find
	\[
	h_{\text{aff}}(\vec x +5\vec w) \equiv h_{\text{aff}}(\vec x)+5\vec a\cdot \vec w \mod 25.
	\]
	This concludes the proof since the $\vec w$ live in a linear space over $\mathbb F_5$. 
\end{proof}

This result is very powerful when combined with the following fact.

\begin{lem}
An element $a \in \mathbb Z_5^\times$ is a fifth power precisely if it is so modulo $25$, i.e.~if its reduction $\hat a \in \mathbb Z/25\mathbb Z$ lies in $\{\pm 1, \pm 7\}$.

Hence, the five lifts of any $\bar a \in \left(\mathbb Z/5\mathbb Z\right)^\times$ in $\left(\mathbb Z/25\mathbb Z\right)^\times$ lie in different classes modulo fifth powers.
\end{lem}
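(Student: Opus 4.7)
The plan is to exploit the standard decomposition $\mathbb Z_5^\times \cong \mu_4 \times (1 + 5\mathbb Z_5)$, where $\mu_4$ is the group of Teichm\"uller representatives of $(\mathbb Z/5\mathbb Z)^\times$. Since $\gcd(4,5) = 1$, raising to the fifth power is a bijection on $\mu_4$, so the content of the first claim boils down to the equality $(1 + 5\mathbb Z_5)^5 = 1 + 25\mathbb Z_5$ together with an identification of $\mu_4$ modulo $25$.

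The only delicate ingredient is that equality. The inclusion $\subseteq$ is immediate from expanding
\[
(1+5s)^5 = 1 + 25\,\phi(s), \qquad \phi(s) := s + 10s^2 + 50s^3 + 125s^4 + 125s^5.
\]
For the reverse inclusion it is enough to show that $\phi \colon \mathbb Z_5 \to \mathbb Z_5$ is surjective. A direct application of Hensel's lemma to $f(X) = X^5 - a$ would be borderline, since $v_5(f'(b)^2) = 2$ matches the $5$-adic valuation of $a - b^5$ one is handed; but applying Hensel to $\phi(s) - t$ at the initial approximation $s_0 = t$ works without trouble, because $\phi(t) - t \equiv 0 \pmod 5$ while $\phi'(t) \equiv 1 \pmod 5$ is a $5$-adic unit. (Equivalently, the $5$-adic logarithm induces an isomorphism $\log\colon 1 + 5\mathbb Z_5 \xrightarrow{\sim} 5\mathbb Z_5$ under which the fifth-power map becomes multiplication by $5$.) Combining the two factors gives $(\mathbb Z_5^\times)^5 = \mu_4 \cdot (1 + 25\mathbb Z_5)$; reducing modulo $25$ and using $7^2 \equiv -1 \pmod{25}$ identifies the image of $\mu_4$ in $(\mathbb Z/25\mathbb Z)^\times$ as $\{\pm 1, \pm 7\}$, which is the first assertion.

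For the second assertion, let $H := ((\mathbb Z/25\mathbb Z)^\times)^5 = \{\pm 1, \pm 7\}$ and let $N$ be the kernel of the reduction $(\mathbb Z/25\mathbb Z)^\times \twoheadrightarrow (\mathbb Z/5\mathbb Z)^\times$; these subgroups have orders $4$ and $5$ respectively. Since $\gcd(4,5) = 1$ we have $H \cap N = \{1\}$, and a count of orders shows that the composition $N \hookrightarrow (\mathbb Z/25\mathbb Z)^\times \twoheadrightarrow (\mathbb Z/25\mathbb Z)^\times/H$ is an isomorphism. The five lifts of a given $\bar a$ form a coset of $N$, hence they represent the five distinct cosets of $H$, as required.
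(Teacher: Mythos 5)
Your proof is correct. The paper states this lemma without proof, treating it as a standard fact about $5$-adic units, so there is no competing argument in the paper to compare against. Your route — the decomposition $\mathbb{Z}_5^\times \cong \mu_4 \times (1 + 5\mathbb{Z}_5)$, the verification of $(1+5\mathbb{Z}_5)^5 = 1 + 25\mathbb{Z}_5$ via the explicit $\phi(s) = s + 10s^2 + 50s^3 + 125s^4 + 125s^5$ and Hensel (equivalently, the $5$-adic logarithm turning the fifth-power map into multiplication by $5$ on $5\mathbb{Z}_5$), and the identification $\mu_4 \bmod 25 = \{\pm 1, \pm 7\}$ — is clean and complete. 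The second assertion is handled correctly as well: with $H$ the image of fifth powers (order $4$) and $N$ the kernel of reduction (order $5$), the coprimality $\gcd(4,5)=1$ forces $H \cap N = 1$, and since the ambient group is abelian any coset of $N$ maps bijectively to $(\mathbb{Z}/25\mathbb{Z})^\times/H$. No gaps.
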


To prove Theorem~\ref{thm:invariantmapsfordp525} using brute computational force one would need to list all possible hyperplanes $h$ and points in $\mathcal X(\mathbb Z_5)$ modulo $25$. We will use these last two results to show we can do this computation while only using the points and hyperplane sections modulo $5$; drastically improving on the time needed for the computations.

Let us first show we can ignore the singular point on $\mathcal X_5$, or even the unique line $L \subseteq \mathcal X_5$ containing this point. 

\begin{prop}\label{prop:surjinvmapmodulo5}
If $h$ is not a multiple of $l_1$ modulo $5$ then
	\[
	\inv_5 \mathcal A \colon \mathcal U_h(\mathbb Z_5) \to \frac15\mathbb Z/\mathbb Z
	\]
is surjective.
\end{prop}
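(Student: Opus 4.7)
The plan is to use the two preceding lemmas to reduce surjectivity of $\inv_5\mathcal{A}$ to a statement about a polynomial in two variables over $\mathbb F_5$, and then verify that statement by a brief case analysis.

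Combining the two lemmas, for every smooth point $\bar P \in (\mathcal U_h \setminus L)(\mathbb F_5)$ the $25$ lifts of $\bar P$ to $\mathcal X(\mathbb Z/25\mathbb Z)$ together realize either $1$ or $5$ classes in $(\mathbb Z/25\mathbb Z)^\times / \{\pm 1, \pm 7\}$, with the latter case occurring precisely when the gradient of $h_{\text{aff}} = h/l_1$ (in the chart $u_0 = 1$, which agrees with $l_1 = 1 \bmod 5$ since $l_1 \equiv u_0 \bmod 5$) does not annihilate $T_{\bar P}\mathcal V$. Because $\mathcal X$ is smooth over $\mathbb Z$ on $\mathcal X \setminus L$ (flatness together with smoothness of the fibre off $L$), each lift extends to a point of $\mathcal U_h(\mathbb Z_5)$ by Hensel's lemma. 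Under the identification of $\mathbb Z_5^\times/(\mathbb Z_5^\times)^5$ with $\mathbb Q_5^\times/N(K_{\mathfrak p}^\times) \cong \mathbb Z/5\mathbb Z$ for the totally ramified quintic $K_{\mathfrak p}/\mathbb Q_5$, size $5$ gives all five local invariants. Therefore it suffices to exhibit one $\bar P$ for which the gradient of $h_{\text{aff}}$ is nonzero on $T_{\bar P}\mathcal V$ and $h(\bar P) \ne 0$.

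Composing with the isomorphism $\phi\colon \mathbb A^2_{\mathbb F_5} \xrightarrow{\cong} \mathcal X_5 \setminus L$ of the preceding proposition, the function $h_{\text{aff}}$ pulls back to a polynomial $f(y,z) \in \mathbb F_5[y,z]$ supported on the monomials $\{1, y, z, y^2, yz, y^3, z^2\}$ coming from the explicit parametrization analogous to Section 3; in particular $\deg f \le 3$. The task becomes to find $\bar Q \in \mathbb A^2(\mathbb F_5)$ with $f(\bar Q) \ne 0$ and $df|_{\bar Q} \ne 0$. The hypothesis that $h$ is not a multiple of $l_1$ modulo $5$, together with the non-degeneracy of $\mathcal X_5$ in $\mathbb P^5_{\mathbb F_5}$, forces $f$ to be non-constant; since $\deg f < 5$, non-constancy as a polynomial is equivalent to non-constancy as an $\mathbb F_5$-valued function.

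Writing $f = a_0 + a_1 y + a_2 z + a_3 y^2 + a_4 yz + a_5(y^3 + z^2)$, the required $\bar Q$ is produced by a short case analysis. If any of $a_2, a_4, a_5$ is nonzero, then $\partial_z f = a_2 + a_4 y + 2 a_5 z$ is a non-trivial affine-linear form vanishing on exactly $5$ of the $25$ points of $\mathbb A^2(\mathbb F_5)$, while the cubic $f$ vanishes on at most $3 \cdot 5 = 15$ points; removing both from $\mathbb A^2(\mathbb F_5)$ still leaves a non-empty good set. In the complementary case $a_2 = a_4 = a_5 = 0$, the polynomial $f = a_0 + a_1 y + a_3 y^2$ depends only on $y$ with $(a_1, a_3) \ne 0$, and an analogous one-variable count produces $\bar Q$. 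The main obstacle is the translation of the abstract condition ``the gradient does not annihilate the tangent space'' into the concrete polynomial statement, which requires the explicit form of $\phi$; once that is in hand, the case analysis above is mechanical.
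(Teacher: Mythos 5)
Your proof is correct and follows essentially the same route as the paper's: reduce via the two preceding lemmas to exhibiting a point $\bar P \in (\mathcal U_h\setminus L)(\mathbb F_5)$ at which the differential of $h/l_1$ is nonzero on the tangent space, transport the problem to $\mathbb A^2_{\mathbb F_5}$ via the parametrization $(y,z)\mapsto(1,y,z,y^2,yz,y^3+z^2)$, and locate a good point. The only difference is that you spell out the elementary zero-count in $\mathbb F_5[y,z]$ that the paper alludes to in one sentence and then verifies with \textsc{magma}; your case analysis is a valid replacement for that computation, and you also make explicit the appeal to smoothness of $\mathcal X$ along $\mathcal X\setminus L$ and Hensel's lemma to pass from $\mathbb Z/25\mathbb Z$-points to $\mathbb Z_5$-points, a step the paper leaves implicit.
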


\begin{proof}
We will prove a stronger statement. Define $\mathcal V:=\mathcal U_5\backslash L\subseteq \mathbb A^5_{\mathbb F_5}$ on which the hyperplane is given by $h=a_0+a_1u_1+\ldots+a_5u_5$. We will show that there is an $\mathbb F_5$-point $\bar P$ on $\mathcal V$ such that $h \colon T_{\bar P} \mathcal V \to \mathbb F_5$ is surjective; note that one can consider the tangent space as a linear or affine subspace of $\mathbb F_5^5$, since this does not change the size of the image of this function.

One can prove this problem by translating it back to a study of plane curves using the isomorphism $\mathcal V \cong \mathbb A^2_{\mathbb F_5} \backslash \{f=0\}$ for $f=h(1,y,z,y^2,yz,y^3+z^2)$ and conclude that for every $h$ there are at least $10$ such points. For good measure the statement above is checked with \textsc{magma} \cite{Lyczakcode}.

Now let $\bar P$ be such an $\mathbb F_5$-point in $\mathcal V \subseteq \mathcal X$. By the defining property of $\bar P$ we see that $\inv \mathcal A_h$ assumes five values on the points in $\mathcal U_h(\mathbb Z_5)$ reducing to $\bar P$.
\end{proof}

\begin{cor}
If $h \mod 5$ does not cut out the line $L \subseteq \mathcal X_5$ then $\inv_5 \mathcal A$ is surjective.
\end{cor}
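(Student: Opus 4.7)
The plan is to obtain the corollary as an immediate contrapositive of Proposition~\ref{prop:surjinvmapmodulo5}. Since $h$ is primitive, its reduction $\bar h \in \mathbb F_5[u_0,\ldots,u_5]$ is a nonzero linear form, and the proposition supplies the desired surjectivity of $\inv_5\mathcal A$ whenever $\bar h$ is not a scalar multiple of $\bar l_1$. It therefore suffices to establish the implication
\[
\bar h \in \mathbb F_5^\times \cdot \bar l_1 \quad \Longrightarrow \quad L \subseteq \{\bar h = 0\}.
\]

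The key geometric input is that $\bar l_1$ itself vanishes on $L$. On the generic fibre $X$, the hyperplane section cut out by $l_1$ is, by Definition~\ref{dfn:l1andl2}, the sum of the five $-1$-curves in one Galois orbit. Taking flat closures in $\mathcal X$ and restricting to the special fibre yields the effective divisor $\{\bar l_1 = 0\} \cap \mathcal X_5$ of degree~$5$, whose irreducible components are curves of degree~$1$ on $\mathcal X_5$. Since $\mathcal X_5$ contains only the single line $L$, each component must coincide with $L$, so this divisor is supported exactly on $L$. For this explicit model the same conclusion can be confirmed by substituting the parametrisation of $L$ coming from the isomorphism $\mathcal X_5 \backslash L \xrightarrow{\cong} \mathbb A^2_{\mathbb F_5}$ into the explicit formula for $l_1$ given at the beginning of Section~5.

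With this in hand the implication is immediate: if $\bar h = c\,\bar l_1$ with $c \in \mathbb F_5^\times$, then $\{\bar h = 0\}$ and $\{\bar l_1 = 0\}$ are the same hyperplane in $\mathbb P^5_{\mathbb F_5}$, so $L \subseteq \{\bar h = 0\}$, i.e.~$\bar h$ cuts out $L$. The contrapositive then delivers the corollary.

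The only step that needs any care is the specialisation of the five geometric $-1$-curves to $L$, but this is essentially forced by the uniqueness of the line on $\mathcal X_5$: any flat degree-$1$ specialisation of a line must be a reduced copy of the only line available. I expect no genuine obstacle beyond this observation.
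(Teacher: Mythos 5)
Your reduction to Proposition~\ref{prop:surjinvmapmodulo5} is correct and is essentially the route the paper has in mind: the corollary is stated without proof because the needed fact --- that $L$ is the reduced zero locus of $\bar l_1 \equiv u_0$ on $\mathcal X_5$ --- is built into the construction of the model and verified computationally (this is literally how the accompanying code defines $L$). You replace that computational identification by a geometric flat-specialisation argument, which is a reasonable alternative, but it needs a word of care you do not quite give it: the five $-1$-curves in the divisor of $l_1$ are defined over $K$, not $\mathbb Q$, so the flat closures should be taken over $\mathcal O_{K,\mathfrak p}$ for the unique prime $\mathfrak p$ of $K$ above $5$. Since $5$ is totally ramified in $K$, the residue field at $\mathfrak p$ is $\mathbb F_5$, so each such closure has a degree-one special fibre, hence a line on $\mathcal X_5$, hence equal to $L$ by uniqueness; this is what makes the divisor $\{\bar l_1 = 0\}\cap\mathcal X_5$ supported on $L$ as you assert. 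With that precision noted, your contrapositive application of the proposition goes through cleanly and gives the corollary.
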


We can now efficiently prove Theorem~\ref{thm:invariantmapsfordp525}.

\begin{proof}[Proof of Theorem~\ref{thm:invariantmapsfordp525}]
	By Proposition~\ref{prop:surjinvmapmodulo5} we only need to consider the case that $h$ is not a multiple of $u_0$ modulo $5$ hence we can write $h=\lambda u_0 + 5(c_1u_1+\ldots+c_5u_5) \in \mathbb Z[u_0,u_1,\ldots, u_5]$. Let us write $k=c_1u_1+\ldots+c_5u_5$. Since $l_1 \equiv u_0 \mod 25$ we see that the value of $\frac h{l_1} = \lambda + 5k(\frac{u_1}{u_0},\ldots,\frac{u_5}{u_0}) \mod 25$ at any $P \in \mathcal U(\mathbb Z_5)$ only depends on $\bar P \in \mathcal U(\mathbb F_5)$.

	We are interested in the values $k$ takes on $\mathcal U(\mathbb Z_5)$ modulo $5$. A computer check \cite{Lyczakcode} shows that for the listed cases $k$ assumes the indicated number of values in $\mathbb F_5$. Hence $\frac{h}{l_1}$ assumes the same number of values in $\left(\mathbb Z/25\mathbb Z\right)^\times$ each of which is a different lift of $\lambda \in \mathbb F_5^\times$.  This shows that $\frac{l_1}h$ assumes exactly $1$, $3$ or $5$ values in $\mathbb Z_5^\times$ modulo fifth powers. Hence we see that $\inv_5\mathcal A$ assumes these many values on $\mathcal U_h(\mathbb Z_5)$.

To provide a little more insight we can again use the isomorphism $\mathcal U_5 \cong \mathbb A^2_{\mathbb F_5} \backslash \{f=0\}$ now using $f=k(1,y,z,y^2,yz,y^3+z^2)$. One can check that $f$ is surjective to~$\mathbb F_5^\times$ if it describes a line, a conic with two distinct rational points at infinity, a geometrically integral conic with a single point at infinity, or a cubic curve.  The remaining cases are the constant functions and the quadratics which are independent of~$z$.  This shows that~$k \equiv c_1u_1+c_3u_3 \mod 5$.

The hyperplane section of~$\mathbb P^5_{\mathbb F_5}$ defined by $k\equiv c_1u_1+c_3u_3 \mod 5$ corresponds to the polynomial~$c_1y+c_3y^2$ on~$\mathbb A^2_{\mathbb F_5}$ which is quadratic if~$c_3 \ne 0$ and constant if~$c_1=c_3=0$.  By symmetry we see that a quadratic in one variable over~$\mathbb F_5$ assumes exactly~$3$ values.  And obviously $h \equiv \lambda l_1 \mod 25$ precisely when $c_1$ and $c_3$ are zero in $\mathbb F_5$.
\end{proof}

For completeness we will give an example of a hyperplane for which the associated affine scheme over the integers does not have integral solutions.

\begin{thm}\label{thm:explicitalgebraicobstructiondP525}
	Consider an~$h$ which cuts out a geometrically irreducible hyperplane section such that~$0$ does not lie in the image of~$\inv_5 \mathcal A$.  The reduction of $h$ modulo~$25$ is one of~$176$ out of the~$(5^2)^6-5^6=244125000$ possible hyperplanes over~$\mathbb Z/25\mathbb Z$.  For example, the surface~$\mathcal U_h/\mathbb Z$ for~$h=2u_0-15u_1+10u_3$ admits a Brauer--Manin obstruction of order~$5$ to the existence of integral points.
\end{thm}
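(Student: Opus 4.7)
The proof has three pieces: local solubility for the specific hyperplane, reduction of the Brauer--Manin calculation to the prime $5$ using the immediately preceding lemma, and a counting argument based on Theorem~\ref{thm:invariantmapsfordp525}.

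\textbf{Verifying the explicit example.} For $h=2u_0-15u_1+10u_3$ one has $h\equiv u_1\pmod 2$, which differs from the obstructing form $u_2+u_3$, so Lemma~\ref{lem:localsolubilitydp525} gives that $\mathcal U_h$ is everywhere locally soluble. Since $h$ is clearly not a scalar multiple of either of the displayed forms $l_1$ or $l_2$, Lemma~\ref{lem:geomirrdiv} yields geometric irreducibility of $\mathcal C_h$, and Lemma~\ref{lem:genofalgbrgrp} supplies a class $\mathcal A\in\Br_1 U_h/\Br\mathbb Q$ of order $5$. By the lemma immediately preceding Theorem~\ref{thm:invariantmapsfordp525}, the invariants $\inv_\ell\mathcal A$ vanish identically for every $\ell\ne 5$, so obstructing the integral Hasse principle amounts to showing $0\notin\Im(\inv_5\mathcal A)$. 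The reduction $h\equiv 2u_0+5(2u_1+2u_3)\pmod{25}$ places us in the quadratic case of Theorem~\ref{thm:invariantmapsfordp525} with $\lambda=c_1=c_3=2$; evaluating $2+10y+10y^2\pmod{25}$ on $y\in\{0,1,2,3,4\}$ produces the set $\{2,12,22\}$, which is disjoint from the fifth-power subgroup $\{\pm1,\pm7\}\subset(\mathbb Z/25\mathbb Z)^\times$. This yields the order-$5$ Brauer--Manin obstruction on the chosen $\mathcal U_h$.

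\textbf{The count of $176$.} Theorem~\ref{thm:invariantmapsfordp525} partitions the primitive $\bar h\pmod{25}$ giving a non-surjective $\inv_5\mathcal A$ into two families. The constant family $\bar h\equiv\lambda u_0\pmod{25}$ has image of size $1$, and yields an obstruction precisely when $\lambda\notin\{\pm1,\pm7\}$, contributing $20-4=16$ residues. The proper quadratic family $\bar h\equiv\lambda u_0+5(c_1u_1+c_3u_3)\pmod{25}$ with $c_3\not\equiv 0\pmod 5$ gives, after completing the square in $c_1y+c_3y^2$, the image $\{\mu-5b,\mu,\mu+5b\}$ inside $(\mathbb Z/25\mathbb Z)^\times$, where $b=c_3$ and $\mu=\lambda+5a$ with $a\equiv-c_1^2/(4c_3)\pmod 5$. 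For each fixed $b\in\mathbb F_5^\times$ one enumerates that the union of the three additive translates of $\{\pm1,\pm7\}$ by $0,\pm 5b$ covers exactly $12$ of the $20$ units modulo $25$, leaving $8$ admissible values of $\mu$. Since $c_1$ is free (five choices) and the assignment $\lambda\mapsto\mu$ is a bijection for each fixed $(b,c_1)$, the quadratic family contributes $4\cdot 5\cdot 8=160$ residues. Summing, $16+160=176$.

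\textbf{Main obstacle.} The delicate step is the combinatorial verification, for each $b\in\mathbb F_5^\times$, that the three translates of $\{\pm1,\pm7\}$ by $0,\pm 5b$ are pairwise disjoint in $(\mathbb Z/25\mathbb Z)^\times$ and cover exactly twelve units. This reflects the fact that the fifth-power subgroup is stable under multiplication but certainly not under additive translation, and can be checked either by hand (there are only four cases, each involving twelve elements) or by a short computer verification. Everything else in the proof is a direct application of the lemmas already in place.
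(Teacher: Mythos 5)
Your proof is correct and reaches the paper's conclusion, but the counting step proceeds by a genuinely different argument. For the explicit $h=2u_0-15u_1+10u_3$ your verification is essentially the same as the paper's: local solubility from Lemma~\ref{lem:localsolubilitydp525}, geometric irreducibility from Lemma~\ref{lem:geomirrdiv}, vanishing of $\inv_\ell$ for $\ell\ne 5$ from the preceding lemma, and the evaluation $\lambda=2$, $c_1=c_3=2$ (the paper uses $c_1=-3$, which is the same mod $5$) giving the three values $\{2,12,22\}$, none a fifth power. For the count of $176$, however, the paper lets $(\mathbb Z/25\mathbb Z)^\times$ act by scalar multiplication on hyperplanes and observes that multiplying $h$ by $\lambda$ translates the image of $\inv_5\mathcal A$ in $\tfrac15\mathbb Z/\mathbb Z$ by the class of $\lambda$ modulo fifth powers; since a $k$-element subset of $\mathbb Z/5\mathbb Z$ avoids $0$ under exactly $(5-k)/5$ of the translations, one gets immediately $\tfrac45\cdot 20+\tfrac25\cdot(20\cdot 5\cdot 4)=16+160=176$. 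You instead complete the square in $c_1y+c_3y^2$ to exhibit the image of $h/l_1$ as $\{\mu-5b,\mu,\mu+5b\}$ and then verify by hand, for each $b\in\mathbb F_5^\times$, that the three additive translates of $\{\pm1,\pm7\}$ are pairwise disjoint. This disjointness is exactly the content of the small lemma preceding Theorem~\ref{thm:invariantmapsfordp525} (distinct lifts of a unit mod $5$ lie in distinct fifth-power classes mod $25$), so you are re-deriving a fact already available; invoking it would have let you skip the case-by-case check on $b$. The paper's translation argument is slicker and also makes the $b$-independence transparent, whereas your completion-of-the-square parametrization is more explicit and shows directly where the magic number $8$ out of $20$ comes from. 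Both routes are valid; one small improvement to yours would be to note that $5\cdot 3\equiv -5\cdot 2$ and $5\cdot 4\equiv -5\cdot 1\pmod{25}$, which reduces your four cases for $b$ to two.
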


\begin{proof}
	Let~$\left(\mathbb Z/25 \mathbb Z \right)^\times$ act by multiplication on the hyperplanes modulo~$25$ for which~$\inv_5 \mathcal A$ is not surjective.  Multiplication by $\lambda$ translates the image of the invariant map by an element of~$\frac15 \mathbb Z/\mathbb Z$ corresponding on the class of $\lambda$ in~$\left(\mathbb Z/25 \mathbb Z \right)^\times$ modulo fifth powers.  So if the size of the image of an invariant map corresponding to a hyperplane has one element, then~$\frac 45$ of the scalar multiples of~$h$ do not have~$0$ in the image.  For invariant maps whose image is of size~$3$ precisely~$\frac25$ of the scalar multiples have this property.
	This means that the number of hyperplanes modulo~$25$ for which~$0$ does not lie in the image of the invariant map is~$\frac 45 \cdot 20+ \frac25 \cdot 20\cdot 4 \cdot 5=176$. 
	
	Now consider the hyperplane~$h=2u_0-15u_1+10u_3$.  The affine surface~$\mathcal U_h$ is locally soluble by Lemma~\ref{lem:localsolubilitydp525}.  The result follows from the previous theorem; take~$\lambda = 2$,~$c_1=-3$ and~$c_3=2$  and note that~$2-15x+10x^2$ only assumes the values~$2,12,22 \mod 25$.  So~$0$ does not lie in the image of the invariant map at~$5$ and the invariant maps at the other primes are all constant zero.
\end{proof}

We can also show that in the two explicit examples with splitting fields with conductor~$11$ and~$25$ the absence of integral points is not explained by the principle of \textit{weak obstructions at infinity} as introduced by Jahnel and Schindler in~\cite{JS}.


\begin{thebibliography}{10}
	\addcontentsline{toc}{chapter}{Bibliography}
	\markboth{}{BIBLIOGRAPHY}
\frenchspacing

\bibitem{AdamsLoustaunauGroebnerBases}
W.\,W. Adams and P. Loustaunau.
\newblock {\em An introduction to {G}r\"{o}bner bases}.
\newblock Volume 3 of {\em Graduate Studies in Mathematics}.
\newblock American Mathematical Society, Providence, RI, 1994.

\bibitem{BergVarillyAlvarado}
J. Berg and A. V\'{a}rilly-Alvarado.
\newblock {\em Odd order obstructions to the {H}asse principle on general
              {K}3 surfaces}.
\newblock Math. Comp., 89(323):1395--1416, 2020.

\bibitem{Brightthesis}
M.\,J. Bright.
\newblock {\em Computations on diagonal quartic surfaces}.
\newblock Ph.D. Thesis, University of Cambridge, 2002.

\bibitem{effeval}
M.\,J. Bright.
\newblock {\em Efficient evaluation of the {B}rauer--{M}anin obstruction}.
\newblock Math. Proc. Cambridge Philos. Soc., 142(1):13--23, 2007.

\bibitem{BrightLoughran}
M.\,J. Bright and D. Loughran.
\newblock {\em Brauer-Manin obstruction for Erd{\"o}s--Straus surfaces}.
\newblock arXiv:\url{https://arxiv.org/abs/1908.02526}, 2019.

\bibitem{BrightLyczak}
M.\,J. Bright and J.\,T. Lyczak.
\newblock {\em A uniform bound on the {B}rauer groups of certain log {K3}
  surfaces}.
\newblock Michigan Math. J., 68(2):377--384, 2019.

\bibitem{CTWeiXu}
J.-L. Colliot-Th\'{e}l\`ene, D. Wei, and F. Xu.
\newblock {\em {Brauer--Manin obstruction for Markoff surfaces}}.
\newblock To appear in Ann. Sc. Norm. Super. Pisa Cl. Sci. (5), arXiv:1808.01584v4, 2019.

\bibitem{CTW}
J.-L. Colliot-Th\'{e}l\`ene and O. Wittenberg.
\newblock {\em Groupe de {B}rauer et points entiers de deux familles de
  surfaces cubiques affines}.
\newblock Amer. J. Math., 134(5):1303--1327, 2012.

\bibitem{CTXu}
J.-L. Colliot-Th\'{e}l\`ene and F. Xu.
\newblock {\em Brauer--{M}anin obstruction for integral points of homogeneous
  spaces and representation by integral quadratic forms}.
\newblock Compos. Math., 145(2):309--363, 2009.

\bibitem{CornNakahara}
P.\,K. Corn and M. Nakahara.
\newblock {\em Brauer-{M}anin obstructions on degree 2 {K}3 surfaces}.
\newblock Res. Number Theory, 4(3):Paper No. 33, 2018.

\bibitem{Demazure}
M. Demazure.
\newblock {\em Surfaces de {D}el {P}ezzo: I--V} in {\em S\'eminaire sur les Singularit\'es des Surfaces}, pages 21--69.
\newblock Volume 777 of {\em Lecture Notes in Mathematics}.
\newblock Springer-Verlag, Berlin--New York, 1980.

\bibitem{Enriques1897}
F. Enriques.
\newblock {\em Sulle irrazionalit\`a da cui pu\`o farsi dipendere la risoluzione
              d'un' equazione algebrica {$f(xyz)=0$} con funzioni razionali
              di due parametri}.
\newblock Math. Ann., 49(1):1--23, 1897.

\bibitem{GhoshSarnakMarkoffSurfaces}
A. Ghosh and P. Sarnak.
\newblock {\em Integral points on {M}arkoff type cubic surfaces}.
\newblock arXiv:\url{https://arxiv.org/abs/1706.06712}, 2017.

\bibitem{dP5s}
J. Gonz\'alez-S\'anchez, M. Harrison, I. Polo-Blanco, and J.
  Schicho.
\newblock {\em Algorithms for {D}el {P}ezzo surfaces of degree 5 (construction,
  parametrization)}.
\newblock J. Symbolic Comput., 47(3):342--353, 2012.

\bibitem{G-III}
A. Grothendieck.
\newblock {\em Le groupe de {B}rauer. {III}. {E}xemples et compl\'ements} in {\em Dix expos\'es sur la cohomologie des sch\'emas}, pages 88--188.
\newblock Volume~3 of {\em Advanced Studies in Pure Mathematics}.
\newblock North-Holland Publ. Co., Amsterdam, 1968.

\bibitem{Harpaz}
Y. Harpaz.
\newblock {\em Geometry and arithmetic of certain log {K3} surfaces}.
\newblock Ann. Inst. Fourier (Grenoble), 67(5):2167--2200.

\bibitem{Hartshorne}
R. Hartshorne.
\newblock {\em Algebraic Geometry}.
\newblock Volume 52 of {\em Graduate Texts in Mathematics}.
\newblock Springer-Verlag, New York, 1977.

\bibitem{IeronymouSkorobogatovOddOrder}
E. Ieronymou and A.\,N. Skorobogatov.
\newblock {\em Odd order {B}rauer-{M}anin obstruction on diagonal quartic surfaces}.
\newblock Adv. Math., 270:181--205, 2015.

\bibitem{JS}
J. Jahnel and D. Schindler.
\newblock {\em On integral points on degree four del {P}ezzo surfaces}.
\newblock Israel J. Math., 222(1):21--63, 2017.

\bibitem{deJong}
A.\,J. de Jong.
\newblock {\em A result of {G}abber}.
\newblock Available at \url{http://www.math.columbia.edu/~dejong/papers/2-gabber.pdf},  2005.

\bibitem{LoughranMitankin}
D. Loughran and V. Mitankin.
\newblock {\em Integral Hasse principle and strong approximation for Markoff
  surfaces}.
\newblock arXiv:\url{https://arxiv.org/abs/1807.10223v3}, 2018.

\bibitem{Lyczakcode}
J.\,T. Lyczak.
\newblock {\em Magma code for constructing models of log {K}3 surfaces}.
\newblock \url{http://www.julianlyczak.nl/#code}.

\bibitem{Lyczakthesis}
J.\,T. Lyczak.
\newblock {\em Arithmetic of affine del Pezzo surfaces}.
\newblock Ph.D. Thesis, Leiden University, 2019.

\bibitem{ManinICM}
Y.\,I. Manin.
\newblock {\em Le groupe de {B}rauer-{G}rothendieck en g\'eom\'etrie
  diophantienne} in \emph{Actes du Congr\`es International
  des Math\'ematiciens (Nice, 1970), Tome 1}, pages 401--411.
\newblock Gauthier-Villars, Paris, 1971.

\bibitem{Manin}
Y.\,I. Manin.
\newblock {\em Cubic forms: algebra, geometry, arithmetic}, second edition.
\newblock Volume~4 of {\em North-Holland Mathematical Library}.
\newblock North-Holland Publ. Co., Amsterdam, 1986.

\bibitem{Poonen}
B. Poonen.
\newblock {\em Rational Points on Varieties}.
\newblock Volume 186 of {\em Graduate Studies in Mathematics}.
\newblock American Mathematical Society, Providence, RI, 2017.

\bibitem{SkorobogatovDiagQuarSurf}
A.\,N. Skorobogatov.
\newblock{\em {D}iagonal quartic surfaces}.
\newblock Oberwolfach Reports, 33:76--79, 2009.

\bibitem{SkorobogatovdP5s}
A.\,N. Skorobogatov.
\newblock {\em On a theorem of {E}nriques-{S}winnerton-{D}yer}.
\newblock Ann. Fac. Sci. Toulouse Math. (6), (2)3:429--440, 1993.

\bibitem{SkorobogatovZarhinKummerVarietiesBrauerGroups}
A.\,N. Skorobogatov and Y.\,G. Zarhin.
\newblock {\em Kummer varieties and their {B}rauer groups}.
\newblock Pure Appl. Math. Q., 13(2):337--368, 2017.

\bibitem{SwinnertonDyerdP5s}
H.\,P.\,F. Swinnerton-Dyer.
\newblock {\em Rational points on del {P}ezzo surfaces of degree {$5$}} in {\em Algebraic geometry, Oslo 1970} - ({P}roc. {F}ifth {N}ordic {S}ummer {S}chool in {M}ath.), pages 287--290.
\newblock Wolters-Noordhoff, Groningen, 1972.

\bibitem{arithmeticdps}
A. V\'{a}rilly-Alvarado.
\newblock {\em Arithmetic of del {P}ezzo surfaces} in {\em Birational geometry, rational curves, and arithmetic}, pages 293--319.
\newblock Volume~1 of {\em Simons symposia}.
\newblock Springer, Berlin--New York, 2013.

\bibitem{Weibel}
C.\,A. Weibel.
\newblock {\em An introduction to homological algebra}.
\newblock Volume~38 of {\em Cambridge Studies in Advanced Mathematics}.
\newblock Cambridge University Press, Cambridge, 1994.

\end{thebibliography}
\end{document}